\newtheorem{thm}{Theorem}[section]
\theoremstyle{definition}
\newtheorem{defn}[thm]{Definition}
\newtheorem{lem}[thm]{Lemma}
\newtheorem{tm}[thm]{Theorem}
\newtheorem{rem}[thm]{Remark}
\theoremstyle{remark}
\newtheorem*{notn}{Notation}
\renewcommand{\P}{\mathbb{P}}
\title{Packing Directed Hamilton Cycles Online}
\author{ Michael Anastos\footnote{email:manastos@andrew.cmu.edu} and Joseph Briggs\footnote{email:jbriggs@andrew.cmu.edu} \vspace{5mm}
\\Department of Mathematical Sciences,
\\Carnegie Mellon University,
\\Pittsburgh PA 15213.
\date{}}
\begin{document}

\maketitle

\begin{abstract}
    
    Consider a directed analogue of the random graph process on $n$ vertices, where the $n(n-1)$ edges are ordered uniformly at random and revealed one at a time.
    It is known that w.h.p.\@ the first digraph in this process with both in-degree and out-degree $\geq q$ has a $q$-edge-coloring with a Hamilton cycle in each color. We show that this coloring can be constructed online, where each edge must be irrevocably colored as soon as it appears.
In a similar fashion, for the \emph{undirected} random graph process, we present an online $n$-edge-coloring algorithm which yields w.h.p.\@ $q$ disjoint rainbow Hamilton cycles in the first graph containing $q$ disjoint Hamilton cycles.
\end{abstract}
\section{Introduction}
  Let $\vec{K}_n$ be the complete directed graph on $n$ vertices. We let $(e_1,e_2,...,e_{n(n-1)})$  be a uniformly random permutation of the edges of $\vec{K}_n$ and consider the random process of digraphs $D_1,D_2,...,D_{n(n-1)}$ defined by $D_m=(V_n,E_m)$ with  $E_m=(e_1,...,e_m)$ for $m \in [n(n-1)].$ This is a directed analogue of the celebrated Erd\H{o}s-R\'{e}nyi random graph process \cite{ER}, in which the edges of the \emph{undirected} complete graph $K_n$ are ordered uniformly at random, similarly yielding a random process of graphs $ G_1, G_2, \dots, G_{n(n-1)/2}=K_n$.
  Graph-theoretic properties of $D_m$ and $G_m$ are said to hold ``with high probability'' (w.h.p.\@) if they occur with probability $1-o(1)$ as $n \rightarrow \infty$, where $m$ is allowed to be a random variable depending on $n$.

A \emph{Hamilton cycle} is a (directed) cycle passing through all $n$ vertices exactly once. When a graph or digraph contains such a cycle, we say it is \emph{Hamiltonian}.
 The study of Hamilton cycles is fundamental to graph theory, including in the random setting.
For a digraph to contain a Hamilton cycle it certainly requires each vertex to have at least 1 in-edge and 1 out-edge, but quite remarkably, this is almost always sufficient for the random graphs $D_m$.
Specifically,    for a fixed $q$, let $D_{\tau_q}$ denote the first digraph in this random process with 
both minimum in-degree and out-degree $\geq q$.
In \cite{apaper}, Frieze showed that w.h.p.\@  $D_{\tau_1}$ is Hamiltonian
yielding a hitting-time strengthening of McDiarmid \cite{dirp} and a directed version of the classical result due to Bollob\'{a}s \cite{Evolutionhit2} and Ajtai, Koml\'{o}s and Szemer\'{e}di \cite{hit1}. The latter two papers independently proved that w.h.p.\@ the first $G_{m}$ in the undirected random graph process with minimum degree $\delta(G_m) \geq 2$ is Hamiltonian, thus bringing to fruition the work built up by Koml\'{o}s and Szemer\'{e}di \cite{Komlos}, Korshunov \cite{earlier1} and P\'{o}sa \cite{earlier2} previously.

The undirected version was strengthened  \cite{hitconstant} by Frieze and Bollob\'{a}s to additional Hamilton cycles thus: let $q=O(1)$ be fixed.
Let $G_{\tau_{2q}'}$ be the first random graph in the undirected process with $\delta(G_{\tau_{2q}'}) =2q$ ( here $\tau_q$ and $\tau_q'$ distinguish the directed and undirected hitting times respectively).
Then w.h.p.\@ $G_{\tau_{2q}'}$ has a $q$-edge-coloring with a Hamilton cycle in every color.
In fact, results for $q \rightarrow \infty$ with $n \rightarrow \infty$ have been established in all cases thanks to extensive work completed by Knox, K\"{u}hn and Osthus \cite{manyhit} and Krivelevich and Samotij \cite{manyhit2}.

In these papers, it appeared that the minimum degree conditions were still the most binding aspects of the proofs, suggesting stronger results could be obtained if corresponding minimum degree conditions are met.
Indeed, Krivelevich, Lubetzky and Sudakov \cite{Achlioptas} took advantage of the Achlioptas process with parameter $K=o(\log n)$ to build a Hamilton cycle using w.h.p.\@ only $(1+o(1))\frac{\tau_2'}{K}$ edges. In this process, at each time step, $K$ random new edges are presented, out of which one is added to the current graph, thereby allowing a bias towards low-degree vertices when necessary.
 In a similar fashion,
  Briggs, Frieze, Krivelevich, Loh and Sudakov \cite{pp} extended the classical result to an on-line version. 
  They presented an algorithm coloring the edges $(e_1,e_2,...,e_{{n(n-1)}/{2}})$ as they appeared, with $q=O(1)$ colors, such that w.h.p.\@ $G_{\tau_{2q}'}$ contains a monochromatic Hamilton cycle of every color.
  The on-line nature of this coloring is of importance, because the color of each new random edge $e_m$ \emph{cannot depend on the location of the edges appearing thereafter}.

In this paper we consider the analogous scenario in the directed random graph process. Here, the edges of the random permutation $(e_1,e_2,...,$ $e_{n(n-1)})$ of $\vec{K}_n$ are revealed one by one. As soon as an edge is revealed it has to be colored irrevocably with one of $q=O(1)$ colors.
We prove the following:

\begin{tm}\label{maintm}
There exists an on-line $[q]$-edge-coloring algorithm for $D_1, \dots, D_{n(n-1)}$ such that {w.h.p.\@}
 $D_{\tau_q}$ has  $q$ monochromatic Hamilton cycles, one in every color in $[q]$.
\end{tm}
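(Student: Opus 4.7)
Following the approach of \cite{pp} for the undirected case, the plan is to design an online coloring that w.h.p.\ produces $q$ monochromatic sub-digraphs $D^1_{\tau_q}, \dots, D^q_{\tau_q}$, each with minimum in- and out-degree at least $1$, and each sufficiently ``random-like'' that Frieze's hitting-time theorem \cite{apaper} (or a robust extension of its proof) can be applied color-by-color. I would split the process at a threshold $m_0$ slightly below the typical value of $\tau_q$, chosen so that w.h.p.\ only a small set of vertices have already attained in-degree or out-degree $q$ by time $m_0$. In the \emph{uniform phase} $m \leq m_0$ each edge is colored uniformly at random from $[q]$, independently; in the \emph{corrective phase} $m > m_0$, for each new edge $(u,v)$ the algorithm prefers colors $c$ in which $u$ has current out-degree $0$ or $v$ has current in-degree $0$, thus filling deficient color-slots at either endpoint whenever possible, and falls back to uniform random otherwise.

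\textbf{Key analysis steps.} After Phase 1, standard Chernoff-plus-union-bound arguments show that w.h.p.\ each color class has per-vertex in- and out-degree concentrated around $(\log n)/q$, so the set $\mathcal{L}$ of \emph{deficient} vertices (missing an in- or out-edge of some color) has size $o(n)$. Phase 2 is then long enough to eliminate these deficiencies: for each $v \in \mathcal{L}$, enough further in- and out-edges at $v$ arrive before time $\tau_q$ (since $v$ itself still needs to reach total in- and out-degree $q$), and the corrective rule greedily assigns these to $v$'s missing colors before any non-deficient color is selected. A union bound over $v \in \mathcal{L}$ and over colors $c \in [q]$ shows that w.h.p.\ every $D^c_{\tau_q}$ has minimum in- and out-degree $\geq 1$. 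For Hamiltonicity per color, the restriction of Phase 1 to color $c$ is distributed as a random subsequence of $\vec{K}_n$ in which each edge appears independently with probability $1/q$, hence, modulo the Phase 2 contribution, resembles a random directed graph of density $\approx (\log n)/(nq)$ --- comfortably above the directed Hamiltonicity threshold. Adapting the rotation-based argument of \cite{apaper} applied independently in each color class should then yield the desired $q$ Hamilton cycles.

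\textbf{Main obstacle.} The principal difficulty is that the corrective rule in Phase 2 biases the distribution of each color class away from that of a uniformly random sub-digraph on its edge count, introducing correlations between the colors and dependencies on the history of the process. One must show that the Phase 2 edges are few and well-controlled enough that their inclusion only enhances (rather than degrades) the expansion / Pósa-rotation properties needed to close Hamilton cycles in each color. This requires carefully bounding $|\mathcal{L}|$ and the number of forced-color edges, handling the correlations between corrective decisions at different vertices, and verifying that Frieze's rotation/extension argument in \cite{apaper} is sufficiently robust to accept a random ``Phase 1 skeleton'' augmented by a small, adversarially-placed set of Phase 2 edges in each color.
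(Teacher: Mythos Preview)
Your minimum-degree argument has a concrete gap. Take any vertex $v$ with $d^+_{\tau_q}(v)=q$ exactly (there is always at least one vertex of in- or out-degree exactly $q$ at the hitting time, and w.h.p.\ there are $\omega(1)$ vertices of out-degree $q+O(1)$). Whatever $m_0$ you pick, some number $j$ of $v$'s $q$ out-arcs land in Phase~1 and receive independent uniform colors; whenever $j\geq 2$ there is probability $\Omega_q(1)$ that these $j$ arcs use fewer than $j$ distinct colors, leaving strictly more than $q-j$ colors missing at $v$. The remaining $q-j$ out-arcs at $v$ in Phase~2 --- even granting your corrective rule full priority to $v$ --- can fill at most $q-j$ of them, so $v$ fails with constant probability. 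Your sentence ``since $v$ itself still needs to reach total in- and out-degree $q$'' conflates \emph{color}-deficiency with \emph{degree}-deficiency and does not produce any extra arcs at $v$. This is exactly why the paper colors greedily from time $1$ rather than uniformly: an arc $uv$ with $u\notin FULL^+$ or $v\notin FULL^-$ is always drawn from the missing colors $C_u^+\cup C_v^-$, so a degree-$q$ vertex never wastes an out-arc on a repeated color unless the head $v$ competes for a different missing in-color. Showing that this competition is almost always won by $u$ is the content of Section~4, and requires a nontrivial coupling argument tracking the in-arcs of each out-neighbor.

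There is a second, structural gap in your Hamiltonicity step. Frieze's argument in \cite{apaper} is not a black box applicable to any ``random-like'' digraph above the density threshold: it first constructs a $1$-factor with $O(\log n)$ cycles and then merges those cycles using arcs that are genuinely independent of the $1$-factor. The paper engineers this decomposition into the coloring algorithm itself: during two intermediate windows $(m_1,m_2]$ and $(m_2,m_3]$ it colors \emph{cyclically} at each FULL vertex, guaranteeing every non-pathological vertex at least six out-arcs and six in-arcs of each color, from which a $1$-factor of few cycles is extracted (Section~8); a small set $BAD$ of exceptional vertices is then contracted away, and the late arcs in $E'$ --- colored uniformly and provably independent of everything so far --- supply a fresh $D_{n,p}$ to drive the merging via Lemma~\ref{reduction}. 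Your scheme has no mechanism to isolate, within each color class, a $1$-factor and a disjoint fresh random edge set, and the correlations you flag in your ``main obstacle'' are not a technicality to be absorbed but precisely the issue the paper's multi-phase design is built to avoid.
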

In order to prove Theorem \ref{maintm} we present a coloring algorithm which we name $COL$. Thereafter we split the proof into two parts. In the first part we prove that each color class $c$ of $D_{\tau_q}$ given by $COL$ satisfies the minimum degree condition necessary for Hamiltonicity.
In the second part (drawing our proof strategy from \cite{apaper})
we fix $c\in [q]$ and show w.h.p.\@  $D_{\tau_q}$ has a monochromatic Hamilton cycle in color $c$. To do so we end up giving a reduction to the following more general Lemma.
\begin{lem}\label{reduction}
Let $F,H,D_{n,p}$ be digraphs on the same vertex set of size $n$ such that:
\begin{enumerate}[i)]
\vspace{-2mm}
\item $F$ is a 1-factor consisting of $O(\log n)$ directed cycles, 
\vspace{-2mm}
\item $H$ has maximum in/out-degree $O(\log n)$,
\vspace{-2mm}
\item $D_{n,p}$ is a random graph where every edge appears independently with probability $p=\Omega(\frac{\log n}{n})$.
\end{enumerate}
Then w.h.p.\@ there is  a Hamilton cycle spanned by $E(F)\cup \big(E(D_{n,p})\setminus E(H)\big)$.  
\end{lem}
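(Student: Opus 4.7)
The plan is to modify $F$ into a Hamilton cycle using edges of $D_{n,p}\setminus H$. The basic merge operation is the \emph{2-swap}: for two cycles $C, C'$ of the current 1-factor containing $F$-edges $(u,v) \in C$ and $(x,y) \in C'$, if $(u,y), (x,v) \in D_{n,p}\setminus H$, the 2-swap deletes $(u,v),(x,y)$ and inserts $(u,y),(x,v)$, merging $C$ and $C'$ into one cycle. Performing $k-1 = O(\log n)$ successful merges then yields the desired Hamilton cycle.

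I would first split $D_{n,p}$ by sprinkling into two independent layers $D^{(1)}, D^{(2)}$, each of density $\Omega(\log n/n)$, so the analysis proceeds in two stages using fresh randomness. For two cycles of sizes $s,s'$, the expected number of valid 2-swap pairs inside one layer is $\sim s s' p^2$ (after accounting for the at most $O(n\log n)$ potential losses to $H$). When both cycles have size at least $n/\sqrt{\log n}$, this expectation is $\omega(1)$, and a second-moment argument gives at least one usable swap w.h.p. A union bound over the $O(\log n)$ such merges handles this ``large-cycle'' regime.

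The main obstacle is that $F$ may contain cycles of size as small as $2$, for which $s s' p^2 = o(1)$ and direct 2-swaps are no longer reliable. To handle these, the plan is a preprocessing stage using $D^{(1)}\setminus H$: for each small cycle $C$ of $F$ and the largest cycle $L$ (of size $|L|\geq n/k = \Omega(n/\log n)$), I would use a generalized $r$-swap with $r=\Theta(\log n)$, removing $r-1$ edges from $L$ and one from $C$ to create $r$ disjoint paths, then using $r$ fresh edges of $D^{(1)}\setminus H$ to reconnect these paths into a single cycle. The number of valid configurations has expectation $\gtrsim |L|^{r-1}|C|p^{r} = |C|\cdot(|L|p)^{r-1}\cdot p$, which for $r=\Theta(\log n)$ and large enough implicit constant in $p=\Omega(\log n/n)$ is super-polynomial in $n$, since then $|L|p = \Omega(\log n)$ and thus $(|L|p)^{r-1}$ dominates the $1/n$ factor. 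Concentration then yields w.h.p. success for each absorption, and a union bound over the at most $O(\log n)$ small cycles completes the preprocessing.

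After this preprocessing, every remaining cycle has size $\Omega(n/\sqrt{\log n})$, and the second layer $D^{(2)}\setminus H$ finishes the merging via ordinary 2-swaps as in the large-cycle regime above. The hardest part will be executing the generalized $r$-swaps sequentially while preserving enough fresh randomness at each step for concentration to hold; the two-layer sprinkling, and possibly finer sub-sampling within $D^{(1)}$ dedicated to each individual absorption, is essential for this. A secondary technical point is verifying that exposing small amounts of $D^{(1)}$ does not destroy the natural independence of the remaining edges from the current 1-factor structure, which the sprinkling framework is specifically designed to handle.
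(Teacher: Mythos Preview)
Your plan has the two phases in the wrong order, and this causes a concrete arithmetic error. You absorb small cycles \emph{first}, when the largest cycle $L$ of $F$ is only guaranteed to have $|L|\geq n/k=\Omega(n/\log n)$. Combined with $p=\Omega(\log n/n)$ this gives only $|L|p=\Omega(1)$, not $|L|p=\Omega(\log n)$ as you claim. Since the lemma must hold for \emph{every} choice of implicit constants in the $O(\log n)$ and $\Omega(\log n/n)$ hypotheses, you cannot assume the constant is large; with $|L|p<1$ your expectation $|C|\cdot(|L|p)^{r-1}\cdot p$ goes to zero as $r$ grows, and the $r$-swap scheme collapses. Even if you reorder and first run 2-swaps to obtain $|L|=(1-o(1))n$ (so that $|L|p=\Omega(\log n)$ genuinely holds), your ``concentration then yields w.h.p.\ success'' is doing real work you have not supplied: the $r$-swap configurations share edges heavily, and a second-moment bound over $\binom{|L|}{r-1}\cdot(r-1)!$ configurations with $r=\Theta(\log n)$ is not routine. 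You also need to iterate this over $O(\log n)$ small cycles with fresh randomness each time, and sub-sprinkling $D^{(1)}$ into that many pieces degrades $p$ by a $\log n$ factor you cannot afford.

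The paper proceeds differently. It first runs your 2-swap phase (its \emph{Phase~2}) using one layer $D^2$, but the analysis is not a per-merge second moment: it takes a union bound over all $\leq z=O(\log n)$ merge sequences (each contributing a factor $(np')^2=O(\log^2 n)$) against the probability $\exp(-\Omega(\log^{1.5}n))$ that no further merge is available once the largest cycle has size below $n-n/\sqrt{\log n}$. This yields one cycle of size $n-o(n)$ plus $O(\log n)$ leftover cycles of arbitrary (possibly constant) size. For the absorption (\emph{Phase~3}) the paper does \emph{not} use generalized $r$-swaps; it uses P\'osa-type double rotations. From each small cycle it first finds a single arc into the big cycle to form a spanning path, then performs $\lfloor\log n/\log\log n\rfloor$ rounds of double rotations, showing via a careful ``good path'' structure that the set of achievable endpoints grows by a factor $\Omega(\log^2 n)$ per round until it reaches size $\Omega(n)$, at which point a single closing arc exists with probability $1-n^{-\Omega(1)}$. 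This avoids any moment computation over large structured configurations and works for arbitrary constants in the hypotheses.
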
  
In \cite{twice}, Lee, Sudakov and Vilenchik also considered the on-line undirected random graph process. They were \emph{orienting} each new edge $\{ u,v \}$ as either the \emph{directed} edge $u \rightarrow v$ or $v\rightarrow u$, to form a \emph{directed} cycle in $G_{\tau_2'}$ (as opposed to coloring edges as they appear).
It turns out that the techniques that we use in order to prove Theorem \ref{maintm} can be used to prove a combination of \cite{pp} and \cite{twice}, namely:
\begin{tm}\label{secondtm}
There exists an on-line algorithm that orients \emph{and} $[q]$-edge-colors $G_1, \dots , G_{n(n-1)/2}$ such that w.h.p.\@ $G_{\tau_{2q}'}$ has $q$ {\bf{\emph{{directed}}}} Hamilton cycles, one in every color in $[q]$.
\end{tm}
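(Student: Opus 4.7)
The plan is to reuse the machinery developed for Theorem \ref{maintm} by converting the undirected online problem into a directed one via random orientations. We would define an online algorithm $COL'$ that, when an undirected edge $\{u,v\}$ arrives, (i) orients it as $u\to v$ or $v\to u$ using an independent fair coin flip and (ii) assigns it a color using a rule modeled on $COL$ applied to the resulting directed edge. The benefit is that, with $\tau'_{2q}$ concentrated around $n\log n$ undirected edges, the oriented edges stochastically approximate a sufficiently dense directed random graph process, so much of the analysis of $COL$ can be imported with minor adjustments.

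The first step is to verify that, w.h.p.\@ at the undirected hitting time $\tau'_{2q}$, every color class has minimum in-degree and out-degree at least $1$. A vertex of degree $d\geq 2q$ receives $d$ independent uniform orientations partitioned into $q$ colors, so a Chernoff-style argument shows that for a ``typical'' vertex the per-color in- and out-degrees each concentrate around $d/(2q)$. The difficulty lies with vertices whose degree only just reaches $2q$ at time $\tau'_{2q}$: such a vertex may fail to receive an in-edge or an out-edge in some color purely by chance. To handle these, the algorithm $COL'$ should incorporate a deficit-correction mechanism in the spirit of \cite{pp} and \cite{twice}: whenever a vertex $v$ is still short of an in- or out-edge in some color, the next edge incident to $v$ has its orientation and color overridden to fill the deficit. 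Standard concentration shows that the set of overridden edges is w.h.p.\@ small and spread out, so it does not disrupt the rest of the analysis.

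Once the minimum-degree condition is secured, producing a directed Hamilton cycle in each color class reduces to an application of Lemma \ref{reduction}, following the blueprint of Theorem \ref{maintm}: decompose each color class into a near 1-factor $F$ of $O(\log n)$ directed cycles, a small subdigraph $H$ of in/out-degree $O(\log n)$ absorbing the deficit-correction edges and any other structural artefacts, and a random-like remainder dominating a $D_{n,p}$ with $p=\Omega(\log n /n)$. The main obstacle will be controlling the override mechanism tightly enough that, after conditioning on its actions, the remaining oriented edges in each color class still dominate such a $D_{n,p}$; this is essentially the same coupling issue already handled for Theorem \ref{maintm}, but care is required because random orientations halve the effective per-edge probability of appearance in any given direction, so the constants hidden in ``$\Omega(\log n /n)$'' must be checked to still give the correct density threshold for Lemma \ref{reduction}.
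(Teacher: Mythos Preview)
Your strategy is essentially the paper's own: the appendix defines an algorithm \textit{COL-ORIENT} that is exactly ``random orientation plus greedy override,'' mirroring \textit{COL} phase by phase (same times $m_1,m_2,m_3$, same $BAD$ set, same $E'$), with \textit{ColorGreedy2} choosing orientation and color jointly whenever an endpoint is not yet $FULL$. The reduction to Lemma~\ref{reduction} then goes through verbatim once the minimum-degree condition and the $D_c$ construction are in hand.

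The one place where your sketch is too optimistic is the line ``standard concentration shows that the set of overridden edges is w.h.p.\ small and spread out.'' For vertices of degree exactly $2q$ at time $\tau'_{2q}$, every incident edge must receive a distinct (orientation, color) pair, and establishing this requires the full coupling machinery of Section~4 (the auxiliary process $G_{\tau'}^{(1)}$, the support sets $B_v(w)$, the events $F(w)$), not a Chernoff bound. Moreover, porting that machinery to the undirected setting introduces one genuinely new obstacle you do not mention: in the directed proof the sets $B_v^-(w)$ are automatically disjoint because an arc has a unique head, but here an edge $\{w,w'\}$ with $w,w'\in N(v)$ could lie in both $B_v(w)$ and $B_v(w')$. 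The paper handles this with an extra structural lemma (w.h.p.\ $G_{\tau'}$ contains no $4$-cycle with a chord), which forces each $w\in N(v)$ to have at most one neighbour inside $N(v)$ and lets one define $B_v(w)\subseteq \{wx: x\notin N(v)\cup\{v\}\}$ of size $\frac{\log n}{100}-2$, restoring disjointness. With that in place, the rest of your outline matches the paper.
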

Since the proofs of Theorems \ref{maintm} and \ref{secondtm} are almost identical we will not give a detailed proof of Theorem \ref{secondtm}. Instead we provide the algorithm and the main difference in the appendix. 
\vspace{3mm}
\\A beautiful consequence of Theorem \ref{secondtm} is the following. A Hamilton cycle is \emph{rainbow} if it does not contain two edges of the same color. Ferber and Krivelevich proved in \cite{rainbow2} that for  $p=(\log n + \log \log n+\omega(1))/n$ if we color uniformly at random the edges of $G_{n,p}$ with $(1+o(1))n$ colors, then the resulting graph w.h.p.\@ contains a rainbow Hamilton cycle,
improving previous results of Frieze and Loh \cite{FL} following Cooper and Frieze \cite{CF}.
Unfortunately, we cannot replace $(1+o(1))n$ colors with $n$. Indeed, among $n$ colors assigned to $\sim \frac{1}{2}n \log n$ edges, there is w.h.p.\@ some color that never appears, so there is no hope of a rainbow Hamilton cycle.
By contrast, in our (slightly) more deterministic on-line setting, we have that $n$ colours are indeed sufficient:
\begin{tm}\label{thirdtm}
There exists an on-line algorithm that orients \emph{and} $[n]$-edge-colors $G_1, \dots, G_{n(n-1)/2}$ such that $G_{\tau_{2q}'}$ has w.h.p. $q$ edge-disjoint {\bf{\emph{{directed rainbow}}}} Hamilton cycles. In particular, $G_{\tau_2'}$ has a rainbow Hamilton cycle (upon ignoring the directions).
\end{tm}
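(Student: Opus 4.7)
The plan is to bootstrap from Theorem~\ref{secondtm} by overlaying a carefully balanced $[n]$-coloring within each of the $q$ color classes produced by its algorithm. Concretely, the online algorithm runs the algorithm of Theorem~\ref{secondtm} as a black box to obtain an orientation and a \emph{pseudo-color} $c(e) \in [q]$ for each arriving edge $e$. Within each pseudo-class $c \in [q]$, pre-sample an infinite sequence $\sigma_c$ formed by concatenating i.i.d.\@ uniformly random permutations of $[n]$; when the $k$-th edge of pseudo-class $c$ arrives, output the color $\sigma_c(k)$. This is online, and by construction every color of $[n]$ is distributed as evenly as possible among the pseudo-class $c$ edges (in particular, no color is missed, which is precisely the obstruction to the offline Ferber--Krivelevich setting).

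By Theorem~\ref{secondtm}, w.h.p.\@ for every $c \in [q]$ the subgraph $D^{(c)}$ of pseudo-class $c$ edges in $G_{\tau'_{2q}}$ contains a directed Hamilton cycle. By a union bound over the $q = O(1)$ pseudo-classes, the task reduces to showing w.h.p.\@ $D^{(c)}$ contains a directed \emph{rainbow} Hamilton cycle. Since the proof of Theorem~\ref{secondtm} invokes Lemma~\ref{reduction} with a suitable 1-factor $F \subseteq D^{(c)}$, error graph $H$, and random residual digraph $D_{n,p}$ (all inside $D^{(c)}$), this further reduces to a rainbow analogue of Lemma~\ref{reduction}: under the balanced random $[n]$-coloring above, w.h.p.\@ $E(F) \cup (E(D_{n,p}) \setminus E(H))$ spans a rainbow directed Hamilton cycle.

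I would prove the rainbow version of Lemma~\ref{reduction} by following the cycle-merging argument implicit in it: the $O(\log n)$ cycles of $F$ are merged into a single Hamilton cycle via $O(\log n)$ \emph{2-swaps}, each replacing a pair of $F$-edges with a pair of $(D_{n,p} \setminus H)$-edges. Since $D_{n,p}$ has degree $\Omega(\log n)$ at every vertex, each swap admits $\Omega(\log^2 n)$ candidate replacement pairs, while at most $n$ colors are ``live'' on the proto-cycle. Prioritizing replacement pairs whose colors are \emph{fresh} on the proto-cycle yields a merged Hamilton cycle with at most $o(n)$ residual color clashes. A \emph{color-absorbing} reserve---a linear-sized collection of spare edges set aside at the outset with a controlled spread of colors, which is possible thanks to the balanced structure of $\sigma_c$---is then used in a final phase of local switches to eliminate these clashes one by one.

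The main obstacle is the endgame: near the end of the merging, almost all $n$ colors are already in use on the proto-cycle, so the naive fraction of fresh candidate pairs drops to $o(1)$. This is precisely why the color-absorber is needed, and constructing it in the online setting (where colors are only revealed as edges arrive) requires the balanced structure of $\sigma_c$: it ensures each color of $[n]$ appears sufficiently often among pseudo-class $c$ edges in every window, enabling a greedy reservation of absorbing edges during the early phase. Once the rainbow version of Lemma~\ref{reduction} is established, the conclusion for $G_{\tau'_2}$ (and hence the final sentence of the theorem) follows from the $q = 1$ specialization upon ignoring orientations.
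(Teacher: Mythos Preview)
Your proposal misses the key trick and, as a result, attempts something far harder than what is needed. The paper's proof is a two-line reduction: run the algorithm of Theorem~\ref{secondtm} to orient and $[q]$-pseudo-color online, and then assign the \emph{actual} color of each edge to be the index of its tail vertex under that orientation. That is, fix an enumeration $V=\{v_1,\dots,v_n\}$, and whenever the algorithm of Theorem~\ref{secondtm} directs an edge from $v_i$ to $v_j$, color it $i$. Any directed Hamilton cycle uses every vertex as an out-vertex exactly once, so it is automatically rainbow under this coloring. Since Theorem~\ref{secondtm} already delivers $q$ edge-disjoint directed Hamilton cycles w.h.p., you are done; no rainbow version of Lemma~\ref{reduction} is needed at all.

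Beyond missing this shortcut, your sketch of a rainbow Lemma~\ref{reduction} has genuine gaps. You summarize the merging as ``$O(\log n)$ 2-swaps,'' but the proof of Lemma~\ref{reduction} in the paper is not just Phase~2 merging: Phase~3 uses up to $\lfloor \log n/\log\log n\rfloor$ rounds of double rotations per cycle, and the final Hamilton cycle can differ from $F$ in $\Theta(\log^2 n/\log\log n)$ edges, not $O(\log n)$. More seriously, your ``color-absorbing reserve'' is asserted rather than constructed: in the online setting you cannot set aside edges with prescribed colors before those colors are assigned, and the balanced block structure of $\sigma_c$ does not by itself guarantee that the reserve edges land in positions usable for local switches on an as-yet-unknown Hamilton cycle. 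The endgame you flag as ``the main obstacle'' is exactly where the argument would break, and the sketch does not resolve it.
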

Indeed, given an algorithm  $COL$-$ORIENT$ satisfying Theorem \ref{secondtm}, we can construct an algorithm $COL$-$RBOW$ that satisfies Theorem \ref{thirdtm} in the following way. Write $V=\{v_1,...v_n\}$,
and whenever $COL$-$ORIENT$ directs an edge from $v_i$ to $v_j$, let $COL$-$RBOW$ color it $i$. At time $\tau_{2q}'$, any directed Hamilton cycle given by $COL$-$ORIENT$ has distinct out-vertices for every edge, and therefore distinct colors given by $COL$-$RBOW$.
So, $COL$-$ORIENT$ yielded $q$ edge-disjoint Hamilton cycles w.h.p.\@, and $COL$-$RBOW$ gave them all rainbow colors.
\vspace{3mm}
\\Throughout the paper we use the well-known result (see for example \cite{abook})  that w.h.p.
$$ n\log n+ n(q-1)\log \log n -\omega \leq \tau_q , \tau_q' \leq  n\log n+ n(q-1)\log \log n +\omega $$
for any  $\omega = \omega(n)$ which tends to infinity as $n$ tends to infinity.
\section{The Colouring Algorithm $COL$}
The coloring algorithm $COL$,  given shortly, will color greedily arcs that are incident to vertices that ``do not see all the colors yet''. These vertices are the most dangerous, as indeed some will only have $q$ out-arcs in $D_{\tau_q}$, accordingly needing \emph{exactly} 1 of each color.
We formalize these most needy of vertices by means of the notation in the following subsection, to guide our description of the algorithm $COL$. Note that the notation given below will be used repeatedly throughout the paper.

\subsection{Some notation}\label{notation}
\begin{notn}
``By/at time t'' is taken to mean ``after t edges have been revealed'', that is, with respect to $D_t$. We also write $\tau$ for $\tau_q$.
\end{notn}
\begin{defn}
For $v \in V_n$, $c \in [q]$ and $t \in \{0,1,...,\tau\}$, we set $d^+_t(v,c)$ (and $d^-_t(v,c)$ resp.\@) to equal the numbers of arcs with out-(in- resp.\@) vertex $v$, that have been revealed by time $t$ and have been assigned color $c$ by the algorithm COL.  Also write $d^+_t(v)$ ($d^-_t(v)$ resp) for the total number of out- (in-) arcs from $v$ by time $t$. Hence $d^+_t(v)=\sum_{c \in [q]}d^+_t(v,c) $  at any time $t$. For the final in/out-degrees we write $d^-(v):=d_\tau^-(v)$ and $d^+(v):=d_\tau^+(v)$.
\end{defn}
\begin{defn}
For $v \in V_n$ and $t \in \{0,1,...,\tau\}$ we set $C^+_v(t)$:=$\{ c \in [q] : d^+_t(v,c)=0 \}$ (i.e the colors that at time $t$ are missing from the out-arcs of $v$). Similarly set $C^-_u(t)$:=$\{ c \in [q] : d^-_t(v,c)=0 \}$.
\end{defn}
\begin{defn}
For $t \in \{0,1,...,\tau\}$ we set $FULL^+_t$:=$\{v \in V_n: C^+_v(t) =\emptyset \}$ (i.e.\@ the set of vertices that at time $t$ have out degree in each color at least one).  Similarly define $FULL^-_t$.
We certainly want both $FULL^+_\tau, FULL^-_\tau$ to contain all of $V_n$ in the end.
\end{defn}
\subsection{Algorithm  $COL$}
Algorithm ColorGreedy($u,v,t$) will be called in multiple places during the algorithm $COL$, hence is given beforehand.
\begin{algorithm}
\caption{ColorGreedy($u,v,t$)}
\uIf{ $u \notin FULL^+_{t-1}$ or $v \notin FULL^-_{t-1}$}{
 color arc $uv$ by a  color  chosen uniformly at random from   $C_u^+(t-1) \cup C_v^-(t-1)$.}
\Else{ color arc $uv$ by a  color  chosen uniformly at random from  $[q]$.}
\end{algorithm}
\\
For $i \in \{0,1,2,3\}$ we also set $m_i=i \cdot e^{-q\cdot 10^{4}} n \log n$,  marking out 3 small but positive fractions of the (expected) number of edges $\tau$, and $p_i=\frac{m_i}{n(n-1)}$.
\begin{algorithm}[H]
\caption{COL}
\For{ $t=1,...,m_1$}{
let $e_t=uv$ \\
Execute ColorGreedy($u,v,t$).}
For $v \in V_n$ set $c^+(v)=1$, $c^-(v)=1$.\\
\For{$t=m_1+1,m_1+2,...,m_2$}{
let $e_t=uv$ \\
\eIf{ $ u\notin FULL^+_{t-1} \text{ or } v \notin FULL^-_{t-1}$}{
Execute ColorGreedy($u,v,t$).}{  Color  the arc $uv$ by the color $c$  satisfying $c\equiv c^+(u) \mod q$,\\
$c^+(u) \leftarrow c^+(u)+1$.
}}
\For{$t=m_2+1,m_2+2,...,m_3$}{
let $e_t=uv$ \\
\eIf{ $ u\notin FULL^+_{t-1} \text{ or } v \notin FULL^-_{t-1}$}{
Execute ColorGreedy($u,v,t$).}{ Color  the arc $uv$ by the color $c$  satisfying $c\equiv c^-(v) \mod q$,\\
$c^-(v) \leftarrow c^-(v)+1$.
}}
For $i \in \{1,2,3\}, * \in \{+,-\}$ set $B^*_i$:=$\{v \in V_n: d^*_{m_i}(v)-d^*_{m_{i-1}} \leq \epsilon \log n\},$ where $\epsilon=e^{-q \cdot 10^{6}}$.
\\Furthermore set $BAD$:=$B_1^+ \cup B_1^- \cup B_2^+ \cup B_3^-$ and
$E':=\emptyset$.\\
\For{ $t=m_3+1,...,\tau$}{
let $e_t=uv$ \\
	\uIf{$ u\notin FULL^+_{t-1} \text{ or } v \notin FULL^-_{t-1}$}{
    Execute ColorGreedy($u,v,t$) \;
  }
  \uElseIf{$u \in BAD$ or $v \in BAD$}{
    Color  the arc $uv$ by a  color $c$ that minimizes  
    $d^+_t(u,c)\mathbb{I}(u \in BAD)+  d^-_t(v,c)\mathbb{I}(v \in BAD) .$  If there is more than one such color then choose one from them uniformly at random. \;
  }
  \Else{
    Execute ColorGreedy($u,v,t$).
  \\  Add the arc $uv$ to $E'$.
  }
  }
\end{algorithm}

\begin{rem}\label{rem1}
 Suppose at some time $t$ that $e_t=uv$ and ${C_u^+}(t-1) \cup {C_v^-}(t-1) \neq \emptyset $, i.e.\ $u \not\in FULL^+_{t-1}$ is still missing an out-edge color or $v \not\in FULL_{t-1}^-$ is still missing an in-edge color.  Then any color from ${C_u^+}(t-1) \cup {C_v^-}(t-1)$ has probability at least $\frac{1}{q}$ to be chosen to color $uv$.
\end{rem}
\begin{rem}\label{rem2}
The second priority (after the vertices needing to be greedy) is to build the 1-factor $F$ in each color needed to power Lemma \ref{reduction}, for which we aim to have as many vertices with at least a prescribed out-degree as possible (in fact, 6 will do).
The cycling with $c^+$ and $c^-$ between edge colors during times $(m_1,m_2]$ and $(m_2, m_3]$ will ensure as many of the $FULL$ vertices as possible receive an ample balance of edges in each color. The few exceptions are confined to $BAD$ and forced to balance their colors for the remainder of the process.
 Meanwhile, the arcs in $E'$ enjoy full randomness, and can be used to build the desired Hamilton cycles using classical techniques. 
\end{rem}
\section{ Structural results}
\vspace{5mm}
Recall the following relations between $D_{n,m}$ and $D_{n,p}$ (see \cite{abook}). Let $Q$ be any property of $D_{n,m}$ for some $m$, $0 \leq m \leq n(n-1)$ and let $p=\frac{m}{n(n-1)}$ then,
\begin{equation}
\mathbb{P}(D_{n,m} \text{ has } Q) \leq 10 \sqrt{m} \P(D_{n,p} \text{ has } Q).
\end{equation}
Moreover if $Q$ is a monotone increasing property i.e.\@ it is preserved under edge addition or monotone decreasing property i.e.\@ it is preserved under edge deletion, then we have
\begin{equation}
\mathbb{P}(D_{n,m} \text{ has } Q) \leq 3\mathbb{P}(D_{n,p} \text{ has } Q).
\end{equation}
For $p\in [0,1]$ we denote by $Bin (k,p)$ the random variable following the Binomial distribution with $k$ objects each appearing with probability $p$. Also, we will make use of the Chernoff bounds (see \cite{JLR}): namely, if $X$ is a $Bin(k,p)$ random variable with mean $\mu=np$ then for any $\epsilon>0$ we have
\begin{align}
Pr[X \leq (1-\epsilon)\mu] \leq e^{-\frac{\epsilon^2\mu}{2}}, \\
Pr[X \geq (1+\epsilon)\mu] \leq e^{-\frac{\epsilon^2\mu}{2+\epsilon}}.
\end{align}
Finally for the rest of the paper we let
$$ p_\ell=\frac{\log n+(q-1)\log \log n-\omega(n)}{n}, \hspace{10mm}  \hspace{10mm} m_\ell=n(n-1)p_\ell,  $$
and $$ p_u=\frac{\log n+(q-1)\log \log n+\omega(n)}{n}, \hspace{20mm} m_u=n(n-1)p_u,$$
where $\omega (n) = \frac{1}{2}\log \log \log n$. Recall that w.h.p.\@ $D_{n,m_\ell}$ has zero vertices of in- or out- degree less than $q-1$. In addition w.h.p.\@ $m_\ell \leq \tau \leq m_u$.

\begin{lem}\label{lem1}
W.h.p.\ for $k \in \big[q-1, \frac{3\log n}{\log \log n}\big],$  $D_{n,m_\ell}$ has at most $v_k:=\frac{e^{2\omega(n)}(\log n)^{k-q+1}}{(k-1)!} $ vertices of in-degree \emph{at most} $k$. Hence, the same is true for vertices of in-degree \emph{exactly} $k$, and similarly for out-degree $k$.
\end{lem}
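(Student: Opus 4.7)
The plan is to reduce to $D_{n,p_\ell}$ via the monotone transfer (2): the event ``more than $v_k$ vertices have in-degree $\leq k$'' is monotone-decreasing in edges (adding an arc can only remove vertices from the low-in-degree set), so up to a factor of $3$ we may work in $D_{n,p_\ell}$ where $p_\ell = m_\ell/(n(n-1))$. The structural fact I will exploit is that in $D_{n,p_\ell}$ the in-degrees $\{d^-(v) : v \in V_n\}$ are mutually \emph{independent}, since each is determined by a disjoint set of $n-1$ potential arcs. Consequently, writing $q_k := \mathbb{P}[Bin(n-1, p_\ell) \leq k]$, the count $Y_k := |\{v : d^-(v) \leq k\}|$ is itself a $Bin(n, q_k)$ random variable. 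The out-degree case is handled identically by symmetry.

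A standard Poisson-type estimate using $np_\ell = \log n + (q-1)\log\log n - \omega(n)$ gives
\[
\mathbb{P}[Bin(n-1,p_\ell)=j] = (1+o(1))\,\frac{(np_\ell)^j}{j!}\,e^{-np_\ell} = (1+o(1))\,\frac{e^{\omega(n)}(\log n)^{j-q+1}}{j!\,n},
\]
and since $k \leq 3\log n/\log\log n$ the sum $q_k$ is dominated by its $j=k$ term (consecutive ratios exceed $np_\ell/(k+1) \geq \log\log n/3$). Hence $\mathbb{E}[Y_k] \leq (1+o(1))\,e^{\omega(n)}(\log n)^{k-q+1}/k! \leq 2\,v_k/(k\,e^{\omega(n)})$, and applying the direct binomial upper tail $\mathbb{P}[Bin(n,q_k)\geq v_k] \leq (e\mathbb{E}[Y_k]/v_k)^{v_k}$ yields
\[
\mathbb{P}[Y_k \geq v_k] \leq \left(\frac{2e}{k\,e^{\omega(n)}}\right)^{v_k} \leq e^{-\omega(n)\,v_k / 2}
\]
for $n$ large. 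Since $v_k \geq v_{q-1} = \Theta(\log\log n)$ throughout the range, this bound is $e^{-\Omega(\omega(n)\log\log n)}$, comfortably surviving a union bound over the $O(\log n/\log\log n)$ admissible values of $k$ and yielding $o(1)$ total failure probability. The ``hence'' clause of the lemma is then immediate, since the count of vertices with in-degree exactly $k$ is bounded by the count with in-degree at most $k$.

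The main obstacle is precisely that a naive Markov estimate is too weak: $\mathbb{E}[Y_k]$ and the target $v_k$ differ by only a factor $k\,e^{\omega(n)}$, which for small $k$ (say $k=q-1$) is just $\Theta(\sqrt{\log\log n})$; Markov per-$k$ thus gives failure probability $\Omega(1/\sqrt{\log\log n})$, too large to absorb a union bound over $\Theta(\log n/\log\log n)$ values of $k$. The crucial move is the observation that in-degrees of distinct vertices are independent in $D_{n,p}$, allowing us to upgrade Markov to a Chernoff/direct-binomial tail and drive the failure probability super-polynomially small in $\log\log n$. Everything else reduces to routine binomial asymptotics.
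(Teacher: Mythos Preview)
Your proof is correct and follows essentially the same route as the paper: both transfer to $D_{n,p_\ell}$ via monotonicity, use the independence of in-degrees in $D_{n,p}$ to bound $\mathbb{P}[Y_k \geq v_k]$ by $\binom{n}{v_k}q_k^{v_k} \leq (e\,nq_k/v_k)^{v_k}$, estimate $q_k$ via the Poisson approximation to obtain $e^{-\omega(n)v_k/2}$, and finish with a union bound over $k$. Your explicit framing of $Y_k$ as $\mathrm{Bin}(n,q_k)$ and the discussion of why Markov alone is insufficient are nice expository additions, but the underlying computation is the same as the paper's.
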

\begin{proof}
By taking a union bound and using (2) for the first inequality, we get
\begin{align*}
\mathbb{P}( &D_{n,m_\ell} \text{ has more than } v_k \text{ vertices of in-degree at most } k )\\
& \leq \binom{n}{v_k}  \Bigg[3 \sum_{j=0}^{j=k} \binom{n-1}{j}  (1-p_\ell)^{n-j-1} p_\ell^j\Bigg]^{v_k}\leq  \bigg(\frac{en}{v_k}\bigg)^{v_k} \bigg[ 3(k+1)\binom{n-1}{k} (1-p_\ell)^{n-k-1}p_\ell^k \bigg]^{v_k}\\
&\leq \Bigg[\frac{en}{v_k} \frac{3(k+1)n^k}{k!}e^{-\log n -(q-1)\log \log n +\omega(n)+o(1)} \bigg(\frac{\log n +(q-1) \log n\log n -\omega(n)}{n}\bigg)^k\Bigg]^{v_k}\\
&\leq \Bigg[e^{ -\omega(n) +O(1)} \bigg(1+\frac{q\log \log n}{\log n}\bigg)^k\Bigg]^{v_k}
\leq \bigg[e^{ -\omega(n) +O(1)+\frac{ q\log \log n}{\log n} k}\bigg]^{v_k} \leq e^{ -\frac{\omega(n)v_k}{2}}.
\end{align*}
Hence
\begin{align*}
\mathbb{P}\bigg( \text{for some } k &\in\bigg[q-1, \frac{3\log n}{\log \log n}\bigg] \text{ there are more than } v_k \text{ vertices of out-degree k in } D_{n,m_\ell}\bigg)
\\ & \leq \sum_{k=q-1}^{\frac{3\log n}{\log \log n}} e^{ -\frac{\omega(n)v_k}{2}}=\sum_{k=q-1}^{\frac{3\log n}{\log \log n}} \big(e^{ -\frac{1}{4} \log \log \log n}\big)^{v_k} =o(1).
\qedhere
\end{align*}
\end{proof}
\begin{defn}\label{undd}
 For $u,v \in V_n$ let the undirected distance from $u$ to $v$ at time $t$, denoted by $d'_t(u,v)$, be the distance from $u$ to $v$ in the graph that is obtained from $D_t$ when we ignore the orientations of the edges.
\end{defn}
\begin{defn}\label{SMALL}
Let $SMALL:=\{ v \in V: d^+_{\tau}(v)\leq \frac{\log n}{100} \text{ or } d^-_{\tau}(v) \leq \frac{\log n}{100} \}$. Since we expect $\tau \geq n \log n$, $SMALL$ consists of vertices with significantly smaller degree than their expected value. 
\end{defn}
\begin{lem}\label{lem2}
 W.h.p.\ for every $v,w \in $ $SMALL,
  d_{\tau}'(v,w) \geq 2$.
\end{lem}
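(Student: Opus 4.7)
The plan is a first-moment estimate. Since w.h.p.\ $m_\ell\leq \tau\leq m_u$, under this event every $v\in SMALL$ has some in-degree or out-degree at most $\log n/100$ already in $D_{n,m_\ell}$ (degrees only grow with $t$), and any arc present at time $\tau$ is an arc of $D_{n,m_u}$. It therefore suffices to show
\[
\mathbb{E}\Big[\,\#\{(v,w):v\neq w,\ v\to w\in D_{n,m_u},\ v,w\text{ each have some in/out-deg }\leq \tfrac{\log n}{100}\text{ in }D_{n,m_\ell}\}\,\Big]=o(1),
\]
and then apply Markov's inequality so that w.h.p.\ no such pair exists.

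Passing to the coupled $D_{n,p}$ model via the inequality (1), with $D_{n,p_\ell}\subseteq D_{n,p_u}$ in the usual coupling, fix an ordered pair $(v,w)$ and a choice of sign for each of $v,w$ (say, both in-degrees are taken to be the low ones). The arc $v\to w$ appears in $D_{n,p_u}$ with probability $p_u=\Theta(\log n/n)$. Since $v\to w$ is an out-edge of $v$, conditioning on its presence leaves the in-edges of $v$ in $D_{n,p_\ell}$ distributed as an independent $Bin(n-1,p_\ell)$ variable; similarly, the in-edges of $w$ from $V\setminus\{v\}$ in $D_{n,p_\ell}$ are an independent $Bin(n-2,p_\ell)$ variable. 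Both have mean $(1+o(1))\log n$, and a Chernoff lower-tail estimate (equivalently a direct binomial/Stirling computation applied to $(\log n/100)!$) gives
\[
\mathbb{P}\bigl(Bin(n-1,p_\ell)\leq \tfrac{\log n}{100}\bigr)\leq n^{-1+\alpha}
\]
for a fixed constant $\alpha<1/2$ (explicitly $\alpha=\log(100e)/100<0.06$).

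Summing over the four sign combinations and the $\Theta(n^2)$ ordered pairs yields an expected count of order
\[
n^2\cdot p_u\cdot \bigl(n^{-1+\alpha}\bigr)^2=O\bigl(n^{-1+2\alpha}\log n\bigr)=o(1),
\]
which remains $o(1)$ even after absorbing the $\sqrt{m_u}$ loss from transferring back to $D_{n,m_u},D_{n,m_\ell}$ via (1). Combining with the w.h.p.\ sandwich $m_\ell\leq\tau\leq m_u$ completes the proof.

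The main subtle point is the mixed timing: the low-degree condition is evaluated at $m_\ell$ while adjacency is evaluated at $m_u$. One must couple $D_{n,p_\ell}$ inside $D_{n,p_u}$ and check that conditioning on the single arc $v\to w$ in the larger graph leaves the in-edges of $v$ and $w$ in the smaller graph as genuinely independent binomial variables, so that the two Chernoff bounds multiply cleanly. Once this is set up, the remainder is a standard first-moment argument.
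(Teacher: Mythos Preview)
Your first-moment argument follows the same approach as the paper. Two points deserve comment.

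First, inequality~(1) transfers a property of a \emph{single} $D_{n,m}$ to the corresponding $D_{n,p}$; it does not directly handle an event depending jointly on the nested pair $(D_{m_\ell},D_{m_u})$. So your step ``passing to the coupled $D_{n,p}$ model via (1)'' needs additional justification---the $\sqrt{m_u}$ loss you budget for at the end has no stated source. The paper sidesteps this by placing both conditions at the single time $m_u$: it defines an enlarged set $SMALL'\supseteq SMALL$ with threshold $\tfrac{1}{100}\log n+2\omega(n)$, expressed entirely in terms of $D_{m_u}$, and then applies (1) once to $D_{n,m_u}$. Your two-time route can also be made rigorous (e.g.\ by a direct hypergeometric computation in the process model, or by proving a two-level analogue of (1)), but as written this transfer is a gap.

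Second, the paper in fact proves the stronger conclusion $d'_\tau(v,w)\ge 3$ for distinct $v,w\in SMALL$: its bound sums over paths of length $k=1,2$, ruling out common neighbours as well as adjacency. That stronger form is what is invoked later (Case~1 in the proof of Theorem~\ref{hterm}). Your argument gives only non-adjacency, $d'_\tau(v,w)\ge 2$, which matches the lemma as literally stated but is weaker than what the paper proves and subsequently uses.
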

\begin{proof}
We weaker the definition of $SMALL$ so that it suffices to do the computation in $D_{m_u}$. Specifically, 
set $SMALL'$:=$\{ v \in V: d^+_{m_u}(v)\text{ or } d^-_{m_u}(v) \leq \frac{1}{100}\log n+ 2\omega(n) \}$. (1) gives us
\begin{align*}
\mathbb{P}\big(   v,w &\in SMALL' \text{ and } d_{m_u}'(v,w) \leq 2 )\\
& \leq 10 \sqrt{m_u} \sum_{k=1,2} \binom{n-2}{k-1}(2p_u-p_u^2)^k \Bigg[2 \mathbb{P} \bigg(Bin( n-1-k,p_u) \leq \frac{\log n}{100}+2\omega(n) -1    \bigg)\Bigg]^2\\
&\leq  \frac{200\sqrt{n} {\log }^{2.5}n}{n}  \Bigg[\exp \bigg(-(1-o(1))\log n \bigg(\frac{1}{100} \log \frac{1}{100} +\frac{99}{100}  \bigg) \bigg)\bigg]^2  =o(n^{-2.3}).
\end{align*}
 At the second inequality we used that $\mathbb{P}(Bin(\lambda / p,p)\leq \lambda-t) 
\leq \exp \{-\lambda[(1+x)\log (1+x)-x]\}$ (see \cite{JLR}), with $x=-\frac{t}{\lambda} \sim -\frac{99}{100}$ for $\lambda=(n-1-k)p_u \sim \log n, t=\lambda - \frac{\log n}{100}-2 \omega(n)$ here.
In the event $m_\ell \leq \tau \leq m_u$, as $D_\tau $ precedes $D_{m_u}$, we have that  $E_\tau \subseteq E_{m_u}$ and $|E_{m_u}\backslash E_\tau| \leq 2\omega(n)$. Furthermore if $d'_{\tau}(v,w) \leq 2$ then $d'_{m_u}(v,w) \leq 2$. Therefore $m_\ell \leq \tau \leq m_u$ implies that $SMALL\subseteq  SMALL'$. Hence,
\begin{equation*}
\mathbb{P} \Big( \exists v,w \in  SMALL \text{ such that } d_{\tau}' (v,w) \leq 2\Big) \leq \binom{n}{2} o(n^{-2.3})+\P\Big( \tau \notin [m_\ell, m_u]\Big) =o(1).
\qedhere
\end{equation*}
\end{proof}
\begin{notn}
For a digraph $D$ denote by $\Delta^+(D)$ and $\Delta^-(D)$ its maximum out- and in-degree respectively. Futhermore set 
$\Delta(D)=\max \{ \Delta^+(D), \Delta^-(D) \}$.
\end{notn}
\begin{lem}\label{maxdegree}
W.h.p.\@ $\Delta(D_\tau)\leq 12\log n$.
\end{lem}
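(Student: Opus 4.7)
The plan is to bound $\Delta(D_{m_u})$ instead of $\Delta(D_\tau)$, using the fact (noted in the paper) that w.h.p.\ $\tau \leq m_u$, which implies $D_\tau \subseteq D_{m_u}$ and therefore $\Delta(D_\tau) \leq \Delta(D_{m_u})$. So it suffices to show that $\P(\Delta(D_{n,m_u}) \geq 12 \log n) = o(1)$.

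Since having maximum in/out-degree $\leq 12 \log n$ is a monotone decreasing property, by inequality (2) we have
$$\P(\Delta(D_{n,m_u}) \geq 12 \log n) \leq 3 \P(\Delta(D_{n,p_u}) \geq 12 \log n).$$
In $D_{n,p_u}$, each vertex $v$ has out-degree $d^+(v) \sim Bin(n-1, p_u)$ with mean $\mu = (n-1)p_u = (1+o(1))\log n$. I will apply a Chernoff bound with $\epsilon = 11$: by inequality (4),
$$\P\left(d^+(v) \geq 12 \log n\right) \leq \P\big(Bin(n-1,p_u) \geq (1+11)\mu\big) \leq \exp\!\left(-\tfrac{121 \mu}{13}\right) \leq n^{-9},$$
for $n$ large. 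The same bound holds for $d^-(v)$ by symmetry.

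A union bound over the $n$ vertices and the two choices of orientation gives
$$\P(\Delta(D_{n,p_u}) \geq 12 \log n) \leq 2n \cdot n^{-9} = o(1),$$
and combining this with the above transfers and the bound $\P(\tau > m_u) = o(1)$ concludes the argument.

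There is essentially no obstacle here; this is a standard degree-concentration calculation. The only minor care needed is to make sure one invokes the $D_{n,m} \to D_{n,p}$ comparison through inequality (2) (for monotone properties) rather than (1), so as to avoid a wasteful $\sqrt{m_u}$ factor that would still be absorbed by the Chernoff bound but makes the computation less clean.
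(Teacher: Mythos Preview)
Your argument follows the same structure as the paper's: condition on $\tau\le m_u$, transfer to $D_{n,p_u}$ via inequality~(2), and take a union bound over vertices and directions. The only difference is cosmetic --- the paper bounds the binomial tail directly via $\binom{n-1}{12\log n}p_u^{12\log n}\le\big(\tfrac{enp_u}{12\log n}\big)^{12\log n}$ rather than invoking Chernoff.

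One small slip: since $\mu=(n-1)p_u=\log n+(q-1)\log\log n+\omega(n)-o(1)>\log n$, you have $12\mu>12\log n$, so the inequality $\P(d^+(v)\ge 12\log n)\le \P\big(Bin(n-1,p_u)\ge 12\mu\big)$ goes the wrong way. The fix is immediate: take $\epsilon=\tfrac{12\log n}{\mu}-1=11-o(1)$ (or simply $\epsilon=10$, noting $12\log n\ge 11\mu$ eventually), and the exponent $\epsilon^2/(2+\epsilon)$ is still well above~$8$, giving the same $n^{-8}$-type tail and the union bound goes through unchanged.
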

\begin{proof}
We implicitly condition on the event $\{\tau\leq m_u\}$. Using (2)
\begin{align*}
\mathbb{P}\bigg( \Delta^+(D_\tau) \text{ or } \Delta^-(D_{\tau}) \geq 12\log n \bigg)
& \leq  3 \cdot2n \binom{n-1}{12\log n} p_u^{12\log n}
\leq 6 n \bigg( \frac{en}{12\log n}\bigg)^{12\log n} p_u^{12\log n} \\
& \leq 6 n \bigg( \frac{en}{12\log n} \cdot \frac{2\log n}{n} \bigg)^{12\log n} 
=o(1).
\qedhere
\end{align*}
\end{proof}

\begin{lem}\label{lem4}
W.h.p.\@ $\Delta(D_{m_1}) \leq \frac{\log n}{10^3q}$.
\end{lem}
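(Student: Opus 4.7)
The plan is to mimic the proof of Lemma \ref{maxdegree}. The event $\{\Delta(D_{n,m_1})\geq k\}$ is monotone increasing in the edges, so by inequality (2) it suffices to bound the probability of the analogous event in $D_{n,p_1}$, where $p_1 = m_1/(n(n-1)) = e^{-q\cdot 10^{4}}\log n/(n-1)$. Setting $k = \lceil \log n/(10^{3}q)\rceil$ and taking a union bound over the $2n$ pairs (vertex, direction $\in \{+,-\}$) gives
\[
\mathbb{P}\bigl(\Delta(D_{n,p_1})\geq k\bigr) \;\leq\; 2n\binom{n-1}{k}p_1^{k} \;\leq\; 2n\left(\frac{e(n-1)p_1}{k}\right)^{k}.
\]

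The essential point is that although $k = \Theta(\log n)$, it is exponentially larger than the mean in/out-degree $(n-1)p_1 = e^{-q\cdot 10^{4}}\log n$. Concretely, the base equals $10^{3}eq \cdot e^{-q\cdot 10^{4}}$, whose logarithm is at most $8 + \log q - 10^{4}q \leq -9991\,q$ uniformly for $q\geq 1$. Hence $k\log(e(n-1)p_1/k) \leq -9.99\log n$ (the factor of $q$ in the denominator of $k$ cancels the factor of $q$ in the logarithm of the base), which gives $(e(n-1)p_1/k)^{k} \leq n^{-9}$. Multiplying by $2n$ yields $o(1)$, completing the proof via inequality (2).

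There is no real obstacle here: the lemma is a routine binomial upper tail estimate in a regime where the target degree $k$ vastly exceeds the mean. The only thing one has to be careful about is the bookkeeping of constants, namely verifying that the exponential factor $e^{-q\cdot 10^{4}}$ hidden in $m_1$ is small enough relative to $1/(10^{3}q)$ in the target degree so that, upon taking the $k$-th power, the $q$ cancels and the resulting polynomial decay rate $n^{-\Omega(1)}$ is independent of $q$.
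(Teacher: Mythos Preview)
Your proof is correct and follows essentially the same route as the paper's own argument: both pass from $D_{n,m_1}$ to $D_{n,p_1}$ via inequality (2), take a union bound over the $2n$ (vertex, direction) pairs, bound $\binom{n-1}{k}p_1^{k}$ by $(e(n-1)p_1/k)^{k}$, and observe that the base $10^{3}eq\,e^{-q\cdot 10^{4}}$ is so small that its $k$-th power beats the polynomial prefactor. The only cosmetic difference is that the paper applies the factor $3$ from (2) at the outset (writing $3\cdot 2n$), whereas you defer it to the last sentence.
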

\begin{proof}
 Recall $p_1=\frac{m_1}{n(n-1)}=\frac{e^{-q\cdot10^4}\log n}{ n-1}$. Then (2) gives us that
\begin{align*}
\mathbb{P}\bigg( \Delta^+(D_{m_1})\text{ or } \Delta^-(D_{m_1}) \geq  \frac{\log n}{10^3q} \bigg)
& \leq  3 \cdot2n \binom{n-1}{\frac{\log n}{10^3q}} p_1^{\frac{\log n}{10^3q}}
\leq 6 n \bigg( \frac{10^3qe(n-1)}{\log n}\bigg)^{\frac{\log n}{10^3q}} p_1^{\frac{\log n}{10^3q}} \\
& \leq 6 n \bigg( 10^3qe^{-q\cdot10^4+1}\bigg)^{\frac{\log n}{10^3q}}
=o(1).
\qedhere
\end{align*}
\end{proof}

\section{Minimum degree 1 in color $c$}
\begin{tm}\label{tm2}
W.h.p.\ COL succeeds in assigning colors to the arcs so that $\forall c \in [q]$ and $\forall v \in V_n$ we have
$d_{\tau}^+(v,c),$ $d^-_{\tau}(v,c) \geq 1$.
\end{tm}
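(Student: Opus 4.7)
By symmetry under edge reversal it suffices to prove $FULL^+_\tau=V_n$ w.h.p.; the $FULL^-_\tau$ claim follows identically on the reverse digraph. The engine is Remark~\ref{rem1}: whenever an arc $v\to w$ is revealed at a time $t$ with $v\notin FULL^+_{t-1}$, the algorithm (in every phase) invokes ColorGreedy on that arc, which picks a color uniformly from $C_v^+(t-1)\cup C_w^-(t-1)$; each missing out-color of $v$ is therefore chosen with probability at least $1/q$. Chaining this over the out-edges of $v$ gives the coupon-collector tail
\[
\Pr\big(c\notin\{\text{colors of out-edges of }v\}\big)\;\le\;(1-1/q)^{d_\tau^+(v)}\qquad\forall\, v,\,c\in[q],
\]
and reduces Theorem~\ref{tm2} to showing $\sum_v (1-1/q)^{d_\tau^+(v)}=o(1)$ w.h.p.

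The naive route is too weak: the typical $v$ has $d_\tau^+(v)\approx\log n$, giving per-vertex tail $n^{-\log(q/(q-1))}$ and a sum of order $n^{1-\log(q/(q-1))}$. The remedy is a strengthening of ColorGreedy's guarantee: if in addition $w\in FULL^-_{t-1}$, then $C_w^-(t-1)=\emptyset$ and the union reduces to $C_v^+$, so the color is drawn uniformly from the missing out-colors of $v$, \emph{guaranteeing} a new color. Hence once ``almost every'' vertex lies in $FULL^-$, the effective coupon collector at any remaining non-$FULL^+$ vertex $v$ finishes in only $q$ out-edges rather than the $\Theta(\log n)$ needed in the adversarial regime.

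My plan is therefore to split by out-degree and glue the two directions together via the structural lemmas. For $v\notin SMALL$ (so $d_\tau^+(v),d_\tau^-(v)\ge \log n/100$), I would apply the coupon-collector tail directly, combined with Lemma~\ref{lem1} and concentration of degrees to count exceptional vertices; for $v\in SMALL$, I would invoke Lemma~\ref{lem2} (no two members of $SMALL$ are adjacent) to conclude that every in-vertex $w$ of an out-edge of $v$ lies outside $SMALL$ and hence, by the previous case, is in $FULL^-_\tau$ w.h.p., placing us in the deterministic regime where $v$ reaches $FULL^+$ in $q$ arcs. The main obstacle is closing the bootstrap cleanly: the $v\notin SMALL$ step itself needs a union-bound improvement, because $q(1-1/q)^{\log n/100}$ is not $o(1/n)$ on its own. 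This is overcome by iterating the $FULL^+/FULL^-$ implications — an initial Markov bound on $|V\setminus FULL^\pm_\tau|$ feeds into the deterministic bonus above, which in turn sharpens the complements — with Lemmas~\ref{lem1}, \ref{lem2}, and \ref{maxdegree} controlling the atypical vertices throughout.
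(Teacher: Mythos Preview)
Your plan has the right instinct --- the ``deterministic bonus'' when $w\in FULL^-_{t-1}$ is exactly the phenomenon that makes the theorem true --- but the argument as written has a timing gap that the bootstrap sketch does not close. The event you need is $\{w\in FULL^-_{t-1}\}$ at the \emph{random} time $t=h(vw)$ when the arc $vw$ appears, whereas every bound you produce (Markov on $|V\setminus FULL^-_\tau|$, the SMALL separation lemma, etc.) concerns the terminal time $\tau$. Knowing $w\in FULL^-_\tau$ says nothing about whether $w$ was already full when $vw$ showed up. No amount of iterating ``$FULL^+\leftrightarrow FULL^-$ at time $\tau$'' fixes this, because each pass still only controls the terminal state. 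And as you correctly note, the raw coupon-collector tail $(1-1/q)^{d^+_\tau(v)}$ is far too weak for a union bound even at degree $\log n$ (for $q=2$ it gives $n^{-\log 2}\gg n^{-1}$).

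The paper's proof supplies the missing mechanism: for each out-neighbour $w\in N_L^+(v)$ it fixes a set $B_v^-(w)$ of $\tfrac{\log n}{100}-1$ other in-arcs of $w$, and observes that the \emph{position} of $vw$ among $A_v^-(w)=B_v^-(w)\cup\{vw\}$ is uniform. If $vw$ is preceded by even one full block of $q$ arcs in $B_v^-(w)$ that happen to receive all $q$ colors, then $w\in FULL^-$ when $vw$ arrives, and $vw$ contributes a new out-color to $v$ with certainty. A short calculation (the event $F(w)$) gives $\Pr(w\notin FULL^-\text{ at time }h(vw))\le \frac{100q^{q+1}}{\log n}$. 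To make the events $\{F(w):w\in N_L^+(v)\}$ genuinely independent, the paper couples $COL$ to an auxiliary process $COL1(v)$ in which all arcs in $\bigcup_w B_v^-(w)$ are colored uniformly at random; this only makes it harder for $w$ to reach $FULL^-$, so the coupling is monotone in the right direction. One then gets a clean binomial tail $\binom{d^+(v)-1}{q-1}\big(\tfrac{100q^{q+1}}{\log n}\big)^{d^+(v)-q}$ for the probability that fewer than $q$ out-arcs of $v$ contribute. Summing this against the degree profile of Lemma~\ref{lem1} handles all $v$ with $d^+(v)\ge\log\log n$; for smaller degrees a refinement (Lemma~\ref{lem13}) conditions on whether the first out-arc of $v$ is late enough that every $w$ has already seen many in-arcs, buying one extra factor of $\tfrac{1}{\log n}$, which is precisely what is needed to close the sum near $d^+(v)=q$.
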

We will approach this theorem by conditioning on the final digraph $D_\tau$ (in particular, on Lemmas \ref{lem1} and \ref{lem2}) and analysing the randomness of the edges' order and color. 
By symmetry, it suffices to prove the out-degree part. The proof will follow from Lemmas \ref{lem12}, \ref{lem13} given below.
\vspace{5mm}
\\For most of this section, at least until Lemma \ref{lem13}, $v \in V_n$ will be arbitrary but (crucially) fixed. 
Denote by $N^+(v)$ the out-neighbours of $v$ in $D_{\tau}$ and set $N_L^+(v):=N^+(v) \backslash SMALL_{\tau}$-  we aim for these larger neighbours to provide $v$ with the colors it needs, and thankfully, Lemma \ref{lem2} ensures $\leq 1$ neighbour was in $SMALL$.
 Furthermore let
 $A^+_L(v)$ be the set of arcs arising from $N_L^+(v)$ $\big($i.e.\
 $A_L^+(v)$:=
 $\{vw \in E_{\tau}:w \in N_L^+(v) \}\big)$.
  For $w \in N_L^+(v)$ we fix a set $B^-_{v}(w)$ of  $\frac{\log n}{100}-1$ arcs
  in $\big(V_n\backslash\{v,w\}\big)\times \{w\}$.
 Finally we let $A^-_v(w):= B^-_v(w) \cup \{vw\}$.
\begin{figure}[!h]
\centering
\begin{tikzpicture}[xscale=1]
\draw[fill] (-1.5,1) circle [radius=0.04]; \node at (-1.4,0.8) {$v$};
\draw (3,0.9) ellipse (0.3cm and 1cm);   \node at (3.7,0) {$N_L^+(v)$};
\draw[fill] (3,2.1) circle [radius=0.03]; \draw[->] (-1.5,1)--(2.95,2.08);
\draw[fill] (3,0.4) circle [radius=0.03]; \draw [->,blue] (-1.5,1)--(2.95,0.41);
\draw[fill] (3,0.6) circle [radius=0.03]; \draw [->,blue] (-1.5,1)--(2.95,0.61);
\draw[fill] (3,1) circle [radius=0.03]; \draw [->,blue] (-1.5,1)--(2.95,1);
\draw[fill,] (3,1.6) circle [radius=0.03]; \draw [->,blue] (-1.5,1)--(2.95,1.58);  \node at (3.15,1.4) {$w$};
\draw [dotted,thick] (3,1.5) -- (3,1.1);
\draw [dotted,thick] (3,0.9)--(3,0.7);
\draw [dotted,thick] (1.25,1.35) -- (1.25,1);
\draw [dotted,thick] (1.25,1)--(1.25,0.6);
\node[below] at (1.25,0.6) {$A_L^+(v)$};
\draw (6.5,1.4) ellipse (0.25cm and 0.6cm);   \node at (3.7,0) {$N_L^+(v)$};
\draw[fill] (6.5,1.8) circle [radius=0.03]; \draw [->,red] (6.5,1.8)--(3.05,1.61);
\draw[fill] (6.5,1) circle [radius=0.03]; \draw [->,red] (6.5,1)--(3.05,1.58);
\draw [dotted,thick] (6.5,1)--(6.5,1.8);  \node[below] at (5,1.25) {$B_v^-(w)$};
\draw [dotted,thick] (5,1.7)--(5,1.25);
\end{tikzpicture}
\caption{arcs in $A_L^+(v)$ and in $B_u^-(w)$ are in blue and red respectively.}
\end{figure}
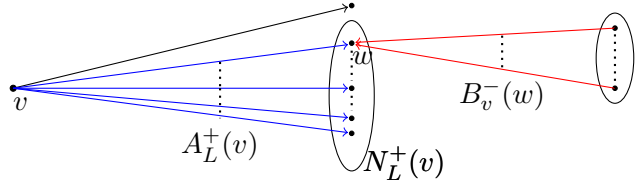
\\
We will only need to analyse the algorithm's effect on $\bigcup_w A_v^-(w)$ to show $v$ is unlikely to obtain all the colors it needs.
For this analysis, we couple the algorithm as follows.
Let $D_{\tau}^{(1)}$ and $D_{\tau}^{(2)}$ be two copies of $D_{\tau}$ colored in parallel according to algorithm $COL1(v)$ given below.
$D_\tau^{(2)}$ will mimic $COL$. Meanwhile, $D_\tau^{(1)}$ will be strictly worse (for $v$'s satisfaction), but will color $\bigcup_w B_v^-(w)$ fully randomly, and thus will be easier to analyse.
\begin{notn}
For $i \in [2]$ we extend the notation $C^+_v(t)$, $C^-_v(t), FULL^+_t,$ $FULL^-_t$, $BAD$  to $C^+_{i,v}(t)$, $C^-_{i,v}(t)$, $FULL^+_{i,t}$, $FULL^-_{i,t}$ and $BAD_{i}$
for the corresponding sets in  $D_{\tau}^{(i)}$.
\end{notn}
\begin{algorithm}[H]
\caption{COL1($v$)}
\For{ $t=1,...,\tau$}{
    let $e_t=xy$ \\
   \eIf{ $e_t \in \underset{w \in N_L^+(v)}{\bigcup}B^-_v(w) $}{
   choose a color $c$ from $[q]$ uniformly at random \\
        \eIf{ $c \in C^+_{2,x}(t-1) \cup C^-_{2,y}(t-1)$ }{ color $e_t$ in both $D_{\tau}^{(1)}$, $D_{\tau}^{(2)}$ with color $c$.}
        { color $e_t$ in  $D_{\tau}^{(1)}$  with color $c$, \\
          to color $e_t$ in $D_{\tau}^{(2)}$ execute step $t$ of $COL$, \footnotemark
          }
     }
     {to color $e_t$ in $D_{\tau}^{(2)}$ execute step $t$ of $COL$. \footnotemark[\value{footnote}] \\
      color $e_t$ in $D_{\tau}^{(1)}$ by the same color as in $D_{\tau}^{(2)}$.
}}
\end{algorithm}
\footnotetext{ Here we suppose that we run $COL$. Our current arcs $e_1,...,e_{(t-1)}$ have the colors that have been  assigned by $COL1(v)$ to the corresponding arcs  in $D_\tau^{(2)}$. We use 
 $FULL^+_{2,t}$, $FULL^-_{2,t}$ and $BAD_{2}$ in place of $FULL^+_t,$ $FULL^-_t$ and $BAD$ respectively. 
}
\begin{rem}\label{rr}
 The colorings of $D_\tau^{(2)}$ and $D_\tau$ have the same distribution.
\end{rem}
\begin{rem}\label{r12}
For every $t \in [\tau]$ and $w \in N_L^+(v)$ since the algorithm may color an arc $e_{t}=xw$ in $D_\tau^{(1)}$ and in $D_\tau^{(2)}$ with distinct colors $c$ and $c'$ respectively only in the case where $c \notin C^+_{2,x}(t-1) \cup C^-_{2,w}(t-1)$ (i.e $c\notin C^-_{2,w}(t-1)$) we have $C^-_{2,w}(t)\subseteq C^-_{1,w}(t)$.
\end{rem}
\begin{defn}\label{contr}
Fot $t\in [\tau]$ we say that $e_t \in A^+(v)$ contributes to the coloring of $v$ (or just contributes to $v$) in $D_{\tau}^{(1)}$ if
\emph{either} $C^+_{1,v}(t-1)= \emptyset$  \emph{or} $e_t$ gets a color in $C^+_{1,v}(t-1)$.
\end{defn}
\begin{lem}\label{lem11}
 Once $q$ arcs have contributed to the coloring of $v$ in $D_{\tau}^{(1)}$ we have that in $D_{\tau}^{(2)}$, $v$ has out-degree at least one in each color.
\end{lem}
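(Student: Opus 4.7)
The plan is to leverage the structural design of the coupling $COL1(v)$. The crucial observation is that $B_v^-(w) \subseteq (V_n \setminus \{v,w\}) \times \{w\}$ for every $w \in N_L^+(v)$, so no arc in $\bigcup_{w \in N_L^+(v)} B_v^-(w)$ has $v$ as its out-vertex. Consequently, whenever $e_t$ is an out-arc of $v$, the algorithm enters the else branch of $COL1(v)$ and colors $e_t$ identically in $D_\tau^{(1)}$ and $D_\tau^{(2)}$. Since the set of colors missing from $v$'s out-palette in either copy depends only on the colors of out-arcs of $v$, this yields $C^+_{1,v}(t) = C^+_{2,v}(t)$ for every $t$.

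Next, I would track $|C^+_{1,v}(t)|$ across contributions. A contributing arc $e_t \in A^+(v)$ is of one of two types: either (i) $C^+_{1,v}(t-1) \neq \emptyset$ and the assigned color lies in $C^+_{1,v}(t-1)$, in which case $|C^+_{1,v}|$ strictly decreases by one; or (ii) $C^+_{1,v}(t-1) = \emptyset$, in which case $|C^+_{1,v}|$ stays at zero. Starting from $|C^+_{1,v}(0)| = q$, no contribution of type (ii) can occur before $C^+_{1,v}$ has already been emptied, which itself requires $q$ type-(i) contributions. Hence the first $q$ contributions are all of type (i), and immediately after the $q$-th one $|C^+_{1,v}| = 0$. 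Combined with step one this gives $|C^+_{2,v}| = 0$, which is exactly the statement that $v$ has an out-arc of every color in $D_\tau^{(2)}$.

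I do not anticipate any real obstacle here: the lemma is essentially bookkeeping that exploits the coupling the authors already designed in $COL1(v)$. The only points requiring care are verifying that $A^+(v) \cap \bigcup_{w} B_v^-(w) = \emptyset$ (immediate from the definition of $B_v^-(w)$) and that only out-arcs of $v$ influence $C^+_{*,v}$ (immediate from definitions). The substantive design work has already been done upstream, so this lemma is the payoff: it justifies henceforth focusing the probabilistic analysis on $D_\tau^{(1)}$, where the randomness of colors on $\bigcup_w B_v^-(w)$ is easier to reason about.
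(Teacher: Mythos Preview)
Your argument is correct and is essentially the paper's one-line proof (``Follows directly from Definition~\ref{contr} and Remark~\ref{r12}'') spelled out in full. Your route is in fact slightly more direct: rather than invoking Remark~\ref{r12} (which is phrased for the in-palette of $w$), you observe outright that every out-arc of $v$ lies outside $\bigcup_w B_v^-(w)$ and hence passes through the else branch of $COL1(v)$, giving the equality $C^+_{1,v}(t)=C^+_{2,v}(t)$ rather than merely a containment; after that, the counting is identical.
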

\begin{proof}
Follows directly from Definition \ref{contr} and Remark \ref{r12}.
\end{proof}
The strength of Lemma \ref{lem11} is that it allows us to do the desired computations in $D_\tau^{(1)}$, for Lemmas \ref{lem12} and \ref{lem13}. 
\begin{lem}\label{lem12} 
For any $v\in V_n$, if we run the corresponding coloring algorithm $D_\tau^{(1)}$:
$$ \mathbb{P}\big( \text{less than } q \text{ arcs contribute to the coloring of $v$ in $D_{m_\ell}^{(1)}$}\big) \leq \binom{d^+(v)-1}{q-1}\bigg(\frac{100q^{q+1}}{\log n}\bigg)^{d^+(v)-q}.$$
\end{lem}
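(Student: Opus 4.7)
The plan is to translate the event ``less than $q$ arcs contribute to $v$'' into ``$v \notin FULL^+_{1,m_\ell}$.'' An arc can fail to contribute only while $v$ still has a missing color, since once $v$ is full every subsequent arc contributes by Definition~\ref{contr}. Hence the total number of contributions is at least $q$ if and only if $v$ eventually becomes full in $D^{(1)}_{m_\ell}$, and I aim to upper bound the complementary probability.

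For each out-neighbor $w \in N_L^+(v)$, I would show that arc $vw$ fails to contribute only if some color is missing among the $B^-_v(w)$ arcs that arrived before $vw$. Indeed, if all $q$ colors appear on $B_v^-(w)$ before $vw$, then $C^-_{1,w} = \emptyset$ just before $vw$, and by Remark~\ref{r12}, $C^-_{2,w} = \emptyset$ too; so the ColorGreedy call on $vw$ in $D^{(2)}$ must draw its color from $C^+_{2,v} \cup C^-_{2,w} = C^+_{2,v} = C^+_{1,v}$, forcing $vw$ to contribute. Since $B^-_v(w)$ is uniformly colored in $D^{(1)}$, and the position of $vw$ within the $(\log n/100)$-element set $\{vw\} \cup B^-_v(w)$ is uniform (by the uniform random permutation structure), a coupon-collector estimate bounds this marginal failure probability by $\frac{100q^2}{\log n} \leq \frac{100q^{q+1}}{\log n}$.

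The crucial structural observation is that the sets $\{vw\} \cup B^-_v(w)$ for distinct $w \in N_L^+(v)$ are pairwise disjoint (they have distinct endpoints $w$ and disjoint from $A^+(v)$), so in the uniformly random permutation of $E_\tau$, the restrictions to these subsets are jointly independent; combined with the independent uniform colorings of $B^-_v(w)$ in $D^{(1)}$, the events ``$vw$ fails to contribute'' are mutually independent across $w \in N_L^+(v)$. A union bound over the ``contributing'' subset of out-arcs (of size at most $q-1$), together with Lemma~\ref{lem2} giving $|A_L^+(v)| \geq d^+(v) - 1$, and casework on whether the possibly unique out-arc to a $SMALL$ vertex is itself contributing, yields the claimed estimate---the dominant term being $\binom{d^+(v)-1}{q-1}(100q^{q+1}/\log n)^{d^+(v)-q}$, corresponding to the case in which the $SMALL$ arc does not contribute.

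The main obstacle is isolating the relevant randomness from the global entanglement of $D^{(2)}$'s ColorGreedy decisions, which a priori correlate the colors of all of $v$'s out-arcs through the evolving sets $C^+_{2,v}$, $C^-_{2,w}$ and $FULL^\pm_{2,t}$. The coupling with $D^{(1)}$ through Remark~\ref{r12} (giving $C^-_{2,w} \subseteq C^-_{1,w}$) is the key device that replaces this dependent analysis with genuinely independent randomness on the disjoint $B^-_v(w)$ sets.
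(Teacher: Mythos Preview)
Your approach is essentially the same as the paper's: contain the non-contribution event in an event depending only on the position of $vw$ within $A_v^-(w)$ and the uniform colors on $B_v^-(w)$, observe these auxiliary events are independent across $w$ because the sets $A_v^-(w)$ are disjoint, and then apply a binomial tail bound. Your coupon-collector event (``some color absent among the $B_v^-(w)$ arcs preceding $vw$'') is in fact a tighter version of the paper's block event $F(w)$, giving $100q^2/\log n$ rather than $100q^{q+1}/\log n$, which is fine.

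There is one genuine slip. You assert that ``the events `$vw$ fails to contribute' are mutually independent across $w\in N_L^+(v)$''. This is false as stated: whether $vw$ contributes depends on the color COL assigns to $vw$ in $D^{(2)}$, which is a function of the entire global history (and in particular of the colors already assigned to \emph{other} out-arcs $vw'$ of $v$). What \emph{is} independent across $w$ is your coupon-collector event $G(w)$, because $G(w)$ is measurable with respect to the relative ordering of $A_v^-(w)$ and the uniform colors on $B_v^-(w)$, and these disjoint sets carry independent randomness. Since you have already shown $\{vw\text{ fails to contribute}\}\subseteq G(w)$, the binomial bound follows from the independence of the $G(w)$'s alone; just replace ``fails to contribute'' by ``$G(w)$ holds'' in that sentence. (This is exactly what the paper does, via Remark~\ref{inc}: it passes from non-contribution to $\{w\notin FULL^-_{1,h(vw)}\}$ to $F(w)$, and argues independence only for the $F(w)$'s.) A small related point: your equality $C^+_{2,v}=C^+_{1,v}$ deserves one line of justification---it holds because every out-arc of $v$ lies outside $\bigcup_w B_v^-(w)$ and is therefore colored identically in $D_\tau^{(1)}$ and $D_\tau^{(2)}$. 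Finally, the casework on the possible SMALL neighbor is unnecessary: since at most $q-1$ arcs in $A_L^+(v)$ can contribute when fewer than $q$ contribute overall, you go straight to $\mathbb{P}\big(\mathrm{Bin}(d^+(v)-1,p)\ge d^+(v)-q\big)\le\binom{d^+(v)-1}{q-1}p^{d^+(v)-q}$.
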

Before proceeding to the proof of Lemma \ref{lem12} we introduce the following two functions.
\begin{defn}
For $e\in E_\tau$ define the bijection $h:E_\tau\rightarrow [\tau]$ where $h(e)=k$ means $e=e_k$, i.e\@ $e$ was the $k$th arc to be revealed. Thus, for example, $FULL^-_{1,h(vw)}=FULL^-_{1,t'}$ where
$e_{t'}=vw$.
\end{defn}
\begin{defn}
For $w\in N_L^+(v)$ define the bijection $g_{v,w}:A^-_v(w) \rightarrow \big[\frac{\log n}{100}\big]$ where $g_{v,w}(xw)=k$ means $xw$ is the $k$th arc that was revealed out of all the arcs in $A^-_v(w)$.
\end{defn}
Also we define the following events.
\begin{defn}
For $w \in N^+_L(v)$ set $F(w)$  to be the event that in $D_{\tau}^{(1)}$ $\nexists \ell \in \mathbb{Z}_{\geq 0} $ s.t. $\ell q+q< g_{v,w}(vw)$ and  $g_{v,w}^{-1}(\ell q+1),...,g_{v,w}^{-1}(\ell q+q)$  are colored by $q$ distinct colors.
\end{defn}
\begin{rem}\label{inc}
For every $w \in N_L^+(v)$,
the event $\{ w \notin FULL_{1,h(vw)}^- \} \subseteq F(w)$.
\end{rem}
Indeed, for any $\ell \in \mathbb{Z}_{\geq 0} $ such that $\ell q+q< g_{v,w}(vw)$ the arcs $g_{v,w}^{-1}(\ell q+1),...,g_{v,w}^{-1}(\ell q+q)$ precede $vw$.
So if they were colored differently,
we would have $w \in FULL_{1,h(vw)}^-$, which is the contrapositive.
\begin{rem}
The events $\{F(w):w \in N_L^+(v)\}$ are independent.
\end{rem}
Indeed, for $w \in N^+_L(v), \P(F(w))$ depends only on the relative time $g_{v,w}(vw)$ of $vw$ among in-edges of $w$. That is because the colors that \emph{COL1(v)} assigns to the edges, $g_{v,w}^{-1}(1),g_{v,w}^{-1}(2),....,$ $g_{v,w}^{-1}\big(g_{v,w}(vw)-1\big)$, preceding $vw$ are chosen independently and uniformly at random from $[q]$. Thus in showing the independence of $\{F(w)\}$ it suffices to note that the values $\{ g_{v,w}(vw): w \in N^+_L(v)\}$ are independent, and this follows from the sets $A_v^-(w)$ being disjoint.
\vspace{3mm}
\\ \emph{Proof of Lemma} \ref{lem12}:
For $w \in N^+_L(v)$,
\begin{align}\label{above}
\begin{split}
\mathbb{P}\big( F(w)\big)
&= \sum_{k=1}^{\frac{\log n}{100}} \mathbb{P}\big( \{g_{v,w}(vw)=k\} \wedge  F(w)  \big)
= \sum_{k=1}^{\frac{\log n}{100}}\mathbb{P}\big( g_{v,w}(vw)=k\big)\mathbb{P}\big( F(w)\vert g_{v,w}(vw)=k\big)\\
& \leq \sum_{k=1}^{\frac{\log n}{100}} \frac{100}{\log n}
\prod_{l=1}^{\lfloor{ (k / q)-1} \rfloor} \left(1-\frac{1}{q^q}\right)
\leq\frac{100}{\log n} \sum_{j \in \mathbb{Z}_{\geq 0}} q \bigg(1-\frac{1}{q^q}\bigg)^j \leq \frac{100 }{\log n}q^{q+1}.
\end{split}
\end{align}
Hence,
\begin{align*}
\mathbb{P}\big(\text{less }&\text{than } q \text{ arcs contribute to the coloring of $v$ in $D_{\tau}^{(1)}$  } \big) \\
& \leq \mathbb{P}\bigg( \left\vert \left\{w \in N_L^+(v): w  \notin FULL^-_{1,g_{v,w}^{-1}(vw)}\right\}\right\vert \geq  d^+(v)-q  \bigg) \\
& \leq \mathbb{P}\bigg( \left\vert \left\{w \in N_L^+(v): \text{event }F(w)\text{ occurs }\right\}\right\vert \geq  d^+(v)-q  \bigg) \\
&\leq \mathbb{P} \Bigg(Bin \bigg(d^+(v)-1,
\frac{100q^{q+1}}{\log n}
\bigg) \geq d^+(v)-q
\Bigg)
\leq \binom{d^+(v)-1}{q-1}\bigg(\frac{100q^{q+1}}{\log n}\bigg)^{d^+(v)-q}.
\qed
\end{align*}
 The second inequality follows from Remark \ref{inc}. The last inequality follows from   the independence of  the events $\{F(w)\}$, the fact that  $\left\vert N_L^+(v) \right\vert \geq d^+(v)-1$ (see Lemma \ref{lem2}) and (\ref{above}).
\begin{rem}\label{sss}
The two basic ingredients that are used in the proof of Lemma \ref{lem12} as well as in Lemma \ref{lem13} are the following: First, for $w\in N_L^+(v)$ the sets $B_v^-(w)$ are disjoint and of size $\Omega(\log n)$.
Second, in $D_{\tau}^{(1)}$ for every $w\in N_L^+(v)$ the arcs in $B_v^-(w)$  are colored independently and uniformly at random.
 The disjointness of the sets $B^-_v(w)$ implied the independence of the events $F(w)$ while the fact their size is $\Omega(\log n)$ leads to the desired probability being sufficiently small.

\end{rem}
The following remark will be used later in the proof of Lemma \ref{full}:

\begin{rem}\label{ss}
We could  reproduce the above lemma with different parameters and similar definitions. That is we could use 
$m_1$ in place of $\tau$, $N^+_{m_1}(v)$ to be the neighbours of $v$ in $D_{m_1}$ and for $w\in N^+_{m_1}(v)$  $B^-_{m_1,v}(w)$ to be a set of arcs in $E_{m_1}$ from $V_n\backslash\{v,w\}$ to $w$ of size $\gamma \log n$ where $\gamma$ is some positive constant. In this case for every $v\in V_c$ such that the condition $|\{w \in V_n: w\in N^+(v), h(vw)<m_1 \text { and } d^+_{m_1}(w) \leq \gamma \log n\}| \leq k$ (in place of Lemma \ref{lem2}) holds, using the same methodology, we could prove that
$$\P\bigl(\text{less than $q$ arcs contribute to $v$ in } D_{m_1}^{(1)}(v)\bigl) \leq \binom{d^+_{m_1}(v)-k}{q-1}\bigg(\frac{q^{q+1}}{\gamma \log n}\bigg)^{\left(d^+_{m_1}(v)-k\right)-(q-1)}.   $$
Hence, setting $d=\min\{d^+_{m_1}(v), d^-_{m_1}(v)\}$, we have
\begin{align*}
\P\big(  v \notin FULL_{m_1}^+ \cap FULL_{m_1}^-\big)
&\leq 2  \binom{d-k}{q-1}\bigg(\frac{q^{q+1}}{\gamma \log n}\bigg)^{(d-k)-(q-1)}.
\end{align*}
\end{rem}
The bound provided by Lemma \ref{lem12} is not strong enough for vertices of small out-degree. However, it can be improved by considering some extra information,  provided by Lemma \ref{lem13}. Suppose $e_{\tau_q}=(v^*,w^*)$. Since $e_{\tau_q}$ is the last arc of our process we have that either $d^+(v^*)=q$ or $d^-(w^*)=q$. In the case that $d^+(v^*)=q$ we handle $v^*$ separately. Otherwise $d^-(w^*)=q$ and Lemma \ref{lem2} implies that 
$d^+(v^*)>\frac{\log n}{100}.$ We may assume that  $d^+(v^*)=q$ and we deal with $v^*$ separately later.
\begin{lem}\label{lem13}
Let $v\in V_n\setminus v^*$ satisfy $q \leq d^+(v) \leq  \log \log n $. Then
the probability that fewer than $ q $ arcs contribute to the coloring of $v$ in $D_\tau^{(1)}$  is bounded above by $$\frac{101(\log \log n)^5}{\log n}\binom{d^+(v)-1}{ q-2} \bigg(\frac{101q^{q+1} }{\log n}\bigg)^{d^+(v)-q+1}.$$
\end{lem}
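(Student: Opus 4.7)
My plan is to refine the argument of Lemma \ref{lem12} using two observations specific to the small-degree regime $q \leq d^+(v) \leq \log\log n$.

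First (structural): since $d^+(v) \leq \log\log n \ll \log n/100$, the vertex $v$ itself lies in $SMALL$, and Lemma \ref{lem2} then prevents any out-neighbor of $v$ from being in $SMALL$ (they would be at undirected distance one). Thus $N_L^+(v) = N^+(v)$, so $|N_L^+(v)| = d^+(v)$, one more than the weaker bound $d^+(v)-1$ used in Lemma \ref{lem12}. Consequently, ``fewer than $q$ arcs contribute'' now forces at least $d^+(v) - q + 1$ of the events $\{F(w) : w \in N_L^+(v)\}$ to occur rather than only $d^+(v) - q$, accounting for the extra power of $101 q^{q+1}/\log n$ appearing in the target bound.

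Second (first-arc): the earliest out-arc $vw_1$ of $v$ always contributes to $v$ in $D_\tau^{(1)}$. At time $h(vw_1)-1$, $v$ has no colored out-arcs, so $v \notin FULL^+_2$ and the ColorGreedy branch of $COL$ is invoked for $vw_1$ in $D_\tau^{(2)}$, regardless of phase; it samples uniformly from $C^+_{2,v} \cup C^-_{2,w_1} = [q]$. Copying this color to $D_\tau^{(1)}$, where $C^+_{1,v} = [q]$ still, certifies the contribution. Hence the required $d^+(v) - q + 1$ failures must lie among only the $d^+(v) - 1$ remaining out-arcs, which accounts for the binomial coefficient $\binom{d^+(v)-1}{q-2}$.

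The remaining factor $101(\log\log n)^5/\log n$ will come from refining $\P(F(w))$ for one distinguished non-first out-arc $v\tilde{w}$ by decomposing (\ref{above}) on the value of $g_{v,\tilde{w}}(v\tilde{w})$: the tail $g_{v,\tilde{w}}(v\tilde{w}) > (\log\log n)^5$ contributes only $(1 - q^{-q})^{(\log\log n)^5/q}$, which is super-polynomially small and absorbable, so effectively $g_{v,\tilde{w}}(v\tilde{w})$ is confined to a range of size at most $(\log\log n)^5$, giving a probability of order $100(\log\log n)^5/\log n$ for that one distinguished arc. For the remaining $d^+(v) - 2$ non-first out-arcs we apply the usual estimate $\P(F(w)) \leq 100 q^{q+1}/\log n$ together with the independence of $\{F(w) : w \in N_L^+(v)\}$, and a union over the $d^+(v) - 1$ choices of distinguished arc gives the claimed bound (with the slight constant bump from $100$ to $101$ absorbing the lower-order terms). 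The main obstacle is maintaining the independence of the remaining $F(w)$ events after singling out the distinguished arc, which follows from the disjointness of the sets $A_v^-(w)$ stressed in Remark \ref{sss}.
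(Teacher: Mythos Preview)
Your first two observations are correct and coincide with the paper's setup: since $d^+(v)\le\log\log n$ forces $v\in SMALL$, Lemma \ref{lem2} gives $N_L^+(v)=N^+(v)$, and the earliest out-arc of $v$ always contributes. However, these two facts alone only produce the bound $\binom{d^+(v)-1}{q-2}(100q^{q+1}/\log n)^{d^+(v)-q+1}$, which exceeds the target by a factor of order $\log n/(\log\log n)^5$ and is \emph{not} sufficient for the summation in Theorem \ref{tm2} (there the factors $(\log n)^{k-q+1}$ from $v_k$ and $(1/\log n)^{k-q+1}$ cancel, leaving $e^{2\omega(n)}\cdot O(1)\not\to 0$).

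Your third step, meant to recover that missing factor, has a genuine gap. Decomposing $\P(F(\tilde w))$ for a single out-arc $v\tilde w$ according to whether $g_{v,\tilde w}(v\tilde w)\le r:=(\log\log n)^5$ yields $\P(F(\tilde w))\le \P(g_{v,\tilde w}(v\tilde w)\le r)+\P(F(\tilde w)\cap\{g_{v,\tilde w}>r\})$, and the first summand is $100r/\log n=100(\log\log n)^5/\log n$, which is \emph{larger} than the baseline bound $100q^{q+1}/\log n$; no saving is obtained for that arc. If instead you try to treat $\P(g_{v,\tilde w}\le r)$ as a separate multiplicative factor and union over $\tilde w$, you would need the failure event to force some $g_{v,\tilde w}\le r$, but it does not: once you pass to the complementary case (the distinguished $\tilde w$ has $g_{v,\tilde w}>r$ and hence $F(\tilde w)$ is negligible), you still need $d^+(v)-q+1$ failures among the \emph{other} $d^+(v)-2$ non-first arcs, and for those you have no improvement over $100q^{q+1}/\log n$ each. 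The resulting term $\binom{d^+(v)-2}{q-3}(100q^{q+1}/\log n)^{d^+(v)-q+1}$ carries no $(\log\log n)^5/\log n$ prefactor and already dominates the target. A per-arc refinement cannot manufacture the extra $1/\log n$.

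The paper instead conditions on a \emph{global} event in the combined ordering $h_v$ on $\bigcup_{w}A_v^-(w)$. With $r=(\log\log n)^5$ and $r_\ell=q^q\log\log n$, let $A=\{h_v(g_v^{-1}(1))\le r\}$ be the event that some out-arc of $v$ lies among the first $r$ arcs of the global ordering. Then $\P(A)\le 100r/\log n$ supplies the prefactor, and on $A$ the first-arc contribution plus the Lemma \ref{lem12} mechanism on the remaining $d^+(v)-1$ arcs gives the binomial term. On $A^c$, the first $r$ arcs all lie in $\bigcup_w B_v^-(w)$; here the hypothesis $d^+(v)\le\log\log n$ is used essentially, via a balls-in-bins estimate (event $B$, with $\P(B)=o(1/\log^3 n)$), to conclude that w.h.p.\ \emph{every} $w$ receives at least $r_\ell$ of those $r$ arcs. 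This forces $\tilde g_{v,w,E}(vw)>r_\ell$ for all $w$ simultaneously, improving every $\P(\tilde F(w,E))$ to $O(q^q/\log^2 n)$ at once and rendering the $A^c\cap B^c$ contribution negligible. Your argument never exploits the smallness of $d^+(v)$ beyond the structural fact $N_L^+(v)=N^+(v)$, and that is precisely where the required saving lives.
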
 
In addition to the $\{g_{v,w}\}$ keeping track of the (random) relative timings of edges within each $A_v^-(w)$, we also care about the relative timings of edges within our entire subgraph $\bigcup_w A_v^-(w)$ and also within our most crucial edges $A_L^+(v)$ that we hope will contribute to $v$. We define the following two functions accordingly:
\begin{defn}
For each $v \in V_n$, let $g_{v}:A_L^+(v) \rightarrow \big[|A^+_L(v)|\big]$  map $vw \mapsto k$  whenever $vw$ is the $k$th arc revealed among $A^+_L(v)$.
Similarly define $h_v:\underset{w \in N_L^+(v)}{\bigcup}A^-_v(w) \rightarrow \left[ \frac{\log n}{100}\cdot|A^+_L(v)|\right]$.
\end{defn}
Observe that the maps $h_v(\cdot),g_v(\cdot)$ are also bijections.
\vspace{6mm}
\\ \emph{Proof of Lemma \ref{lem12}:}
Our strategy is as thus. Most of the time, we expect that none of the crucial edges in $A_L^+(v)$ appear before some time $r \ll \frac{\log n}{100}$, by which point we also expect that all $w \in N_L^+(v)$ have received a reasonable collection $1 \ll r_\ell \ll r$ of their own edges from \emph{other} vertices. It is unlikely that either of these heuristics fail (see bounds on $\mathbb{P}(A)$ and $\mathbb{P}(B)$ in Cases 1 and 2 below), and when they are correct (Case 3), all the $w$'s become measurably more likely to have become $FULL$ by the time edge $vw$ appears.
Specifically, with $r_\ell= q^{q}\log \log n$ and $r=(\log \log n)^5$ we define the events $A$ and $B$:
\begin{itemize}
\item{Let $A$  be the event $\{h_v(g_v^{-1}(1)) \leq r \}$;
 i.e.\@ the first arc of $A_L^+(v)$ precedes the $(r+1)$st of $\underset{w \in N_L^+(v)}{\bigcup}A^-_v(w)$.}
\item { Let $B$ be the event $\{
\exists w \in N_L^+(v):
 h_v(g_{v,w}^{-1}(r_\ell))>r+1 
\}$; i.e.\@ for some $w \in N_L^+(v)$, less than $r_\ell$ arcs in $A^-_v(w)$ are revealed before the $(r+1)$st arc of $\underset{w \in N_L^+(v)}{\bigcup}A^-_v(w)$.} 
\end{itemize}
We condition on whether $A$, $A^c \cap B$, or $A^c \cap B^c$ occurs. In each case we use the same methodology as in Lemma \ref{lem12} to bound the desired probability.
Observe  that Lemma \ref{lem2} implies, as $d_{\tau}^+(v) \leq  \log \log n $, that $v$ has no out-neighbour in $SMALL_{\tau}$, hence $N^+(v)=N^+_L(v)$. Furthermore note that in any of the events $A$, $A^c \cap B$ and $A^c \cap B^c$ the first arc that appears with out-vertex $v$ contributes to the colouring of $v$. Since $N^+(v)=N^+_L(v)$ that arc belongs to $A_L^+(v)$.
\vspace{3mm}
\\
\textbullet \hspace{1mm} Case 1: 
$A$ occurs.
We describe the possible offending sequences leading up to the early first edge of $A_L^+(v)$ as follows.
\vspace{3mm}
\\Set $\mathcal{E}_1=\Bigg\{(f_1,...f_s)\in \Bigg(\underset{w\in A_L^+(v)}{\bigcup} B^-_v(w)\Bigg)^{s-1}\times A_L^+(v) : s\leq r\text{ and }  f_1,...,f_s \text{ are distinct}\Bigg\}.$
\\For $E=(f_1,...f_s) \in \mathcal{E}_1$ we set $f_E:=f_s$ and define $A_E$ to be the event where both: 
\begin{itemize}
\item $f_s$ is the first arc to be revealed from $A_L(v)$, and
\item $f_1,...,f_{s-1}$ are the only arcs in  $\underset{w\in A_L^+(v)}{\bigcup} B^-_v(w)$ to be revealed before $f_s$.
\end{itemize}
Consequently the events $A_E$ partition $A$.
 We furthermore define the set $A^-_{v,E}(w)$, the function $g_{v,w,E}(vw)$ and the event $F(w,E)$ as follows.  We  set  $A_{v,E}^-(w)$ to be a subset of $A_{v}^-(w)\backslash E$ of size $\frac{\log n}{100}-r$ and we define  the map $g_{v,w,E}:A^-_{v,E}(w) \rightarrow \big[\frac{\log n}{100}-r\big]$ given by the relation $g_{v,w,E}(xw)=k$ where $xw$ is the $k$th arc that was revealed out of the arcs in $A^-_{v,E}(w)$.
In addition we set $F(w,E)$  to be the event that $A_E$ occurs and that in $D_{\tau}^{(1)}$ $\nexists \ell \in \mathbb{Z}_{\geq 0} $ s.t. $\ell q+q< g_{v,w,E}(vw)$ and  $g_{v,w,E}^{-1}(\ell q+1),...,g_{v,w,E}^{-1}(\ell q+q)$  are colored by $q$ distinct colors.
\\For $E \in \mathcal{E}_1$, suppose we condition on $A_E$. By using the same tools as in Lemma \ref{lem12} with $A^-_{v,E}(\cdot)$, $g_{v,\cdot,E}(v\cdot)$ and $F(\cdot,E)$ in  place of $A_{v}(\cdot)$, $g_{v,\cdot}(v\cdot)$ and $F(\cdot)$ respectively, we have that for $w \in N_L^+(v)\backslash\{v^*\}$ where $vv^*=f_E$ the events $F(w,E)$ occur  independently  with probability at most $\frac{q^{q+1} }{\frac{\log n}{100}-r}$.
On the other hand $f_E$ contributes to the the coloring of $v$ with probability 1. Therefore, as the events $A_E$ partition $A$, the probability that fewer than $q$ arcs contribute to the coloring of $v$ in $D_{\tau}^{(1)}$ \emph{conditioned on the event $A$} is bounded above  by
$$\mathbb{P}\Bigg(Bin\bigg(d^+(v)-1, \frac{q^{q+1} }{\frac{\log n}{100}-r}\bigg) \geq [d^+(v)-1]-(q-2)\Bigg).$$
As $\mathbb{E}\big[|A_L^+(v) \cap \{h_v^{-1}(1), h_v^{-1}(2),..., h_v^{-1}(r)\}|\big] = \frac{r}{|A_L^+(v)|\frac{\log n}{100}} \cdot |A_L^+(v)|$, Markov's inequality gives
$$\mathbb{P}(A) = \mathbb{P}\big(|A_L^+(v) \cap \{h_v^{-1}(1), h_v^{-1}(2),..., h_v^{-1}(r)\}| \geq 1\big) \leq \frac{100r}{\log n}.$$
\\\textbullet \hspace{1mm} Case 2: The event $A^c\cap B$ occurs.
\\Set $\mathcal{E}_2=\Bigg\{(f_1,...f_r)\in \Bigg(\underset{w\in A_L^+(v)}{\bigcup} B^-_v(w)\Bigg)^r: f_1,...,f_r \text{ are distinct and  } |\{f_1,...,f_r\}\cap A_v^-(w)|< r_\ell$ for some $ w \in N_L^+(v) \Bigg\}$. Henceforth we can proceed as in Case 1 but without using  the guaranteed contribution of the first arc in $A^+_L(v)$.
Thus, conditioned on the event $A^c\cap B$, the probability that fewer than $q$ arcs contribute to the coloring of $v$ in $D_{\tau}^{(1)}$ is bounded above  by
$$\mathbb{P}\Bigg(Bin\bigg(d^+(v), \frac{q^{q+1} }{\frac{\log n}{100}-r}\bigg) \geq d^+(v)-(q-1)\Bigg).$$
Furthermore,
\begin{align*}
\mathbb{P}(A^c \cap B) &\leq \mathbb{P}(B) \leq d^+(v)\sum_{i=0}^{r_\ell-1} \binom{\frac{\log n}{100}}{i} \binom{(d^+(v)-1)\frac{\log n}{100}}{r-i}\bigg/  \binom{d^+(v)\frac{\log n}{100}}{r} \\
&=d^+(v)\sum_{i=0}^{r_\ell-1} \binom{\frac{\log n}{100}}{i}  \binom{(d^+(v)-1)\frac{\log n}{100}}{r-i}
\binom{r}{ r-i}\bigg/  \binom{d^+(v)\frac{\log n}{100}}{r-i}  \binom{d^+(v)\frac{\log n}{100}-r+i}{i}\\
&\leq d^+(v)\sum_{i=0}^{r_\ell-1} r^i  \binom{(d^+(v)-1)\frac{\log n}{100}}{r-i}   \bigg/  \binom{d^+(v)\frac{\log n}{100}}{r-i}\\
&\leq d^+(v)\sum_{i=0}^{r_\ell-1} r^i  \prod_{j=0}^{r-i-1}\frac{(d^+(v)-1)\frac{\log n}{100}-j}{ d^+(v)\frac{\log n}{100}-j}
\leq d^+(v)\sum_{i=0}^{r_\ell-1} r^i\bigg(\frac{(d^+(v)-1)\frac{\log n}{100}}{ d^+(v)\frac{\log n}{100}}\bigg)^{r-r_l}\\
& \leq d^+(v)\cdot r^{r_\ell} \cdot \exp\bigg\{-\frac{r-r_l}{d^+(v)}\bigg\}\\
&\leq \exp\bigg\{ \log\big(d^+(v)\big)+q^{q}\log\log n\cdot 5\log(\log\log n) -0.4(\log\log n)^4 \bigg\}=o\bigg(\frac{1}{\log ^3n}\bigg).
\end{align*}
To get from the second to the third line we are using the fact that $d^+(v)\geq 2$. Furthermore at the last inequality we use that  $d^+(v) \leq \log \log n$.
\vspace{3mm}
\\\textbullet \hspace{1mm} Case 3: The event $A^c\cap B^c$ occurs.
\vspace{3mm}
\\Set $\mathcal{E}_3=\Bigg\{(f_1,...f_r) \in \Bigg(\underset{w\in A_L^+(v)}{\bigcup} B^-_v(w)\Bigg)^r: f_1,...,f_r$ are distinct and for every   $ w \in N_L^+(v)$ we have that $|\left\{f_1,...,f_r\right\}\cap B_v^-(w)|\geq r_\ell \Bigg\}$.
For $E \in \mathcal{E}_3$ we let
 $A_E$  be the event that for all $i \in [r],$ $f_i $ is the $i$-th edge that is revealed from $\underset{w\in A_L^+(v)}{\bigcup} A^-_v(w)$. Consequently we have that the events $A_E$  partition the event $A^c\cap B^c$. 
 Furthermore for $E=(f_1,...f_r) \in \mathcal{E}_3$ and $w\in N^+_L(v)$ we set $\tilde{A}_{v,E}^-(w)$ to be a subset of $A_{v}^-(w)$ of size $\frac{\log n}{100}-r+r_\ell$ such that  $|\tilde{A}_{v,E}^-(w)\cap \{e_1,...,e_r\}|=r_\ell$ and define the map $\tilde{g}_{v,w,E}:\tilde{A}^-_{v,E}(w) \mapsto \big[\frac{\log n}{100}-r+r_\ell\big]$  and the event $\tilde{F}(w,E)$  correspondingly. Note that for $w \in N_L^+(v)$ and for $E \in \mathcal{E}_3$ since $A_E \subseteq A^c\cap B^c$ we have that  $\tilde{g}_{v,w,E}(vw)>r_\ell$. Thus, as in the proof of Lemma \ref{lem12} for any $E \in \mathcal{E}_3$ and  $w \in N^+_L(v)$ we have,
\begin{align*}
\mathbb{P}\big( \tilde{F}(w,E)\vert A_E \big)
&= \sum_{k=r_\ell+1}^{\frac{\log n}{100}-r+r_\ell} \mathbb{P}\big( \tilde{g}_{v,w,E}(vw)=k \wedge  \tilde{F}(w,E) \vert A_E  \big) \\
&= \sum_{k=r_\ell+1}^{\frac{\log n}{100}-r+r_\ell}\mathbb{P}\big( \tilde{g}_{v,w,E}(vw)=k\vert A_E \big) \mathbb{P}\bigg( \tilde{F}(w,E)\vert \tilde{g}_{v,w}(vw)=k \wedge A_E\bigg)\\
& \leq \sum_{k=r_\ell}^{\frac{\log n}{100}} \frac{1}{\frac{\log n}{100}-r} \bigg(1-\frac{1}{q^q}\bigg)^{\lfloor{ k / q} \rfloor}
 \leq  \sum_{j \in \mathbb{N}} \frac{100}{\log n-100r} \bigg(1-\frac{1}{q^q}\bigg)^{\lfloor r_\ell \rfloor} \bigg(1-\frac{1}{q^q}\bigg)^j \\
& \leq \sum_{j \in \mathbb{N}} \frac{101}{\log n} \cdot \exp\bigg({-\frac{1}{q^q}\cdot{\lfloor q^{q}\log \log n \rfloor}}\bigg) \cdot \bigg(1-\frac{1}{q^q}\bigg)^j  \leq  \frac{101eq^q }{\log^{2} n}.
\end{align*}
Once more, for fixed $E \in \mathcal{E}_3$, conditioned on $A_E$ the events $F(w,E)$ are independent (as in case 1). Furthermore the events $A_E$ for   $E \in \mathcal{E}_3$ partition $A^c \cap B^c$. Hence, conditioned on the occurrence of event $A^c \cap B^c $  the probability that less than $ q $ arcs contribute to the coloring of $v$ in $ D_\tau^{(1)}$ is bounded  by
$$P\Bigg(Bin\bigg(d^+(v), \frac{101eq^q}{\log^{2}n}\bigg) \geq d^+(v)-(q-1)\Bigg).$$
Finally, by conditioning on the occurrence of event $A$ or $A^c \cap B $ or $A^c \cap B^c$ we get that for a vertex $v$ in $D_{\tau}^{(1)}$  satisfying $q \leq d^+(v) \leq  \log \log n $ we have,
\begin{align*}
\mathbb{P}\big( &\text{fewer than } q \text{ arcs contribute to the coloring of $v$ in } D_{\tau}^{(1)}\big)\\
& \leq \mathbb{P}\Bigg(Bin\bigg(d^+(v)-1, \frac{100q^{q+1} }{\log n-100(\log \log n)^5}\bigg) \geq [d^+(v)-1]-(q-2)\Bigg) \frac{100(\log \log n)^5}{\log n}\\
&  + \mathbb{P}\Bigg(Bin\bigg(d^+(v), \frac{100q^{q+1} }{\log n-100(\log \log n)^5}\bigg) \geq d^+(v)-q+1\Bigg) \frac{1}{\log^3 n}\\
&  + \mathbb{P}\Bigg(Bin\bigg(d^+(v), \frac{101eq^q}{\log^{2}n}\bigg) \geq d^+(v)-q+1\Bigg) \\
& \leq \frac{101(\log \log n)^5}{\log n}\binom{d^+(v)-1}{ q-2 } \bigg(\frac{101q^{q+1} }{\log n}\bigg)^{d^+(v)-q+1}. \qed
\end{align*}

\begin{lem}\label{new}
Let $e_{t_q}=(v^*,w^*)$ be such that $d^+(v^*)=q$.
Then probability that fewer than $ q $ arcs contribute to the coloring of $v^*$ in $D_\tau^{(1)}$  is bounded above by 
$$\frac{101(\log \log n)^5}{\log n}\binom{d^+(v)-1}{ q-2} \bigg(\frac{101q^{q+1} }{\log n}\bigg)^{d^+(v)-q+1}.$$
\end{lem}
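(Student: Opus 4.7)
The strategy is to re-run the three-case analysis of Lemma \ref{lem13} for $v^*$, while carefully accounting for the conditioning $e_\tau = v^* w^*$. Since $q$ is fixed, for large $n$ we have $q \leq d^+(v^*) = q \leq \log \log n$, so Lemma \ref{lem2} applies and gives $N^+(v^*) = N_L^+(v^*)$ with $|A_L^+(v^*)| = q$. The event $e_\tau = v^* w^*$ is equivalent to fixing the two positions $g_{v^*}(v^*w^*) = q$ and $g_{v^*, w^*}(v^*w^*) = \frac{\log n}{100}$ at their maximal values; conditionally on this, the relative order of every other arc in $A_L^+(v^*) \cup \bigcup_{w} A_{v^*}^-(w)$ is still uniform, and $COL1(v^*)$ still assigns i.i.d.\ uniform colors in $[q]$ to every arc in $\bigcup_w B_{v^*}^-(w)$.

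I then mimic the proof of Lemma \ref{lem13} step by step, with the same parameters $r = (\log\log n)^5$ and $r_\ell = q^q \log\log n$ and the same events $A$ and $B$. The Markov estimate $\P(A) \le \frac{100r}{\log n}$ still holds with $|A_L^+(v^*)|$ replaced by $q-1$ (the number of arcs of $A_L^+(v^*)$ whose position is still random) and the denominator replaced by $q\cdot\frac{\log n}{100}-1$; the hypergeometric bound on $\P(B)$ goes through verbatim; and in each of the three cases the probability that fewer than $q$ arcs contribute to $v^*$ reduces to a product of independent events $F(w,E)$ over $w \in N_L^+(v^*)$. For every $w \neq w^*$ the bound on $\P(F(w,E))$ is identical to Lemma \ref{lem13}, because $F(w,E)$ depends only on the ordering inside $A_{v^*}^-(w)$ and on the colors assigned inside $B_{v^*}^-(w)$, both of which are untouched by the conditioning on $w^*$'s arcs. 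For $w = w^*$, the conditioning pushes $g_{v^*, w^*}(v^*w^*)$ and its variant $\tilde g_{v^*, w^*, E}(v^*w^*)$ to their maximum values, and since $\P(F(w^*,E)\mid g_{v^*,w^*}(v^*w^*)=k)$ is a decreasing function of $k$, the bound from Lemma \ref{lem13} only improves.

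The main obstacle is verifying that, under the extra conditioning $e_\tau = v^*w^*$, the collection $\{F(w, E) : w \in N_L^+(v^*)\}$ remains jointly independent, with the factor for $w^*$ still dominated by the same quantity as for the other $w$'s. As highlighted in Remark \ref{sss}, this independence rests on the disjointness of the sets $A_{v^*}^-(w)$ together with the fully random coloring of $\bigcup_w B_{v^*}^-(w)$ built into $COL1(v^*)$; the extra conditioning restricts the sample space only via $A_{v^*}^-(w^*)$, so events at distinct $w$'s remain mutually independent and $F(w^*, E)$ itself has its probability only decreased. Substituting $d^+(v^*) = q$ into the resulting product bound and summing the three cases gives exactly the bound stated in the lemma.
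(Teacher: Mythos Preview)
Your proposal is correct and, in fact, proves the bound exactly as stated. The paper, however, takes a much shorter route: it does not rerun the three-case analysis of Lemma~\ref{lem13} at all. Instead it simply observes that the conditioning $e_\tau=v^*w^*$ forces $g_{v^*,w^*}(v^*w^*)=\tfrac{\log n}{100}$, whence $\mathbb{P}(F(w^*))\le(1-q^{-q})^{\lfloor\log n/(100q)\rfloor}\le\tfrac{100q^{q+1}}{\log n}$. For the remaining $q-1$ neighbours $w\neq w^*$ the bound $\mathbb{P}(F(w))\le\tfrac{100q^{q+1}}{\log n}$ from Lemma~\ref{lem12} carries over unchanged, and a plain union bound over the $q$ events $F(w)$ yields $\mathbb{P}(\text{fewer than }q\text{ contribute})\le q\cdot\tfrac{100q^{q+1}}{\log n}$.

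Note, though, that this simpler argument only delivers a bound of order $1/\log n$, whereas the displayed bound in the lemma (with $d^+(v^*)=q$) is $\tfrac{101^2(q-1)q^{q+1}(\log\log n)^5}{\log^2 n}$, of order $(\log\log n)^5/\log^2 n$. The final inequality in the paper's proof, asserting $q\cdot\tfrac{100q^{q+1}}{\log n}\le\tfrac{101(\log\log n)^5}{\log n}\binom{q-1}{q-2}\tfrac{101q^{q+1}}{\log n}$, is therefore false for large $n$. Your longer argument genuinely earns the stated bound and hence lets $v^*$ be absorbed seamlessly into the $k=q$ term of the sum in Theorem~\ref{tm2}; the paper's shorter argument produces a weaker bound which nonetheless suffices for Theorem~\ref{tm2} because $v^*$ is a single vertex and $\tfrac{100q^{q+2}}{\log n}=o(1)$ can simply be added on separately.
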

\begin{proof}
As seen in the proof of Lemma \ref{lem12} every arc out of $v^*$ except $(v^*w^*)$ contributes to the coloring of $v$ with probability  $=\frac{100q^{q+1}}{\log n}$. Thereafter since $g_{v,w}(v^*w^*)=\frac{\log n}{100}$ the first line of \eqref{above} gives as 
$$ \mathbb{P}(F(w^*))
\leq   \prod_{l=1}^{\lfloor{ \frac{\log n}{100} / q} \rfloor} \left(1-\frac{1}{q^q}\right) 
\leq  \left(1-\frac{1}{q^q}\right)^{\frac{\log n}{100q}} 
\leq e^{-\frac{\log n}{100q^{q+1}}}\leq  \frac{100q^{q+1}}{\log n}.
$$
Therefore the probability that fewer than $ q $ arcs contribute to the coloring of $v^*$ in $D_\tau^{(1)}$  is bounded above by $q\cdot \frac{100q^{q+1}}{\log n}\leq \frac{101(\log \log n)^5}{\log n}\binom{d^+(v)-1}{ q-2} \bigg(\frac{101q^{q+1} }{\log n}\bigg)^{d^+(v)-q+1}.$

\end{proof}
\vspace{3mm}
{\bf Proof of Theorem \ref{tm2}:}  We say  $COL$ fails if once the last edge has been revealed, there exist a vertex $v \in V$ and a color  $c\in [q]$ such that the in- or out-degree of $v$ in color $c$ is 0. Observed that conditioned on the almost sure event $\{m_\ell\leq \tau \}$ Lemma \ref{lem1} implies that for all $k\in[q,  {3\log n}\backslash {\log \log n}]$ the number of vertices of degree at most $k$ is at most $v_k = e^{2\omega(n)}(\log n)^{k-q+1}/ (k-1)!$.  Thus from Lemmas \ref{lem11}, \ref{lem12}, \ref{lem13} and Remark \ref{rr}, by implicitly conditioning on the event  $\{m_\ell\leq \tau \}$ and Lemma \ref{lem1},  we have
\begin{align*}
\mathbb{P}(&\text{COL}\text{ fails}) \leq 2 \mathbb{P}\big( \exists v \in D_{m_\ell} \text{ such that less than } q \text{ arcs contribute to the coloring of $v$ in } D_{m_\ell}^{(1)}\big) \\
&\leq 2\sum_{k=\frac{3\log n}{\log \log n }}^{n} n \cdot \binom{k-1}{q-1}\bigg(\frac{100q^{q+1}}{\log n}\bigg)^{k-q}
+2 \sum_{k=\log \log n +1 }^{\frac{3\log n}{\log \log n }}v_k\cdot  \binom{k-1}{q-1}\bigg(\frac{100q^{q+1}}{\log n}\bigg)^{k-q} \\
&+2\sum_{k=q }^{{\log \log n }} v_k \cdot \frac{101(\log \log n)^5}{\log n} \binom{k-1}{ q-2} \bigg(\frac{101q^{q+1} }{\log n}\bigg)^{k-q+1}\\
&\leq 2\sum_{k=\frac{3\log n}{\log \log n }}^{n} n\cdot {k}^q \bigg(\frac{100q^{q+1}}{\log n}\bigg)^{k-q}
+ 2\sum_{k=\log \log n +1 }^{\frac{3\log n}{\log \log n }}\frac{e^{2\omega(n)}(\log n)^{k-q+1}}{(k-1)!} \cdot    \binom{k-1}{q-1}\bigg(\frac{100q^{q+1}}{\log n}\bigg)^{k-q} \\
&+2\sum_{k=q }^{{\log \log n }}\frac{e^{2\omega(n)}(\log n)^{k-q+1}}{(k-1)!} \cdot \frac{101(\log \log n)^5}{\log n}q\binom{k-1}{ q-1} \bigg(\frac{101q^{q+1} }{\log n}\bigg)^{k-q+1} \\
&\leq 2\sum_{k=\frac{3\log n}{\log \log n }}^{n} \frac{1}{n^2}
+ 2\sum_{k=\log \log n +1 }^{\frac{3\log n}{\log \log n }}\frac{e^{2\omega(n)} \log n}{(k-q)!} (100q^{q+1})^{k-q} \\
&+2\frac{101^2q^{q+2}(\log \log n)^5\cdot e^{2\omega(n)}}{\log n}\Bigg[\sum_{k=q+1+202eq^{q+1}}^{\log\log n}\frac{(101q^{q+1})^{k-q}}{(k-q)!}+
\sum_{k=q }^{{q+202eq^{q+1} }}\frac{(101q^{q+1})^{k-q}}{(k-q)!}
\Bigg]\\
&\leq \frac{2}{n}+ 2\log^2 n\sum_{k=\log \log n +1 }^{\frac{3\log n}{\log \log n }}\bigg(\frac{100q^{q+1}e}{k-q}\bigg)^{k-q} \\
&+\frac{C_1(\log \log n)^6}{\log n}\Bigg[\sum_{k=q+1+202eq^{q+1}}^{\log\log n}\bigg(\frac{101q^{q+1}e}{(k-q)}\bigg)^{k-q}+
C_2   \Bigg] \\
&\leq \frac{2}{n}+
 2 \log^2 n \cdot \frac{3\log n}{\log \log n } \bigg(\frac{100q^{q+1}e}{\log \log n-q}\bigg)^{\log \log n-q}
+ O\bigg(\frac{(\log \log n)^6}{\log n}\bigg)=o(1),
\end{align*}
for some sufficiently large constants $C_1=C_1(q)$ and $C_2=C_2(q)$ depending only on $q$. \hfill\({\Box}\)

\section{Finding Hamilton cycles - Overview }
We may now proceed to show that w.h.p.\@ for every color $c \in [q]$, $COL$ succeeds in assigning color $c$ to every edge in some Hamilton cycle in $D_\tau$. We set $D_c'$ to be the subgraph of $D_\tau$ induced by the edges of color $c$. We start by constructing a minor  $D_c$ of $D_c'$. To do so we first remove some arcs and then applying contractions to arcs adjacent to vertices in $BAD$. By doing the contractions we  hide  the vertices in $BAD$ while the arc removal ensures that any Hamilton cycle in $D_c$ also yields a Hamilton cycle in $D_c'$.

We organize the rest of the proof as follows.
We first deal with \emph{Phase 1} which takes place in our original setting.
We then give a reduction of Theorem \ref{maintm} to Lemma \ref{reduction}. 
Finally we explicitly describe \emph{Phases 2 \& 3} and use them to prove Lemma \ref{reduction}. 
\emph{Phases 2 \& 3} take place in the more general setting of Lemma \ref{reduction}.

During \emph{Phase 1} we use out-arcs and in-arcs that have been revealed during the time intervals $(m_1,m_2]$ and $(m_2,m_3]$ respectively in order to show that w.h.p.\@ there exists a matching in $D_c$ consisting of at most $2\log n $ cycles spanned by $E_{m_3}$. By matching we refer to a complete matching i.e.\@ some $M \subseteq  V_c \times V_c \backslash \{(v,v):v \in V_c\} $ where every vertex has in- and out-degree exactly 1.

Thereafter, we randomly partition $E'=E^2\cup E^3$. In \emph{Phase 2}, we attempt to sequentially  join any two cycles  found in the current matching, starting with the matching above, to a single one. We join the cycles by a straightforward two-arc exchange,
where arcs $vw,xy$ in two distinct cycles are rerouted via $vy,xw$
if the latter two are in $E^2$ (illustrated at Figure 2). We show that once this is no longer possible, we are left with a large cycle consisting of $n-o(n)$ vertices of $D_\tau$.

Finally, during \emph{Phase 3} using arcs found in $E^3$, we sequentially try to merge the smaller cycles with the largest one. To merge two cycles here we start by finding an arc in $E^3$ joining them. This creates a dipath spanning the vertices of the two cycles. Afterwards, we grow the set of dipaths using ``double rotations", or sequences of two-arc exchanges that maintain a dipath on the same vertex set. (More specifically, for a dipath $P=(p_1,p_2,...,p_s)$, suppose $p_sp_k, p_{k-1}p_l \in E^3$ with $k < l$. Then a double rotation, illustrated at Figure 2, using those two arcs   replaces $P$ with the dipath  $P'=(p_1,p_2,...,p_{k-1},p_l,p_{l+1},...,p_s,p_k,p_{k+1},...,p_{l-1})$.)
By performing sequences of double rotations we find  $\Omega(n)$ paths with a common starting vertex but \emph{distinct} endpoints. With this many paths we succeed in closing one of them (joining the end-vertex to the start-vertex by an arc) with probability at least $1-o(n^{-\epsilon})$ for some $\epsilon >0$. Hence we may join all $(\leq 2 \log n)$ cycles inherited from \emph{Phase 2}.

\begin{figure}[!h]\label{pp}
\centering
\begin{tikzpicture}[yscale=1]

\draw[->] (2.78,1.2) arc (15:178:0.8);
\draw[->] (1.2,1) arc (180:345:0.8);
\draw[->][fill] (2.78,1.2) circle [radius=0.4mm];
\draw[->][fill] (2.78,0.85) circle [radius=0.4mm];
\draw[->,dotted,red](2.78,0.87)--(2.78,1.18);

\draw[->] (4.8,1) arc (0:165:0.8);
\draw[->] (3.22,0.85) arc (195:358:0.8);
\draw[->][fill] (3.22,0.86) circle [radius=0.4mm];
\draw[->][fill] (3.22,1.18) circle [radius=0.4mm];
\draw[->,dotted,red](3.22,1.18)--(3.22,0.86);

\draw[->,thick,blue](2.8,0.85)--(3.2,0.86);
\draw[->,thick,blue](3.2,1.18)--(2.8,1.18);

\draw node at (2.55,0.8) {$v$};
\draw node at (2.55,1.2) {$w$};
\draw node at (3.5,1.2) {$x$};
\draw node at (3.5,0.8) {$y$};

\draw[fill](6,0.8) circle [radius=0.4mm];
\draw[fill](6.4,0.8) circle [radius=0.4mm];
\draw[fill](6.8,0.8) circle [radius=0.4mm];
\draw[fill](7.2,0.8) circle [radius=0.4mm];
\draw[fill](7.6,0.8) circle [radius=0.4mm];
\draw[fill](8,0.8) circle [radius=0.4mm];
\draw[fill](8.4,0.8) circle [radius=0.4mm];
\draw[fill](8.8,0.8) circle [radius=0.4mm];
\draw[fill](9.2,0.8) circle [radius=0.4mm];
\draw[fill](9.6,0.8) circle [radius=0.4mm];
\draw[fill](10,0.8) circle [radius=0.4mm];
\draw[fill](10.4,0.8) circle [radius=0.4mm];
\draw[fill](10.8,0.8) circle [radius=0.4mm];

\draw[->,thick](6.0,0.8)--(6.38,0.8);
\draw[->,thick](6.4,0.8)--(6.78,0.8);
\draw[->,dotted,thick,red](6.8,0.8)--(7.18,0.8);
\draw[->,thick](7.2,0.8)--(7.58,0.8);
\draw[->,thick](7.6,0.8)--(7.98,0.8);
\draw[->,thick](8.0,0.8)--(8.38,0.8);
\draw[->,thick](8.4,0.8)--(8.78,0.8);
\draw[->,thick](8.8,0.8)--(9.18,0.8);
\draw[->,thick,dotted,red](9.2,0.8)--(9.58,0.8);
\draw[->,thick](9.6,0.8)--(9.98,0.8);
\draw[->,thick](10.0,0.8)--(10.38,0.8);
\draw[->,thick](10.4,0.8)--(10.78,0.8);

\draw [->, thick, blue] (6.8,0.8) to [out=300,in=240] (9.58,0.78);
\draw [->, thick, blue] (10.8,0.8) to [out=120,in=60] (7.22,0.82);

\node[below] at (7.2,0.8) {$p_{k}$};
\node[below] at (9.6,0.8) {$p_{l}$};
\node[below] at (10.8,0.8) {$p_s$};

\end{tikzpicture}
\caption{Left-Merging two cycles (\emph{Phase 2}),
Right-Double rotation (\emph{Phase 3}).}
\end{figure}
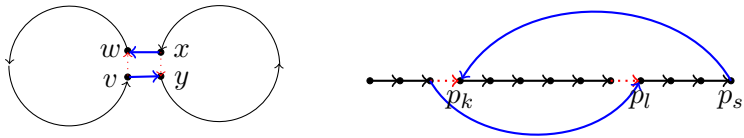

\section{Construction of $D_c$}\label{s.Dcalg}
Let $D_c'$ be the graph induced by the arcs of color $c$, $BAD=\{z_1,z_2,...,z_b\}$ where for some $s \leq b $ we have that $SMALL \cap BAD=\{z_1,z_2,...z_s\}$. $D_c$ is set to be the graph that we get after applying the following algorithm to $D_c'$.
We aim to thread all $BAD$ vertices, one at a time, into disjoint directed paths (we will later contract) with neither endpoint in $BAD$. We achieve this by dynamically keeping track of all potential starting vertices $V^+$ and potential ending vertices $V^-$ of these paths. It is likely that some $BAD$ vertices will have been used as endpoints of paths for other $BAD$ vertices before they had their turn-see the ``if'' clause below-but, in this case, we only need to extend the path in a single direction. 

\begin{algorithm}[H]
\caption{HideBad}
{$V^+$:$=V_n$, $V^-$:$=V_n$, $E_{contr}$:$=\emptyset.$\\}
\For{ $\ell=1,2,...,s$}{
Let $j,k \in [n]$ be minimal such that $v_j\in V^+, v_k \in V^-$ and $v_jz_\ell,z_\ell v_k \in E(D_c')$
\\$V^+ \leftarrow V^+ \backslash \{z_\ell,v_j\}, V^-\leftarrow V^- \backslash \{ z_\ell, v_k\}, E_{contr}\leftarrow E_{contr} \cup \{v_jz_\ell, z_\ell v_k\}.$
}
\For{ $\ell=s+1,s+2,...,b$}{
\uIf{ $z_\ell \notin V^+$ }
{ Let $j \in [n]$ be the minimum such that $v_j \in V^+$ and $v_jz_\ell  \in E(D_c')$ 
 \\$V^+ \leftarrow V^+ \backslash\{v_j\}, V^-\leftarrow V^- \backslash \{ z_\ell\}, E_{contr}\leftarrow E_{contr} \cup \{v_jz_\ell\}.$}
\uElseIf{ $z_\ell \notin V^-$ }
{ Let $k \in [n]$ be the minimum such that $v_k \in V^-$ and $z_\ell v_k \in E(D_c')$ 
   \\$V^+ \leftarrow V^+ \backslash \{z_\ell\}, V^-\leftarrow V^- \backslash \{v_k\}, E_{contr}\leftarrow E_{contr} \cup \{z_\ell v_k\}.$}
\Else{
Let $j,k \in [n]$ be minimal such that $v_j\in V^+, v_k \in V^-$ and $v_jz_\ell,z_\ell v_k \in E(D_c')$
\\$V^+ \leftarrow V^+ \backslash \{z_\ell,v_j\}, V^-\leftarrow V^- \backslash \{ z_\ell, v_k\}, E_{contr}\leftarrow E_{contr} \cup \{v_jz_l, z_\ell v_k\}.$
}}
Delete all arcs $xy$ in $E(D_c')\backslash E_{contr}$ such that $x \notin V^+$ or $y \notin V^-$.
\\Contract all edges in $E_{contr}$ and let $D_c$ be the resultant graph.
\end{algorithm}

It should not be obvious at this stage that we can always perform this algorithm so greedily, as one could feasibly run out of potential out- or in-neighbours of a given $z_\ell \in BAD$ at some late stage, all taken up by earlier $BAD$ vertices. We will devote the rest of this section to showing (Theorem \ref{hterm}) this is unlikely to be a problem (after reassuring ourselves that Hamiltonicity is preserved under these contractions in Lemma \ref{11ham}).

\begin{rem}
At each step of the algorithm $x\in V_n$ is removed from $V^+$ (similarly from $V^-$) iff for some $y \in V_n$ the arc $xy$ ($yx$ respectively) is added to $E_{contr}$.
\end{rem}

\begin{notn}
Henceforth we denote by $V_c$ the vertex set of $D_c$.
\end{notn}
\begin{defn}
For $v \in V_c$   set $contr(v)$:=$\{u \in V(D_c')$\@:$ \text{ $u$ gets contracted to $v$}\}$. Furthermore set $v^+$ and $v^-$ to be the unique elements found in  $contr(v) \cap V^+$ and  $contr(v) \cap V^-$ respectively.
\end{defn}
\begin{rem}\label{nbad}
Every $v \in V_c$ has both $v^+,v^- \notin BAD$. 
Furthermore $V^* := V\setminus(BAD\cup N(BAD))\subseteq V^+,V^-$.
\end{rem}

\begin{lem}\label{11cor}
For $u,v \in V_c$ we have that $uv \in E(D_c)  \Leftrightarrow u^+v^-\in E(D_c')$.
\end{lem}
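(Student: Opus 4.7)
The plan is to prove both directions of the equivalence by carefully tracking the behavior of the HideBad algorithm, and in particular its effect on $V^+, V^-$ and $E_{contr}$.

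First I would establish one preliminary observation about $E_{contr}$: whenever the algorithm appends an arc $xy$ to $E_{contr}$, the tail $x$ is simultaneously removed from $V^+$ and the head $y$ is simultaneously removed from $V^-$. Inspecting each of the four branches in HideBad confirms this (for instance, in the first loop the arcs $v_j z_\ell$ and $z_\ell v_k$ are added to $E_{contr}$ precisely while $v_j, z_\ell$ are removed from $V^+$ and $z_\ell, v_k$ are removed from $V^-$). Consequently, at termination every $xy \in E_{contr}$ satisfies $x \notin V^+$ and $y \notin V^-$, since these sets shrink monotonically.

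For the forward direction ($\Rightarrow$), I would argue as follows. If $uv \in E(D_c)$ with $u \neq v$, then $uv$ is the image under contraction of some arc $xy \in E(D_c')$ with $x \in contr(u)$ and $y \in contr(v)$. Since contraction identifies the endpoints of arcs in $E_{contr}$ into a common vertex, an arc of $E_{contr}$ could only give rise to a self-loop at some $u$; hence $xy \notin E_{contr}$. But then $xy$ survived the final deletion step, which forces $x \in V^+$ and $y \in V^-$. Using the uniqueness asserted by the definition of $u^+$ (as the unique element of $contr(u) \cap V^+$) and $v^-$, I conclude $x = u^+$ and $y = v^-$, so $u^+v^- \in E(D_c')$.

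For the backward direction ($\Leftarrow$), suppose $u^+ v^- \in E(D_c')$. By the preliminary observation, any arc of $E_{contr}$ has its tail outside $V^+$; since $u^+ \in V^+$, the arc $u^+ v^-$ is not in $E_{contr}$. It is therefore subject to the deletion step, but since $u^+ \in V^+$ and $v^- \in V^-$ it is not deleted. After contraction, $u^+$ is identified with $u$ and $v^-$ with $v$, so the surviving arc $u^+ v^-$ becomes $uv \in E(D_c)$.

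There is no real obstacle here beyond bookkeeping; the only subtle point is ruling out the case $xy \in E_{contr}$ in the forward direction, which I would dispose of by noting that a contracted arc cannot straddle two distinct classes $contr(u)$ and $contr(v)$. The uniqueness of $v^+$ and $v^-$ (tacitly already used in their definitions) is immediate from the fact that each vertex can be removed from $V^+$ or $V^-$ at most once during the algorithm.
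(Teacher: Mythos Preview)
Your proposal is correct and follows essentially the same approach as the paper: both rest on the observation that an arc $xy\in E(D_c')$ is removed or contracted by \emph{HideBad} precisely when $x\notin V^+$ or $y\notin V^-$, and both then use the uniqueness of $u^+$ and $v^-$ within $contr(u)$ and $contr(v)$ to pin down the surviving arc. The only cosmetic difference is that you split the key observation into two pieces (arcs in $E_{contr}$ versus arcs subject to the deletion step), whereas the paper states it as a single biconditional.
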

\begin{proof}
Observe that $xy \in E(D_c')$ was removed or contracted iff after the last iteration of \emph{HideBad} $x\notin V^+$ or $y \notin V^-$. Let $u,v \in V_c$ be such that $u^+v^-\in E(D_c')$. Then since $u^+\in V^+$ and $u^-\in V^-$, from the observation follows that  $u^+v^-$ was not removed or contracted. In addition  $u^+v^-$ is identified with $uv$ after the contractions, hence $uv \in E(D_c)$.  Let $a,b \in V_c$ be such that  $ab \in E(D_c)$ so certainly $a \neq b$. $ab$  originated from an edge in $\big(contr(a)\times contr(b)\big) \cap E(D_c')$ and since any edge in  $\big(contr(a)\times contr(b)\big)\backslash \{a^+b^-\}$ was either contracted or removed  it must be the case that $ u^+v^-\in E(D_c')$.
\end{proof}
\begin{lem}\label{11ham}
If there exists a Hamilton cycle in $D_c$ then there exists a Hamilton cycle in $D_c'$.
\end{lem}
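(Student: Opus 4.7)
My plan is to ``unfold'' any Hamilton cycle $\mathcal{H}$ of $D_c$ back into a Hamilton cycle of $D_c'$ by replacing each $v \in V_c$ with the directed $v^-$-to-$v^+$ path living inside $contr(v)$. To justify this, I would first establish that $E_{contr}$ is a vertex-disjoint union of directed paths in $D_c'$, where each component is a $v^-$-to-$v^+$ directed path for exactly one $v \in V_c$; equivalently, the sets $\{contr(v) : v \in V_c\}$ partition $V(D_c')$ and each one is the vertex set of a directed $v^-$-to-$v^+$ path in $D_c'$ using only edges of $E_{contr}$. I would prove this by induction on the iterations of \emph{HideBad}: each iteration either adds two new arcs $v_j \to z_\ell \to v_k$ forming a fresh length-$2$ path, or extends an existing component at its unique $V^+$-endpoint or its unique $V^-$-endpoint (when $z_\ell$ already belongs to some component). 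The crucial invariant to track is that the unique $V^+$-vertex of each component is the tail of no arc in $E_{contr}$ and its unique $V^-$-vertex is the head of no arc in $E_{contr}$; this is preserved because whenever an arc is added into (resp. out of) a vertex, that vertex is immediately removed from $V^-$ (resp. $V^+$), so it can never again serve as the head (resp. tail) of a new arc of $E_{contr}$.

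Given this structural picture, constructing a Hamilton cycle of $D_c'$ from $\mathcal{H} = v_1 \to v_2 \to \cdots \to v_N \to v_1$ is routine: for each $i$ in cyclic order, traverse the internal directed path from $v_i^-$ to $v_i^+$ inside $contr(v_i)$ (using edges of $E_{contr}\subseteq E(D_c')$), then follow the arc $v_i^+ \to v_{i+1}^-$, which lies in $E(D_c')$ by Lemma \ref{11cor} applied to $v_i v_{i+1} \in E(D_c)$. Since the sets $contr(v_i)$ partition $V(D_c')$ and each internal path visits its block exactly once, the resulting closed walk is a simple directed cycle through every vertex of $D_c'$, i.e. a Hamilton cycle of $D_c'$.

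The main (mild) obstacle is the structural claim above: one must carefully verify that every component of $E_{contr}$ is a directed \emph{path} (not a union of paths or a branching structure) whose endpoints are precisely the vertices later labelled $v^+$ and $v^-$. Once this invariant is secured by the induction described, the ``unfolding'' of $\mathcal{H}$ produces a Hamilton cycle of $D_c'$ with no further work.
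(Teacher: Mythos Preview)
Your proposal is correct and follows essentially the same approach as the paper: define the $v^-$-to-$v^+$ dipath $P(v)$ inside each $contr(v)$, then replace each vertex of the Hamilton cycle in $D_c$ by its path $P(v)$ and join consecutive paths via Lemma~\ref{11cor}. In fact you are more careful than the paper, which simply asserts the existence of $P(u)$ without justification, whereas you outline the inductive argument over the iterations of \emph{HideBad} that actually establishes the path structure of $E_{contr}$.
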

\begin{proof}
For $u \in V_c$ define  $P(u)$  to be the dipath in $D_c'$ that contains all the vertices in $contr(u)$, starts at $u^-$, ends with $u^+$ and uses all the arcs in $E_{contr}$ that are spanned by $contr(u)$ (in the case that $\vert contr(u)\vert =1$, $P(u)$ is a single vertex i.e.\@ a dipath of length 0).  Now suppose $v_{\pi(1)},v_{\pi(1)}v_{\pi(2)},v_{\pi(2)},$ $...,v_{\pi(n_c)},v_{\pi(n_c)}v_{\pi(1)},v_{\pi(1)}$ is a  Hamilton cycle in $D_c$ then, we have that $P(v_{\pi(1)}),v_{\pi(1)}^+v_{\pi(2)}^- ,$ $P(v_{\pi(2)}),...,$ $P(v_{\pi(n_c)}),$   $v_{\pi(n_c)}^+v_{\pi(1)}^-,P(v_{\pi(1)}^-)$ is a Hamilton cycle in $D_c'$. To see this, first note that $P(v_{\pi(i)})$ starts with $v_{\pi(i)}^-$ and ends with
$v_{\pi(i)}^+$. Moreover $v_{\pi(i)}v_{\pi(i+1)}\in E(D_c)$ implies, by Lemma \ref{11cor}, that $v_{\pi(i)}^+v_{\pi(i+1)}^-\in E(D_c')$. Finally, since the sets $contr(v)$ partition $V_n$, each vertex in $V_n$ appears exactly in one of the dipaths $P(u)$.
\end{proof}
\begin{tm}\label{hterm}
W.h.p. the algorithm $HideBad$ terminates.
\end{tm}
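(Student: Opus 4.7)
The plan is to show by induction on $\ell$ that $HideBad$ never gets stuck, i.e.\ that at every iteration the required vertex $v_j\in V^+$ with $v_jz_\ell\in E(D_c')$ and vertex $v_k\in V^-$ with $z_\ell v_k\in E(D_c')$ can be found. Since reversing all arcs swaps the roles of $v_j$ and $v_k$, I will focus on $v_j$. A preliminary step bounds the pool of potential obstructions: by Remark \ref{nbad}, any vertex ever removed from $V^+$ lies in $BAD\cup N_{D_\tau}(BAD)$. Each of $B_1^\pm,B_2^+,B_3^-$ collects vertices whose degree growth on a window of length $\Theta(e^{-q\cdot 10^4}n\log n)$ is at most $\epsilon\log n=e^{-q\cdot 10^6}\log n$, a vanishingly small fraction of the expected value $\Theta(e^{-q\cdot 10^4}\log n)$. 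A Chernoff bound gives $\Pr(v\in BAD)\leq n^{-c}$ for some $c=c(q)>0$, so w.h.p.\ $|BAD|=O(n^{1-c})$, and combined with Lemma \ref{maxdegree}, $|BAD\cup N_{D_\tau}(BAD)|=O(n^{1-c}\log n)=o(n)$.

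For $z_\ell\in SMALL\cap BAD$ (the first loop), Lemma \ref{lem2} gives that distinct SMALL vertices are at undirected distance $\geq 2$ in $D_\tau$. Therefore the in-neighbors of $z_\ell$ in $D_c'\subseteq D_\tau$ neither lie in SMALL nor share an in-neighborhood with any $z_k\in SMALL$ processed earlier, so none of them have been removed from $V^+$ at step $\ell$. Since $d^-(z_\ell,c)\geq 1$ by Theorem \ref{tm2}, a valid $v_j$ exists, and the symmetric argument yields $v_k$.

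For $z_\ell\in BAD\setminus SMALL$ (the second loop), $d^\pm(z_\ell)\geq \log n/100$, while a Chernoff bound on $d^\pm_{m_3}(v)$ (whose mean is $\Theta(e^{-q\cdot 10^4}\log n)$) gives $d^\pm_{m_3}(z_\ell)=O(e^{-q\cdot 10^4}\log n)$ uniformly in $v$ w.h.p. So at least $\log n/200$ arcs incident to $z_\ell$ arrive during $(m_3,\tau]$, and $COL$ colors each such arc to minimize the current deficit $d^\pm_t(z_\ell,c)$; they therefore distribute almost evenly among the $q$ colors, giving $d^\pm(z_\ell,c)=\Omega(\log n/q)$. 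It remains to find at least one of these $\Omega(\log n/q)$ in-neighbors of $z_\ell$ outside $BAD\cup N(BAD)$. A coupling argument in the spirit of $COL1(v)$ from Section 4 shows that, after conditioning on $D_\tau$ and on $BAD$, the $D_c'$-in-neighbors of $z_\ell$ among the post-$m_3$ arcs are essentially a uniform random subset of $N^-_{D_\tau}(z_\ell)$; since $|BAD\cup N(BAD)|=o(n)$ while $|N^-_{D_\tau}(z_\ell)|=O(\log n)$, a first-moment/union-bound argument over $z_\ell\in BAD\setminus SMALL$ (of size $O(n^{1-c})$) makes failure occur with probability $o(1)$.

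The main obstacle is this final step: one must justify the effective uniformity of the $D_c'$-in-neighbors of $z_\ell$ despite the subtle dependencies that $COL$ injects through its balancing rule on $BAD$ vertices. A coupling with a fully random coloring of post-$m_3$ arcs, analogous to the one between $D_\tau$ and $D_\tau^{(1)}$ used in the proof of Theorem \ref{tm2}, should strip out these dependencies, after which a direct concentration and union bound will finish the theorem.
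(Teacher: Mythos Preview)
Your handling of the $SMALL$ case is essentially the paper's Case~1 argument, and it is fine. The argument for $z_\ell\in BAD\setminus SMALL$, however, has a fatal gap.

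You propose to find a colour-$c$ in-neighbour of $z_\ell$ lying outside $BAD\cup N(BAD)$, on the grounds that this set is globally small. But every in-neighbour of $z_\ell$ is adjacent to $z_\ell\in BAD$ and therefore lies in $N(BAD)$ by definition; the set you are trying to hit is empty. Consequently the global bound $|BAD\cup N(BAD)|=o(n)$ is useless here, and no amount of coupling or randomization of the colour-$c$ in-neighbours can rescue the step, since you are sampling from $N^-_{D_\tau}(z_\ell)\subseteq N(BAD)$. (The coupling itself is also dubious: for arcs $xz_\ell$ with $z_\ell\in BAD$ arriving after $m_3$, $COL$ assigns the colour minimizing $d^-_t(z_\ell,c)$, which is deterministic given the arrival order rather than uniform.)

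What is actually needed is a \emph{local} bound: only those $z_{\ell'}$ with $z_{\ell'}\in N(N(z_\ell))$ can cause an in-neighbour of $z_\ell$ to be removed from $V^+$, and each such $z_{\ell'}$ removes at most two vertices. The paper establishes precisely this via Lemma~\ref{Badapart}, which shows $|BAD\cap N(N(v))|\le 4e^{q\cdot 10^5}$ for every $v$. Combined with Lemma~\ref{neib} (that $z_\ell$ has at least $\log\log n$ colour-$c$ in-neighbours in $V_n\setminus BAD$), this gives that at most a constant number of these $\ge\log\log n$ in-neighbours can have been removed from $V^+$, so one survives. Your sketch of the lower bound $d^\pm(z_\ell,c)=\Omega(\log n/q)$ also implicitly needs Lemma~\ref{Badapart}, since arcs to $BAD$ neighbours of $z_\ell$ are not colored by the simple balancing rule you invoke.
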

The proof of Theorem \ref{hterm} will follow from Lemmas \ref{Badapart} and \ref{neib} proven in this section. To state and prove these we will need the following definitions.
\begin{defn}
For $v \in V_n$, let $N(v)$:=$\{u \in V_n: d_\tau'(u,v)=1\}$ (i.e those vertices whose undirected distance from $v$ is one). Similarly set $N(N(v))$:=$\{u \in V_n: d_\tau'(u,v) \in \{1,2\} \}$.
\end{defn}
\begin{rem}\label{b}
All  three sets of edges that appear at times found in $(0,m_1], (m_1,m_2]$ and $(m_2,m_3]$ respectively are distributed as the edges of $D_{n,m_1}$. Hence, by additionally taking into account the symmetry between in- and out- arcs in $D_{n,m_1}$,  the sets $B_1^+, B^-_1, B_2^+$ and $B^-_3$ (defined during the execution of \emph{COL}) follow the same distribution.
\end{rem}
\begin{lem}\label{Badapart}
 W.h.p.\@ for all $v \in V_n$  we have that $|BAD \cap N(N(v))| \leq 4e^{q\cdot 10^5}$.
\end{lem}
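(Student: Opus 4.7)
The plan is a first-moment argument over ``witnessed'' configurations. Set $K := e^{q\cdot 10^5}$. Since $BAD = B_1^+ \cup B_1^- \cup B_2^+ \cup B_3^-$ and by Remark \ref{b} the four pieces share a common distribution, it suffices to prove $|B \cap N(N(v))| \leq K$ w.h.p.\ for all $v$ and for a fixed one, say $B = B_1^+$; a union bound over the four pieces then yields the claimed $4K$.

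First we control the single-vertex probability. Since $\mathbb{E}[d_{m_1}^+(u)] = m_1/n = e^{-q\cdot 10^4}\log n$ whereas $u \in B_1^+$ forces $d_{m_1}^+(u) \leq \epsilon\log n = e^{-q\cdot 10^6}\log n$, a ratio that is an exponentially small fraction of the mean, passing to $D_{n,p_1}$ via (2) and applying the Chernoff lower-tail bound (essentially as in Lemma \ref{lem1}) gives
\[
\rho := \mathbb{P}(u \in B_1^+) \leq n^{-c + o(1)}, \qquad c := e^{-q\cdot 10^4}.
\]
Now we enumerate witnessed bad configurations: if $|B_1^+ \cap N(N(v))| > K$ then one can choose distinct vertices $u_0,\ldots,u_K \in B_1^+$ and ``witness'' vertices $w_0,\ldots,w_K$ (allowing $w_i = u_i$ when $u_i \in N(v)$) such that both of the undirected edges $vw_i$ and $w_iu_i$ lie in $D_\tau$. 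We will union-bound over configurations $(v,(u_i),(w_i)) \in V_n^{2K+3}$, of which there are $\leq n^{2K+3}$.

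The decisive step is the decoupling. Let $S = \{v\} \cup \bigcup_i\{u_i,w_i\}$, a fixed set of size $\leq 2K+3$. The structural event ``every witness edge exists in $D_\tau$'' only concerns edges inside $S$, while the monotone-decreasing event ``each $u_i$ has at most $\epsilon\log n$ out-edges to $V_n \setminus S$ in $D_{m_1}$'' implies $u_i \in B_1^+$ and only concerns edges from the $u_i$'s to $V_n \setminus S$. These two sets of edges are disjoint, so after passing to the product models $D_{n,p_u}, D_{n,p_1}$ (via (2), at cost of constant factors) the two events become genuinely independent and each factorizes:
\[
\mathbb{P}(\text{configuration realized}) \leq O(1)\cdot (2p_u)^{2(K+1)}\cdot \rho^{K+1}.
\]
Multiplying by the $\leq n^{2K+3}$ choices of configuration and using $p_u = O(\log n / n)$, the expected number of bad configurations is at most
\[
(\log n)^{O(K)}\cdot n^{\,1 - c(K+1)(1-o(1))} = o(1),
\]
because $c(K+1) \geq e^{q(10^5 - 10^4)}$ dwarfs any polynomial correction. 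Markov's inequality then finishes the proof. The main obstacle is precisely the decoupling argument just described: naively the BAD and structural events both depend on edges incident to the $u_i$, but restricting the BAD event to out-edges landing \emph{outside} $S$ decouples them at negligible cost, and the enormous margin $cK = e^{q\cdot 9\cdot 10^4}$ easily absorbs the constant factors from converting between $D_{n,m}$ and $D_{n,p}$ and the combinatorial losses from overcounting coincident witnesses.
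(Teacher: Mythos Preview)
Your overall strategy---union bound over witnessed configurations, separate the ``structural'' edge event from the ``all $u_i$ lie in $B_1^+$'' event, and exploit the enormous margin $cK\approx e^{9q\cdot 10^4}$---matches the paper's approach. But the decoupling step, which you rightly flag as the crux, does not go through as written. Relation~(2) compares a \emph{single} monotone property in $D_{n,m}$ to the same property in $D_{n,p}$; it says nothing about a \emph{joint} event that depends simultaneously on $D_{m_1}$ and $D_\tau$ (or $D_{m_u}$). You cannot transfer the structural event to $D_{n,p_u}$ and the BAD event to $D_{n,p_1}$ separately via~(2) and then declare them independent: the two $D_{n,m}$'s are nested, not independent, and in fact the two events (one decreasing in the arrival times of $E_1$, the other increasing in the arrival times of $E_2$) are \emph{positively} correlated under the permutation model by negative association of ranks---so the product bound is the wrong direction without further work. (A minor related slip: the implication runs $\{u_i\in B_1^+\}\subseteq\{\text{restricted BAD}\}$, not the reverse as you wrote.)

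The fix is easy given your setup, and is essentially what the paper does: condition on the positions of the $O(K)$ structural edges. The paper sums over $E\subseteq E(S)$, writing $\mathbb P(S_{m_1}(E))\cdot\mathbb P(S_{m_1,\tau}(E)\mid S_{m_1}(E))\cdot\mathbb P(B\subseteq B_1^+\mid\cdots)$ and bounding the last factor by a direct hypergeometric/counting estimate in the $D_{n,m_1}$ model. In your framework you could instead condition on the exact rank set $T=\sigma^{-1}(E_1)\subseteq[m_u]$: given $T$, the remaining edges are a uniform bijection to $[N]\setminus T$, and since $|T|=O(1)$ the hypergeometric parameters for each restricted BAD event change only by $O(1/m_1)$, so each still has probability $n^{-c+o(1)}$; then negative association of indicators in the uniform-subset model lets you multiply across the disjoint sets $E_2^{(i)}$. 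Either route closes the gap, but the bare invocation of~(2) does not.
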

\begin{proof}
Let $k=e^{q\cdot10^5}$ and suppose $\vert BAD\cap N(N(v)) \vert>4k$ for some $v\in V_n$. Then there is some digraph $S\subseteq D_\tau$ with $V(S)=\{v,b_1,...,b_k,w_1,...,w_l\}$ for some $l\leq k$ satisfying the following . For some  $i\leq k$ all of the vertices
$b_1,...,b_{i},w_1,...,w_l$ are connected to $v$ by  arcs $e_1,...,e_{i+l}$ and for $i< j \leq k$,  $b_j$ is connected to some $v_j \in \{b_1,...,b_{i},w_1,...,w_l\}$ by the arc $e_{j+l}$. Furthermore there is some $B^*\in \{B_1^+, B_1^-, B_2^+, B_3^-\}$ such that $B=\{b_1,...,b_k\}\subseteq B^*$. Suppose $B^*=B^+_1$. By setting for $E \subseteq  E(S)$ the events
$S_{m_1}(E)$:=$\{E(S)\cap E_{m_1}=E\}$ and $S_{m_1,\tau}(E)$:=$\{E(S)\backslash E \subseteq E_\tau\backslash E_{m_1}\}$ we have,
\begin{align}\label{mmm}
\begin{split}
L=\mathbb{P}&\big(\{S \subseteq D_\tau\} \wedge \{B \subseteq B_1^+\}\big)
=\sum_{E \subseteq  E(S)}\mathbb{P}\big(S_{m_1}(E) \wedge S_{{m_1},\tau}(E) \wedge \{B\subseteq B_1^+\}\big) \\
&=\sum_{E \subseteq  E(S)}\mathbb{P}\big(S_{m_1}(E)\big)\cdot \mathbb{P}\big(S_{{m_1},\tau}(E) \big\vert S_{m_1}(E) \big) \cdot \mathbb{P}\big(B\subseteq B_1^+ \big\vert S_{m_1}(E) \wedge (S_{{m_1},\tau}(E) \big).
\end{split}
\end{align}
For fixed $E \subseteq  E(S)$ (1) implies that,
\begin{align*}
    \mathbb{P}\big(S_{m_1}(E)\big) \leq 10\sqrt{m_1} p_1^{|E|}(1-p_1)^{\vert E(S)\backslash E \vert } \leq np_1^{|E|} 
    \leq n \bigg(\frac{\log n}{n} \bigg)^{|E|}.
\end{align*}
Furthemore,
\begin{align*}
     \mathbb{P}\big(S_{{m_1},\tau}(E) \big\vert S_{m_1}(E) \big)&= \frac{
     \binom{n(n-1)-m_1-\vert E(S)\backslash E \vert}{\tau-m_1-\vert E(S)\backslash E \vert}}{\binom{n(n-1) - m_1} {\tau-m_1}}
     =\frac{\binom{\tau-m_1}{|E(S)\backslash E|}}{\binom{n(n-1)-m_1}{|E(S)\backslash E|}} 
     =\prod_{i=0}^{|E(S)\backslash E|-1}\frac{\tau-m_1-i}{n(n-1)-m_1-i}\\
     &\leq \bigg(\frac{\tau-m_1}{n(n-1)-m_1}\bigg)^{|E(S)\backslash E|}
     \leq \bigg( \frac{2n\log n}{n^2}\bigg)^{|E(S)\backslash E|}.
\end{align*}
Finally, in order to bound  $\mathbb{P}\big(B\subseteq  B_1^+ \big\vert S_{m_1}(E) \wedge (S_{{m_1},\tau}(E) \big)$ from above note the following. There are $\binom{n(n-1)-|E(S)|}{m_1-|E|}$ ways to pick $E_{m_1}\backslash E$ so that it can be extended to a chain $E_{m_1}\backslash E \subseteq  E_{m_1}\backslash E_\tau$ such that $E_{m_1}$ and $E_\tau$ satisfy both the events  $S_{m_1}(E)$ and $S_{{m_1},\tau}(E)$. Given $S_{m_1}(E)$ and $S_{{m_1},\tau}(E)$ occur $E_{m_1}\backslash E$ is equally likely to be any of those   
$\binom{n(n-1)-|E(S)|}{m_1-|E|}$ choices.  
Moreover, if $B \subseteq  B_1^+$ then every vertex in $B$ has at most $\epsilon\log n$ out-arcs in $E_{m_1}$. Hence there are at most $f=\epsilon |B| \log n=\epsilon k \log n$ arcs in $E_{m_1}\backslash E$ with out-vertex in $B$ (i.e.\@ from the set $\{bv: b\in B, v\in V_n \text{ and } v\neq b\}$). Thus,

\begin{align*}
     \mathbb{P}&\big(B\subseteq  B_1^+ \big\vert S_{m_1}(E) \wedge S_{{m_1},\tau}(E) \big)
     \leq \frac{\underset{j=0}{\overset{f}{\sum}}
\binom{k(n-1)}{j}\binom{n(n-1)-k(n-1)}{m_1-\vert E\vert-j}}{\binom{n(n-1)-|E(S)|}{m_1-|E|}}
\leq \frac{f
\binom{k(n-1)}{f}\binom{n(n-1)-k(n-1)}{m_1-\vert E\vert-f}}{\binom{n(n-1)-|E(S)|}{m_1-|E|}}
\\
&\leq f \binom{k(n-1)}{f} \frac{(m_1-|E|)!}{(m_1-|E|-f)!}
\frac{\frac{[n(n-1)-k(n-1)]!}{[n(n-1)-k(n-1)-m_1+|E|+f]!}}{\frac{\big(\underset{j=0}{\overset{ f-1 }{\prod}}
{n(n-1)-|E(S)|-m_1+|E|+f-j}\big)
[n(n-1)-|E(S)|]!
}{[n(n-1)-|E(S)|-m_1+|E|+f]!}}
\\
&    \leq f
\bigg(\frac{ekn}{f}\bigg)^f
\prod_{j=0}^{f-1}\frac{m_1-|E|-j}{n(n-1)-|E(S)|-m_1+|E|+f-j}
\prod_{j=0}^{m_1-|E|-f-1} \frac{ n(n-1)-k(n-1)-j}{n(n-1)-|E(S)|-j}\\
&      \leq  f
\bigg(\frac{ekn}{f}\bigg)^f
\bigg(\frac{m_1}{0.9n^2}\bigg)^f
\cdot\prod_{j=0}^{m_1-|E|-f-1} \frac{ n(n-1)-k(n-1)}{n(n-1)-|E(S)|}\\
&      \leq  f
\bigg(\frac{ekm_1}{0.9fn}\bigg)^f
\exp\bigg\{-\frac{k(n-1)-|E(S)|}{n(n-1)} \cdot (m_1-|E(S)|-f-1)  \bigg\} \\
&\leq \epsilon k \log n 
\bigg(\frac{ekm_1}{0.9\epsilon k n \log n}\bigg)^{\epsilon k \log n}
\exp\bigg\{-\frac{0.8km_1}{n}        \bigg\} \leq \epsilon k \log n
\bigg(\frac{1}{\epsilon  }\bigg)^{\frac{m_1}{n}}
\exp\bigg\{-\frac{0.8km_1}{n}        \bigg\}
\\
& \leq \epsilon k \log n\cdot \exp\bigg\{[-\log(\epsilon )-0.8k]\frac{m_1}{n}  \bigg\} 
\leq \exp\bigg\{ -0.7 k\cdot \frac{m_1}{n} \bigg\} 
\\&
\leq \exp\bigg\{ -0.7 e^{q\cdot 10^5} e^{-q\cdot10^4}\log n \bigg\}
\leq \exp\bigg\{ - e^{8.9q\cdot 10^4}\log n \bigg\}
.
\end{align*}

The 2nd inequality follows from the fact that $\
\binom{k(n-1)}{j}\binom{n(n-1)-k(n-1)}{m_1-\vert E\vert-j}$
is increasing for $j \in [1, f]$.
Thus, using the upper bounds found for the quantities on the right hand side of (\ref{mmm}) we obtain
\begin{align*}
L & \leq\sum_{E \subseteq  E(S)} n \bigg(\frac{\log n}{n}\bigg)^{|E|}
\cdot\bigg( \frac{2\log n}{n}\bigg)^{|E(S)\backslash E|}   \cdot \exp\bigg\{ - e^{8.9q\cdot 10^4}\log n \bigg\}  \\
& \leq \sum_{E \subseteq  E(S)}  \bigg(\frac{\log n}{n}\bigg)^{|E(S)|}\exp\bigg\{-e^{8.8q\cdot 10^4} \log n\bigg\} 
\leq \bigg(\frac{\log n}{n}\bigg)^{|E(S)|}\exp\bigg\{{-e^{8q\cdot 10^4} \log n}\bigg\}.
\end{align*}
For fixed $l, k $ there are exactly $n\binom{n-1}{k}\binom{n-1-k}{l}$ ways to choose the vertices of $S$, or equivalently, disjoint sets  $\{v\}, \{b_1,...,b_k\}$ and $\{w_1,...,w_l\}$ from $V_n$. Thereafter there are at most  $2^{l+k}\sum_{i=0}^{k}\binom{k}{i} (i+l)^{k-i}$  choices for its directed edges.
Taking into account Remark \ref{b} and that $l\leq k = e^{q\cdot 10^4}$, union bound gives us
\begin{align*}
\mathbb{P}\big(\exists v& \in V_n:\vert BAD \cap N(N(v)) |> 4e^{q\cdot 10^5}  \big)\\
&\leq \mathbb{P}\big( \exists v\in V_n \text{ and } (i,*) \in \{(1,+),(1,-),(2,+),(3,-)\}:B^*_i \cap N(N(v)) > e^{q\cdot 10^5} \big)\\
&\leq 4\sum_{l=0}^{k}n\binom{n-1}{k}\binom{n-1-k}{l}2^{k+l}\sum_{i=0}^{k}\binom{k}{i} (i+l)^{k-i}\bigg(\frac{\log n}{n}\bigg)^{l+k}\exp\bigg\{{-e^{8q\cdot 10^4} \log n}\bigg\}\\
&\leq 4\sum_{l=0}^{k} n^{l+k+1}\bigg(\frac{\log n}{n}\bigg)^{l+k}\exp\bigg\{{-e^{7q\cdot 10^4} \log n}\bigg\}=o(n^{-2}).\qedhere
\end{align*}
\end{proof}

\begin{lem}\label{full}
W.h.p.\@ for every  $u \notin BAD$ we have that $u \in FULL_{m_1}^+ \cap FULL_{m_1}^-$.
\end{lem}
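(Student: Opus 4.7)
The plan is to apply Remark~\ref{ss} with $\gamma=\epsilon=e^{-q\cdot 10^6}$ to each vertex $u\notin BAD$ and take a union bound. Recall that the remark yields
\[
\P\bigl(v\notin FULL_{m_1}^+\cap FULL_{m_1}^-\bigr)\leq 2\binom{d-k}{q-1}\left(\frac{q^{q+1}}{\gamma\log n}\right)^{(d-k)-(q-1)},
\]
where $d=\min\{d^+_{m_1}(v),d^-_{m_1}(v)\}$ and $k$ is an upper bound on the number of neighbors $w$ of $v$ (appearing by time $m_1$) whose degree in the relevant direction at time $m_1$ is at most $\gamma\log n$. The task is to extract useful values of $d$ and $k$ from the hypothesis $u\notin BAD$.

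I would first argue that $k$ may be taken to be an absolute constant depending only on $q$. With the choice $\gamma=\epsilon$, any neighbor $w$ of $u$ whose out- or in-degree at time $m_1$ is at most $\epsilon\log n$ necessarily lies in $B_1^+\cup B_1^-\subseteq BAD$. Hence it suffices to take $k=|N(u)\cap BAD|$, and by Lemma~\ref{Badapart} this satisfies $|N(u)\cap BAD|\leq|BAD\cap N(N(u))|\leq 4e^{q\cdot 10^5}$ w.h.p.\@, uniformly over all $u\in V_n$. Simultaneously, $u\notin BAD$ certainly gives $u\notin B_1^+\cup B_1^-$, so $d>\epsilon\log n$.

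Substituting these bounds into Remark~\ref{ss}, the failure probability for each fixed $u\notin BAD$ is at most
\[
2(\log n)^{q-1}\left(\frac{q^{q+1}}{\epsilon\log n}\right)^{\epsilon\log n-O_q(1)},
\]
which is $n^{-\omega(1)}$ since $q^{q+1}/(\epsilon\log n)=o(1)$. A union bound over the at most $n$ choices of $u\notin BAD$ finishes the proof. The only non-routine step is identifying $k$ with a constant: this is exactly what the matching between small-degree neighbors and $BAD$-neighbors (afforded by the choice $\gamma=\epsilon$) delivers, after which the remainder is a direct substitution. Consequently the whole argument amounts to carefully stitching together Lemma~\ref{Badapart} and Remark~\ref{ss}, with no novel probabilistic ingredient required.
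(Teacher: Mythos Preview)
Your proof is correct and follows essentially the same approach as the paper: both identify $k=4e^{q\cdot 10^5}$ via Lemma~\ref{Badapart} (you add the explicit observation that low-degree neighbours at time $m_1$ lie in $B_1^+\cup B_1^-\subseteq BAD$), use $u\notin BAD$ to force $d\geq\epsilon\log n$, and then plug into Remark~\ref{ss} with $\gamma=\epsilon$ before taking a union bound. The only cosmetic difference is that the paper bounds $\binom{d-k}{q-1}$ crudely by $n^{q-1}$ whereas you write $(\log n)^{q-1}$, but either suffices since the exponential factor is $n^{-\omega(1)}$.
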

\begin{proof}
With $k=4e^{q\cdot10^5}$
Lemma \ref{Badapart} implies that w.h.p.\@ for every $u\in V_n$ we have $\vert \{ w \in V_n: w\in N^+(u),$ $h(uw)<m_1$ and $d^+_{m_1}(w) \leq \epsilon \log n\}\vert  \leq k$. Hence as $u \notin BAD$ implies that $d=\min \{d_{m_1}^+(u),d_{m_1}^-(u)\}  \geq \epsilon\log n$ from
 Remark \ref{ss}, with $\gamma=\epsilon$ it follows that
\begin{align*}
\mathbb{P}\big(\exists u \notin BAD \text{ s.t.\@ } u \notin FULL_{m_1}^+ \cap FULL_{m_1}^-\big)
&\leq 2n \underset{\epsilon \log n \leq d \leq n}{\max} \bigg\{\binom{d-k}{q-1}\bigg(\frac{q^{q+1}}{\epsilon \log n}\bigg)^{(d-k)-(q-1)}\bigg\}\\
&\leq 2n n^{q-1} \bigg(\frac{q^{q+1}}{\epsilon\log n}\bigg)^{0.5\epsilon \log n}=o(1).\qedhere
\end{align*}
\end{proof}
\begin{lem}\label{neib}
 W.h.p.\@ for every $v \in BAD \backslash SMALL$ we have that $v$ has at least $\log \log n$ out-arcs in each color ending in $V_n\backslash BAD$ and at least $\log \log n$ in-arcs in each color starting from $V_n\backslash BAD$.
\end{lem}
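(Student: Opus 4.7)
\emph{Proof plan.} Fix $v \in BAD \setminus SMALL$ and color $c \in [q]$; by the symmetry in Remark \ref{b} it suffices to bound from below the number of out-arcs of color $c$ from $v$ ending in $V_n \setminus BAD$. The structural facts I plan to combine are all w.h.p.\@ events: (i) $d_\tau^+(v) \geq \log n / 100$ from $v \notin SMALL$; (ii) $|N(v) \cap BAD| \leq 4 e^{q \cdot 10^5}$ from Lemma \ref{Badapart}; (iii) an adaptation of Lemma \ref{lem4} applied at $m_3 = 3 e^{-q \cdot 10^4} n \log n$ gives $\Delta(D_{m_3}) \leq \log n / 1000$; and (iv) every vertex outside $BAD$ lies in $FULL_{m_1}^+ \cap FULL_{m_1}^-$ by Lemma \ref{full}. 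Combining these, $v$ has at least $\log n/100 - \log n/1000 - 4e^{q \cdot 10^5} = \Omega(\log n)$ out-arcs $vw$ revealed during $(m_3, \tau]$ whose endpoint $w$ lies in $V_n \setminus BAD$.

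Next I trace $COL$ on these arcs. For each such $vw$ revealed at time $t$, since $t > m_1$ and $w \notin BAD$, (iv) gives $w \in FULL_{t-1}^-$, so $C_w^-(t-1) = \emptyset$ and the fourth for-loop of $COL$ falls into one of two branches. If $v \notin FULL_{t-1}^+$ the greedy clause picks a color from $C_v^+(t-1)$ alone, filling one of $v$'s missing colors; if $v \in FULL_{t-1}^+$ the balancing clause (recall $v \in BAD$ and $\mathbb{I}(w \in BAD) = 0$) picks a color minimizing $d_t^+(v, \cdot)$. Hence after at most $|C_v^+(m_3)| \leq q$ such non-$BAD$ arcs the vertex $v$ is $FULL^+$, and every subsequent non-$BAD$ out-arc of $v$ in $(m_3, \tau]$ is colored by balancing. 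Let $N'$ denote the number of these balancing steps; then $N' \geq \Omega(\log n) - q$.

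The final ingredient is a pigeonhole bookkeeping for balancing: in any $q$ consecutive balancing additions at $v$, the minimum color count at $v$ must increase by at least $1$, because each such addition removes one bin from the argmin set and any intervening non-balancing additions can only accelerate this by monotonicity of $d_t^+(v, \cdot)$ in $t$. Consequently $\min_c d_\tau^+(v, c) \geq \lfloor N'/q\rfloor = \Omega(\log n / q) \geq \log \log n + 4 e^{q \cdot 10^5}$ for $n$ large. Subtracting the at most $4 e^{q \cdot 10^5}$ out-arcs of $v$ ending in $BAD$ leaves at least $\log \log n$ out-arcs of color $c$ to $V_n \setminus BAD$; a union bound over the $o(1)$-probability complements of (i)--(iv) completes the argument. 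The main obstacle I foresee is in verifying (iii) -- that by time $m_3$ the vertex $v$ has not already consumed most of its out-arc budget -- but this succeeds precisely because $m_3$ is an exponentially small fraction of $\tau$, and the calculation in the proof of Lemma \ref{lem4} transfers with only a constant change in the constants.
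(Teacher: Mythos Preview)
Your proposal is correct and follows essentially the same approach as the paper's proof: both combine $v\notin SMALL$, the degree bound at time $m_3$ (the paper uses $3\cdot\frac{\log n}{10^3 q}$ via three applications of Lemma~\ref{lem4}, you adapt the lemma once), Lemma~\ref{Badapart}, and Lemma~\ref{full} to isolate $\Omega(\log n)$ out-arcs from $v$ to $V_n\setminus BAD$ appearing after $m_3$, then observe that all but $\le q$ of these are colored by the balancing rule minimizing $d_t^+(v,\cdot)$, and finish with a pigeonhole. The only cosmetic difference is that the paper asserts each color receives $\ge N'/q-1$ of the balanced arcs directly, whereas you bound $\min_c d_\tau^+(v,c)\ge \lfloor N'/q\rfloor$ first and then subtract the $\le 4e^{q\cdot 10^5}$ arcs into $BAD$; your accounting is in fact slightly cleaner, since the balanced arcs need not themselves be evenly distributed when the pre-$m_3$ counts are unequal. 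One small quibble: the in/out symmetry you invoke is not literally Remark~\ref{b} (which concerns the identical distribution of the $B_i^\pm$ sets) but rather the evident symmetry of $COL$ under swapping in- and out-roles; the paper just writes ``a similar argument holds''.
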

\begin{proof}
Let $v \in BAD \backslash SMALL$. Then $v$ has at least $\frac{\log n}{100}$ out-neighbors. Lemma \ref{lem4} gives us that the out- degree of $v$ at time $m_3$ is at most $\frac{3\log n}{10^3q}$. Therefore  $v$ has at least  $\frac{\log n}{100}  -  \frac{3\log n}{10^3q} -  4e^{q\cdot 10^5}$ out-neighbors in $V_n \backslash BAD$ that arrive after $m_3$. By the previous Lemma w.h.p.\@  for all $u \in V_n \backslash BAD$ and all $c \in [q]$ we have $d^-_{m_1}(u,c)\geq 1$. Hence at most $q$ such arcs  $vu$ that arrive at some time $t> m_3$ will be colored under the condition $ v \notin FULL_{t-1}^+ $. Thus there are at least $\frac{\log n}{100}  -  \frac{3\log n}{10^3q} -  4e^{q\cdot 10^5}-q$ arcs $vu $ with $u \in V_n \backslash BAD$ that will arrive at some time $t> m_3$ and will be colored with color $c$ that minimizes $d^+_t(v,c)\mathbb{I}\{v \in BAD\}
+d^-_t(u,c)\mathbb{I}\{u \in BAD\}= d^+_t(v,c)$ (i.e.\@ the arcs are given a color in which $v$ has the smallest out-degree when they appear). Thus $v$ will have  at least $\frac{1}{q}\big (\frac{\log n}{100}  -  \frac{3\log n}{10^3q} -  4e^{q\cdot 10^5}-q\big) -1 \geq \log \log n$ out-arcs in each color ending in $V_n\backslash BAD$. A similar argument holds for the number of arcs from $V_n \backslash BAD$ to $v$.
\end{proof}
{\bf{Proof of Theorem \ref{hterm}}}. Assume that the algorithm $HideBad$ does not terminate. Then there is an iteration $f$ at which there  do not exist $v_j \in V^+$ and $v_k \in V^-$ such that $v_jz_{f},z_{f}v_k\in E(D_c')$,
WLOG the former (the case $\nexists v_k\in V^-$ will follow similarly).
\vspace{5mm}
\\Case 1: $f\leq s$ (i.e $z_{f}\in SMALL$).  As every vertex has in-degree at least one in $D'_c$, there exists
$x\in V_n$ such that the arc $xz_f$ belongs to $ E_{\tau}$ and has color $c$. Hence, $\exists \ell< f$ such that  at $\ell$-th iteration $x$ was removed from $V^+$.
This implies that $z_{\ell}\in N(N(z_f))$. Hence we get that $z_\ell, z_f$ belong to $SMALL$ and $z_f,z_\ell$ have distance less than 3 contradicting Lemma \ref{lem2}.
\vspace{5mm}
\\Case 2: $s < f \leq b$ (i.e $z_{f}\in BAD \backslash SMALL$). Since $z_f \notin SMALL$ Lemma \ref{neib}
implies that  $\exists S \subseteq  V_n$  such
that $\vert S \vert \geq \log \log n$ and for  every $z\in S$ the arc $zz_f$ belongs to $ E_\tau$ and has color $c$.
Observe that at any iteration $\ell < f$  at most 2 vertices are removed from $V^+\cap S$ in the case that $z_\ell \in N(N(z_f))$, and none are removed otherwise. Hence as $V^+\cap S=\emptyset$ at the beginning of the $f$-th iteration we have that  $2|N(N(z_f)) \cap BAD| \geq \log \log n$ which contradicts Lemmas \ref{Badapart} and \ref{neib}.
\section{Structure of $D_c$}
\begin{lem}\label{sizebad}
 W.h.p.\@ $|BAD| = o(n^{1-\delta})$,  for some constant $\delta >0$.
\end{lem}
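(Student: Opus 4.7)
The plan is to bound the expected size of each of the four component sets $B_1^+, B_1^-, B_2^+, B_3^-$ separately and then apply Markov's inequality plus a union bound. By Remark~\ref{b} these four sets are identically distributed, so it suffices to estimate $\mathbb{E}[|B_1^+|]$, i.e.\@ the expected number of vertices whose out-degree in $D_{n,m_1}$ is at most $\epsilon \log n$.

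First I would pass from $D_{n,m_1}$ to $D_{n,p_1}$ via inequality (2), since ``out-degree at most $\epsilon\log n$'' is monotone decreasing. In $D_{n,p_1}$ the out-degree of a fixed vertex is $\mathrm{Bin}(n-1,p_1)$ with mean $\mu=(n-1)p_1\sim e^{-q\cdot10^4}\log n$, while the threshold $\epsilon\log n=e^{-q\cdot10^6}\log n$ is a factor $e^{-q\cdot99\cdot10^4}$ below $\mu$. Applying the lower-tail Chernoff bound (3) with $1-\delta' =\epsilon\log n/\mu=o(1)$ gives
\[
\mathbb{P}\bigl(d^+_{m_1}(v)\leq \epsilon\log n\bigr)\leq 3\exp\bigl\{-(1-o(1))\mu/2\bigr\}\leq n^{-e^{-q\cdot10^4}/3}.
\]
Summing over the $n$ vertices yields $\mathbb{E}[|B_1^+|]\leq n^{1-c}$ for a constant $c=c(q)>0$ (any $c<e^{-q\cdot10^4}/3$ works).

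Set $\delta=c/2$. By Markov,
\[
\mathbb{P}\bigl(|B_1^+|>n^{1-\delta}\bigr)\leq \frac{\mathbb{E}[|B_1^+|]}{n^{1-\delta}}\leq n^{-\delta}=o(1),
\]
and the same bound holds for $B_1^-,B_2^+,B_3^-$ by Remark~\ref{b}. A union bound over the four sets gives $|BAD|\leq 4n^{1-\delta}=o(n^{1-\delta/2})$ w.h.p., yielding the lemma after relabeling $\delta$. There is no real obstacle here beyond checking that the Chernoff exponent is genuinely positive and only mildly degrades in $q$; the crucial quantitative fact being used is the gap between $\epsilon$ and $m_1/(n\log n)$, which was built into the choices $\epsilon=e^{-q\cdot10^6}$ and $m_1=e^{-q\cdot10^4}n\log n$ precisely for this reason.
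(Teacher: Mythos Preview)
Your proposal is correct and follows essentially the same approach as the paper: bound $\mathbb{P}(v\in B_1^+)$ by passing to $D_{n,p_1}$ via (2), apply the lower-tail Chernoff bound (3) to get a probability of order $n^{-c}$ with $c\sim e^{-q\cdot10^4}/2$, and finish with Markov's inequality. The only cosmetic difference is that the paper union-bounds over the four events $\{v\in B_i^*\}$ \emph{before} applying Markov to $|BAD|$ directly, whereas you apply Markov to each $|B_i^*|$ and then union-bound; the arithmetic and constants are otherwise the same.
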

\begin{proof}
Recall that $p_1=m_1/n(n-1)$. For every $v \in V_n$, (2) gives us
\begin{align*}
\mathbb{P}\big(v \in BAD\big)& = \mathbb{P}\big(v \in B_1^+ \cup B_1^-\cup B_2^+ \cup B_3^-\big) \leq 4\mathbb{P}(v\in B^+_1)= 4 \mathbb{P}\bigg(d^+_{m_1}(v)\leq  e^{-\epsilon \log n} \bigg)\\
& \leq 4\cdot 3\cdot \mathbb{P} \bigg( Bin(n-1,p_1)\leq  \epsilon  \log n \bigg) 
\leq 12\exp \bigg( -0.49e^{-q \cdot 10^4}  \log n\bigg) = n^{-0.49e^{-q \cdot 10^4 }}. 
\end{align*}
At the last inequality we used (3).
Hence by Markov's inequality, we have
\begin{align*}
\mathbb{P}\bigg(|BAD| > n^{1-0.4e^{-q \cdot 10^{4} }}\bigg) &\leq \frac{\mathbb{E}(|BAD|)}{n^{1-0.4e^{-q \cdot 10^4 }}}
\leq n^{-0.09e^{-q \cdot 10^{4} }}. \qedhere
\end{align*}
\end{proof}

\begin{lem}\label{sizevert}
W.h.p.\@ $|V_c| = n-o(n)$.
\end{lem}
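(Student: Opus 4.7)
The plan is to bound $n - |V_c|$ by counting the number of vertex identifications performed in the contraction phase of $HideBad$, and then invoke Lemma~\ref{sizebad}.

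First I would observe that each contraction of an edge in $E_{contr}$ merges two vertices into one, i.e.\@ reduces the vertex count by exactly $1$, provided that the subgraph $(V_n, E_{contr})$ has no cycles. This is essentially automatic: at each iteration of $HideBad$, whenever an edge $v_j z_\ell$ (resp.\@ $z_\ell v_k$) is added to $E_{contr}$, both $v_j$ and $z_\ell$ (resp.\@ $z_\ell$ and $v_k$) are simultaneously removed from $V^+$ (resp.\@ $V^-$). Hence each vertex appears in $E_{contr}$ at most once as a tail and at most once as a head, so the underlying subgraph of $E_{contr}$ is a disjoint union of directed paths. Contracting a directed path with $k$ edges reduces the vertex count by exactly $k$, so $n - |V_c| = |E_{contr}|$.

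Next I would bound $|E_{contr}|$. A straightforward case analysis of the two \texttt{for}-loops in $HideBad$ shows that for each $z_\ell \in BAD$ the algorithm adds at most $2$ edges to $E_{contr}$: one edge $v_j z_\ell$ whenever $z_\ell$ was still in $V^-$ at the start of iteration $\ell$, and one edge $z_\ell v_k$ whenever $z_\ell$ was still in $V^+$. Therefore $|E_{contr}| \leq 2|BAD|$, and combining with the previous paragraph gives $n - |V_c| \leq 2|BAD|$.

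Finally I would apply Lemma~\ref{sizebad}: w.h.p.\@ $|BAD| = o(n^{1-\delta})$ for some $\delta > 0$, and in particular $|BAD| = o(n)$. Plugging in yields $|V_c| \geq n - 2|BAD| = n - o(n)$ w.h.p., as desired. I do not anticipate any real obstacle here: the statement is essentially a bookkeeping consequence of the structural Lemma~\ref{sizebad} once one checks that $E_{contr}$ is acyclic, which follows from the invariant that each vertex is used at most once on each side.
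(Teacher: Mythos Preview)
Your proposal is correct and follows essentially the same approach as the paper: bound $n-|V_c|$ by the number of contractions, observe that at most $2|BAD|$ contractions occur, and invoke Lemma~\ref{sizebad}. The paper's own proof is a two-line version of yours; your additional observation that $(V_n,E_{contr})$ is a disjoint union of directed paths (so no contraction creates a self-loop) is a reasonable piece of care that the paper leaves implicit.
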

\begin{proof}
Every contraction that occurs during  the execution of $HideBad$ reduces the number of vertices by one. As  at most $2|BAD|$ contractions are performed, Lemma \ref{sizebad} gives us that w.h.p.\@ $|V_c| \geq n- 2\cdot n^{1-0.4e^{-q \cdot 10^{4} }}$.
\end{proof}
We henceforth set $n_c:=\vert V_c \vert =(1-o(1))n$.
\section{PHASE 1}

In this section we take our first step toward proving that w.h.p.\@ $D_c$ has a Hamilton cycle  by showing that w.h.p.\@ there exists a matching in $D_c$ consisting of at most $2 \log n$ cycles and whose edges appear by time $m_3$. As usual, we proceed by implicitly conditioning on all aforementioned events proven to occur w.h.p.
In the proof of Lemma \ref{6edges} we are going to use the following elementary result.
\begin{lem}\label{singlecycle}
W.h.p.\@ in $D_{\tau_q}$ no vertex belongs to two distinct cycles of length at most 4.
\end{lem}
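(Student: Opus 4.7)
The plan is to prove Lemma \ref{singlecycle} by reducing from $D_{\tau_q}$ to $D_{n,p_u}$ (via (2) and the w.h.p.\@ bound $\tau_q\le m_u$), and then bounding the first moment of the number of ``offending'' configurations in $D_{n,p_u}$. Since the event that some vertex lies on two distinct directed cycles of length at most $4$ is monotone increasing under edge addition, and we know $\tau_q\le m_u$ w.h.p.\@ so that $D_{\tau_q}\subseteq D_{n,m_u}$, it suffices to show that w.h.p.\@ $D_{n,p_u}$ has no such configuration.

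The key structural ingredient is the following. Let $C_1,C_2$ be distinct directed cycles, each of length in $\{2,3,4\}$, that share at least one vertex, and let $H=C_1\cup C_2$. I will show that $e(H)\ge v(H)+1$. Writing $t=|V(C_1)\cap V(C_2)|\ge 1$ and $s=|E(C_1)\cap E(C_2)|$, inclusion--exclusion gives $e(H)-v(H)=t-s$, so the task reduces to $t\ge s+1$. Let $G_s$ be the sub-digraph of $C_1$ consisting of the shared edges; since $G_s$ is a sub-digraph of a directed cycle, every vertex of $G_s$ has in- and out-degree at most $1$, so $G_s$ is a vertex-disjoint union of directed paths and directed cycles. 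A directed cycle contained in $G_s$ would be a sub-digraph of both $C_1$ and $C_2$; but a cycle subgraph of a directed cycle is the whole cycle, which would force $C_1=C_2$, a contradiction. Hence $G_s$ is a disjoint union of (possibly empty) directed paths, so when $s\ge1$ the number of vertices in $G_s$ is at least $s+1$, giving $t\ge s+1$. When $s=0$, the fact that $C_1,C_2$ share a vertex gives $t\ge 1>s$. Either way, $e(H)\ge v(H)+1$.

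With this in hand, the first moment is routine. There are only finitely many isomorphism types of digraphs $H$ arising as the union of two distinct directed cycles of length at most $4$, and each satisfies $v(H)\le 7$. For each such $H$ there are at most $O(n^{v(H)})$ labelled copies in $K_n$, each present in $D_{n,p_u}$ with probability $p_u^{e(H)}$. Thus the expected number of pairs $(C_1,C_2)$ of distinct directed cycles of length at most $4$ that share a vertex and both lie in $D_{n,p_u}$ is at most
\[
\sum_{H} O(n^{v(H)})\, p_u^{e(H)} \;=\; \sum_{H} O\!\left(n^{v(H)-e(H)}(np_u)^{e(H)}\right) \;=\; O\!\left(\frac{(\log n)^{8}}{n}\right) \;=\; o(1),
\]
using $np_u=O(\log n)$, $e(H)\le 8$, and $v(H)-e(H)\le -1$. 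Markov's inequality then gives that w.h.p.\@ no such pair exists in $D_{n,p_u}$, and by the monotonicity reduction above, w.h.p.\@ none exists in $D_{\tau_q}$ either.

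The only nontrivial step is the structural claim $e(H)\ge v(H)+1$, i.e.\@ ruling out the degenerate case where two distinct cycles intersect in so many edges that no ``extra'' edge is created. The argument via the sub-digraph of shared edges handles this uniformly without a tedious case split on $(\ell_1,\ell_2)$; once that bound is in hand, the first moment calculation and the passage between $D_{\tau_q}$, $D_{n,m_u}$ and $D_{n,p_u}$ are standard.
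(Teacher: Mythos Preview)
Your proof is correct and follows essentially the same approach as the paper: reduce to $D_{n,p_u}$ via monotonicity and (2), then use a first-moment bound based on the fact that the union of two distinct short cycles sharing a vertex has at least one more edge than vertices. The paper simply asserts this structural fact (phrased as ``there are $3\le k\le 7$ vertices that span $k+1$ edges'') and writes down the union bound in one line, whereas you supply a clean uniform argument for $e(H)\ge v(H)+1$ via the shared-edge subgraph; this added rigor is welcome but does not change the method.
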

\begin{proof}
In the event that there is a vertex that belongs to two distinct cycles of length at most 4
 there are $3\leq k\leq 7$ vertices that span $k+1$ edges in $D_{\tau_q}$. Since w.h.p.\@ $\tau_q<2\log n$, (2) implies that the  probability of such event occurring is bounded by
\begin{align*}
3\sum_{k=3}^7\binom{n}{k} \binom{k(k-1)}{k+1} \bigg(\frac{2\log n}{n} \bigg)^{k+1}=o(1).
\end{align*}
\end{proof}
\begin{lem}\label{6edges}
 W.h.p.\@  every  $v \in V_c$ has at least 6 out- and 6 in- arcs in $E(D_c)$  revealed during the intervals $(m_1,m_2]$ and $(m_2,m_3]$ respectively, whose other endpoint lies in $V^*:=V \backslash (BAD \cup N(BAD))$.
\end{lem}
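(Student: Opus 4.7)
The plan is to exploit the balanced round-robin coloring of $COL$ during $(m_1,m_2]$ on a per-vertex basis. By symmetry I handle only the out-arc statement; the in-arc case follows identically using the $c^-$ round-robin in $(m_2,m_3]$. Fix $v \in V_c$ and set $u = v^+$, its out-representative. By Remark \ref{nbad}, $u \notin BAD$, so one may condition on the w.h.p.\ event of Lemma \ref{full} that $u \in FULL_{m_1}^+$ and every $w \in V\setminus BAD$ lies in $FULL_{m_1}^-$. Consequently, every out-arc $uw$ appearing in $(m_1,m_2]$ with $w \in V\setminus BAD$ triggers the round-robin branch of $COL$, receiving the color prescribed by the cyclic counter $c^+(u)$.

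Since $u \notin BAD$ forces $u \notin B_2^+$, $u$ has strictly more than $\epsilon\log n$ out-arcs revealed in $(m_1,m_2]$. Lemma \ref{Badapart} bounds $|N(u)\cap BAD| \leq 4e^{q\cdot 10^5} = O_q(1)$, so all but $O_q(1)$ of these arcs are round-robin. Writing $R$ for the number of round-robin arcs, $R \geq \epsilon\log n - O_q(1) \geq \tfrac{1}{2}\epsilon\log n$ for $n$ large. Because round-robin cycles the colors deterministically, for each $c \in [q]$ at least $\lfloor R/q\rfloor \geq \epsilon\log n/(2q)$ of the round-robin arcs of $u$ receive color $c$.

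To finish, I must show that at least $6$ of these color-$c$ arcs have destination in $V^*$. Equivalently, I bound $X_u := |\{w \in V\setminus V^*: u \to w \in (m_1,m_2]\}|$ by a $q$-dependent constant $K_0$ uniformly in $u$. For each $w$, the proof of Lemma \ref{sizebad} gives $\Pr[w \in BAD] \leq 12\,n^{-\alpha}$ with $\alpha := 0.49\,e^{-q\cdot 10^4}$, and a union bound over potential BAD-neighbors of $w$ yields $\Pr[w \in V\setminus V^*] = O(\log n \cdot n^{-\alpha})$. Fixing a constant number of arcs $u\to w_i$ to lie in $(m_1,m_2]$ constrains only a vanishing fraction of the random bits, so the events $\{w_i \in V\setminus V^*\}$ are affected by only a $1+o(1)$ factor and behave as essentially independent across $i$. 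A standard factorial-moment computation then yields
\[
\mathbb{E}\!\left[\binom{X_u}{k}\right] \leq O\!\left(\frac{\log^{2k}n}{k!\, n^{k\alpha}}\right).
\]
Taking $K_0 := \lceil 1/\alpha\rceil + 1$ so that $K_0\alpha > 1$, Markov's inequality gives $\Pr[X_u \geq K_0] = o(1/n)$, and a union bound over $u$ shows $X_u \leq K_0-1 = O_q(1)$ uniformly w.h.p. Subtracting, the number of color-$c$ arcs of $u$ in $(m_1,m_2]$ with destination in $V^*$ is at least $\lfloor \epsilon\log n/(2q)\rfloor - K_0$, which exceeds $6$ for $n$ sufficiently large (threshold depending on $q$).

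The main obstacle is the decoupling step in the factorial-moment bound: since $V\setminus V^*$ is a random set determined by the very process governing the arc appearances, one must verify carefully that conditioning on a constant number of arc-location events within $(m_1,m_2]$ perturbs each $w_i$'s membership in $BAD\cup N(BAD)$ by only a $1+o(1)$ factor. Once this decoupling is in place, the factorial-moment bound closes; note that Chernoff-type concentration would not suffice here, because the expected count $\epsilon\log n/q$ has constant-too-small-a-factor for a direct exponential-tail union bound over $n$ vertices.
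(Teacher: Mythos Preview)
Your setup through the round-robin count matches the paper exactly. The divergence is at the step where you bound the number of out-arcs from $u=v^+$ landing in $V\setminus V^*$: you launch a fresh factorial-moment computation and flag the decoupling as the main obstacle. The paper avoids this entirely by a deterministic argument using one more structural lemma you did not invoke, Lemma~\ref{singlecycle}. Any $w\in N(u)\cap N(BAD)$ has a witness $z\in BAD\cap N(w)\subseteq BAD\cap N(N(u))$; Lemma~\ref{Badapart} (which you already cite) caps the number of such $z$ at $4e^{q\cdot10^5}$, and Lemma~\ref{singlecycle} forces $|N(u)\cap N(z)|\le 2$ for each of them (three common neighbours would produce two distinct $4$-cycles through $u$). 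Hence $|N(u)\cap N(BAD)|\le 8e^{q\cdot10^5}$, and together with your own bound on $|N(u)\cap BAD|$ this gives $|N(u)\cap(V\setminus V^*)|=O_q(1)$ w.h.p.\ with no new probability computation at all. Subtracting this constant from $\lfloor R/q\rfloor$ finishes the proof immediately.

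Regarding your probabilistic route: the gap you flag is genuine and in fact deeper than you describe. It is not only the conditioning on a few arc-location events that needs to be controlled; the events $\{w_i\in N(BAD)\}$ are \emph{intrinsically} positively correlated through shared witnesses --- a single $z\in BAD$ adjacent to several of the $w_i$ simultaneously places them all in $N(BAD)$. A rigorous factorial-moment bound would therefore have to enumerate witness structures (how many distinct $z$'s are used, which $w_i$'s each one covers), which essentially re-derives the combinatorics behind Lemma~\ref{Badapart}. This can be made to work, but it is substantially more effort than the one-line appeal to Lemma~\ref{singlecycle}; the paper's approach buys you a purely deterministic conclusion from lemmas already in hand.
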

Here, it is imperative that we avoid $BAD \cup N(BAD)$, since those vertices have already been assigned an edge in at least one direction by the algorithm $HideBad$ from Section \ref{s.Dcalg}.

\begin{proof}
We originally defined $BAD$ during the algorithm $COL$ to make sure these vertices we want to work with had many edges during the $(m_1,m_2]$ period, and the cycling between colors means a positive proportion of them obtain color $c$.
The edges to $BAD$ don't enjoy the cyclic colors, and the edges to $N(BAD)$ are discarded altogether even if they were in desired color $c$, but the estimates from Section \ref{s.Dcalg} forbid too many of these vertices from being clustered around $v$.
\\
More explicitly,  let $v\in V_c$. Then by Remark (\ref{nbad}) we have $v^+ \notin BAD$, therefore
 Lemma \ref{full} gives us  $v^+ \in FULL_{m_1}^+$.    Now $v^+ \not\in BAD \Rightarrow v^+ \not\in B_2^+$,  so there are at least $\epsilon \log n$ arcs $v^+w$, $w \in V_n$ that have been revealed after the time $m_1$ and before the time $m_2+1$. 
Any such arc $v^+w$ that was \emph{not} colored cyclically was due to $w \not\in  FULL_{m_1}^+$ taking priority, and hence $w \in N(v^+) \cap BAD$ by Lemma \ref{full}.  
So out of all the potential arcs at least $\frac{1}{q}(\epsilon \log n -|BAD \cap N(v^+)|)-1$ have color $c$ (see lines 6-14, 24-25 of \emph{COL}), 
 and already none of these are to $BAD$.
Meanwhile, for $N(BAD)$, Lemma \ref{Badapart} immediately gives $|N(N(v^+))\cap BAD|\leq 4e^{q\cdot 10^5}$.
In addition Lemma \ref{singlecycle} implies that $\forall w\in BAD$, $|N(v^+)\cap N(w)|\leq 2$,
so any $w \in N(N(v^+))$ arose from $\leq 2$ neighbours of $v^+$, and  it follows $|N(BAD) \cap N(v^+)|\leq 2 \cdot 4e^{q\cdot 10^5}$.
Hence, since $ V^*:= V\setminus (BAD \cup N(BAD))$, 
there are at least $\frac{\epsilon \log n}{q}-\frac{8}{q}e^{q\cdot 10^5}-1 -4e^{q\cdot 10^5}\geq 6$ arcs from $v^+$ to $V^*$ in $E(D_c)$ revealed during the interval $(m_1,m_2]$.

The other part of this Lemma follows in a similar fashion (with $v^-$, $FULL^-_{m_1}$ and $(m_2,m_3]$ in place of  $v^+$,  $FULL^+_{m_1}$ and $(m_1,m_2]$ respectively).
\end{proof}

\begin{defn}
For $v \in V_c$ set:
$$E_c^+(v):=\{ \text{the first six arcs from } v \text{ to $V^*$ in } E(D_c) \text{ that are revealed in } (m_1,m_2]\},$$
 $$E_c^-(v):=\{ \text{the first six arcs from } v \text{ to $V^*$ in } E(D_c) \text{ that are revealed in } (m_2,m_3]\},$$
$$E_c^+:= \underset{v \in V_c}{\cup }E_c^+(v), \hspace{20mm} E_c^-:= \underset{v \in V_c}{\cup }E_c^-(v).$$
\end{defn}
From Lemma \ref{6edges} it follows that w.h.p.\@ the above sets are well-defined.
\begin{lem}\label{matching}
 W.h.p.\@ $E_c^+ \cup E_c^-$ spans a matching on $V_c$ consisting of at most $2\log n_c$ cycles.
\end{lem}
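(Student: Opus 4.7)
The plan is to reformulate the existence of the desired matching as a bipartite perfect matching problem and then to bound the number of cycles via a randomized construction.

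Let $H$ be the bipartite graph on $L \sqcup R$ with $L = R = V_c$, placing an edge $v_L u_R$ precisely when $vu \in E_c^+ \cup E_c^-$ and $v \neq u$. A perfect matching $M$ in $H$ corresponds bijectively to a 1-factor of $V_c$ drawn from $E_c^+ \cup E_c^-$, and its cycle count equals the number of cycles of the associated permutation $\pi : V_c \to V_c$. Every vertex of $H$ has degree at least $6$: each $v_L$ via the six distinct $V^*$-targets of $E_c^+(v)$, and each $u_R$ via the six distinct $V^*$-sources of $E_c^-(u)$. Furthermore, since $v^+,v^- \notin BAD$ (Remark \ref{nbad}) and $V^* \subseteq V_c$, all these neighbors genuinely lie in $L$ or $R$ respectively, so $H$ is well defined.

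\textbf{Step 1 (matching existence).} I would verify Hall's condition for $H$. For $S \subseteq L$ with $|S| \leq 6$, the minimum degree bound gives $|N_H(S)| \geq 6 \geq |S|$ immediately. For larger $|S|$, I would exploit the randomness of the process: the six targets of $E_c^+(v)$ are the six earliest arcs $vw$ with $w \in V^*$ to be revealed in the random interval $(m_1,m_2]$, and conditionally on the high-probability events already established, these targets are distributed approximately uniformly and essentially independently over $V^*$; similarly for $E_c^-(u)$. A standard union bound over candidate Hall violators then rules them out w.h.p., yielding a perfect matching.

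\textbf{Step 2 (cycle count).} Among all perfect matchings, I would select one whose associated permutation has few cycles by running a randomized greedy algorithm on $H$: process the vertices of $L$ in a uniformly random order, and at each step match the next $v_L$ to a uniformly random available neighbor in $R$. This should produce a permutation whose cycle distribution is close to that of a uniformly random permutation on $V_c$. At step $t$, closing a cycle requires $v_L$ to land on its specific current \emph{chain-ancestor} among $\Omega(1)$ available neighbors, so the cycle-closure probability is $O\!\left(1/(n_c-t)\right)$. Summing over $t$ yields expected cycle count $O(H_{n_c}) = O(\log n_c)$, and a standard bounded-differences (Azuma--Hoeffding) martingale inequality, applied to the natural Doob filtration indexed by $t$, concentrates the cycle count to at most $2\log n_c$ w.h.p.

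\textbf{Main obstacle.} The principal difficulty is making Step 2 fully rigorous: one must verify that at each step the greedy procedure still has $\Omega(1)$ valid neighbors (none too depleted by the matching so far) and precisely bound the cycle-closure probability in the presence of adversarial conditioning on earlier choices. The disjoint-interval independence of $E_c^+$ (from $(m_1,m_2]$) and $E_c^-$ (from $(m_2,m_3]$) is the most useful extra ingredient here: one can carry out the main greedy construction using $E_c^+$-arcs and reserve $E_c^-$-arcs as backup to repair bottlenecks via alternating paths, guaranteeing sufficient flexibility while preserving enough residual randomness to keep the cycle distribution close to uniform.
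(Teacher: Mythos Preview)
Your Step~1 is on the right track and matches the paper's approach: both use Hall's theorem together with the near-uniform distribution of the six targets of $E_c^+(v)$ (the paper makes this precise by conditioning on the complement $Y^+$ and arguing that, given $Y^+$, each $E_c^+(v)$ is an independent uniform $6$-subset of the available targets).

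Your Step~2, however, has a genuine gap. The claim that the cycle-closure probability at step $t$ is $O(1/(n_c-t))$ is not correct for a greedy procedure on a \emph{bounded-degree} bipartite graph. Each $v_L$ has only about six neighbours in $R$; if the current chain-ancestor happens to be one of them (which a priori happens with probability $\Theta(1/n_c)$ only at the very first step, but can be forced with constant probability once many $R$-vertices are used up), then choosing uniformly among the $O(1)$ remaining available neighbours closes a cycle with probability $\Omega(1)$, not $O(1/(n_c-t))$. Even if you deliberately avoid the ancestor whenever another neighbour is free, you must then control how often \emph{all} non-ancestor neighbours have already been matched, and there is no reason this should be as rare as $1/(n_c-t)$. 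Your proposed fix via $E_c^-$ backup and alternating paths does not address this: the issue is not a lack of a perfect matching but the \emph{structure} of the matching the greedy rule produces, and alternating-path repairs can create as many new cycles as they destroy.

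The paper sidesteps this entirely with a symmetry argument you are missing. It first shows that w.h.p.\ the matching induces a derangement on $V^*$. Then, for any two permutations $\phi_1,\phi_2$ of $V$ fixing $V\setminus V^*$ pointwise, it exhibits a measure-preserving bijection on edge sequences (apply $\rho=\phi_2\phi_1^{-1}$ to the in-vertex of every arc) that carries the event ``the matching induces $\phi_1$ on $V^*$'' to ``the matching induces $\phi_2$ on $V^*$''. Hence, conditioned on $V^*$, the induced permutation on $V^*$ is \emph{exactly} uniform over derangements, and the classical fact that a uniform derangement on $m$ points has at most $2\log m$ cycles w.h.p.\ finishes the proof. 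This global symmetry is what lets the paper extract the $\log n$ bound despite each vertex having only six choices; a local greedy analysis cannot recover it.
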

\begin{proof}
We will first show that  w.h.p.\@ $E_c^+ \cup E_c^-$ spans a matching on $V_c$. Assume that $E_c^+,E_c^-$ do not span a matching. Then HALL's Theorem gives us that there exists $K \subseteq  V_c$ with $|K|=k \leq \frac{n_c}{2}$ that has in- or out-neighbourhood induced by $E_c^-$ and $E_c^+$ respectively of size $k-1$. We will examine the case of its out-neighborhood being of size $k-1$. The other case will follow in a similar fashion.
\vspace{3mm}
\\Let $Y^+$  be the random subgraph of $D_c$ with edge set $E(Y^+):=E_{m_3}\backslash E^+_c$. Conditioned on $E(Y^+)$ we may assume that  for every $v \in V(D_c)$, $E_c^+(v)$ has been chosen independently uniformly at random from all sets of arcs form $v$ to  $V^*\backslash \{v\}$  of size 6 that have empty intersection with $E(Y^+)$.
To see this let $E(Y^+)=\{f_1,...,f_k\}$, $h_1,...,h_k \in[m_3]$ and for $v \in V_c$ we let $H_v\subseteq [m_3]$ such that $|H_v|=6$. If we further conditioned on the event $\mathcal{E}=\bigg(\underset{i\in[k]}{\bigwedge}\{h(f_i)=h_i\}\bigg) \wedge \bigg(\underset{v\in V_c}{\bigwedge}\big\{ \{h(e):e \in E_c^+(v)\}=H_v\big\}\bigg)$, in the case $\mathcal{E} \neq \emptyset$,
we have that for any $w \in V_c$ each set of arcs from $w$ to  $V^*\backslash \{w\}$   of size 6 that has empty intersection with $E(Y^+)$ has the same probability to be $E_c^+(w)$. Moreover the identity of the edges in $E_c^+(w)$ does not depend on the identity of $\{E_c^+(u):u \in A\}$ for any $A \subseteq  V_c \backslash \{w\}$.
\\We write $d^+_{Y^+}(v,S)$ for the number of arcs in $Y^+$ from $v$ to a given $S \subseteq  V_c$. Lemma \ref{lem4} implies that for every $v \in V_c$, $d^+_{Y^+}(v,V_c)\leq \frac{3\log n}{10^3q}$.  Therefore the probability of having a set $K\subseteq  V_c$ that has as out neighborhood induced by $E^+_c$ a set $S\subseteq  V^*$ with $6\leq |S|=|K|-1 \leq \frac{n_c}{2}$ is bounded above by
\begin{align*}
\sum_{k=7}^{\frac{n_c}{2}} &\sum_{|K|=k}\sum_{|S|=k-1}\prod_{v \in K}
\binom{k-1-\mathbb{I}(v \in S)-d^+_{Y^+}(v,S)}{6}\bigg/ \binom{|V^*|-1- d^+_{Y^+}(v,V)   }{6} \\
& \leq \sum_{k=7}^{\frac{n_c}{2}} \binom{|V_c|}{k}\binom{|V^*|}{k-1}   \prod_{j=1}^{k}\binom{k}{6}\bigg/ \binom{|V^*|-1- \frac{3\log n}{100q}}{ 6}
  \leq  \sum_{k=7} ^{\frac{n_c}{2}} \bigg(\frac{3n_c}{k}\bigg)^{2k}   \prod_{j=1}^{k}\frac{k^6}{(1-o(1))n_c^6}\\
 &\leq \sum_{k=7} ^{\frac{n_c}{2}} \bigg(\frac{3^2k^6n_c^2}{(1-o(1))k^2n_c^6}\bigg)^k
\leq  \sum_{k=7}^{\frac{n_c}{2}}\bigg(\frac{8k^4 }{(1-o(1))n_c^4}\bigg)^k =o(1).
\end{align*}
At the second inequality we used that Lemmas \ref{maxdegree} \& \ref{sizebad} imply that $n_c=|V_c|\geq |V^*|\geq |V|-|BAD|-|N(BAD)|=(1-o(1))n=(1-o(1))n_c.$
Hence, Hall's condition fails with probability $o(1)$ and w.h.p.\@ $E_c^+\cup E_c^-$ spans a matching.
\vspace{3mm}
\\We proceed to prove that a random matching spanned by $E_c^+ \cup E_c^-$ consists of at most $2\log n_c$ cycles.
 First let $W$ be the number of cycles that span less than 2 vertices of $V^*$
(i.e. 2-cycles  of the form $v,w$ with $v\in V^*$ and $w \notin V^*$). Then 
\begin{align*}
\mathbb{P}(W\geq 1)
&\leq \sum_{v\in V^*}\sum_{w \notin V^*} \mathbb{P}(vw^+ \in E^+_c(w) \text{ and } w^-v \in E^-_c(w)) 
\\ &\leq |V^*| |BAD|  \bigg(\frac{6}{(1+o(1))|V^*|} \bigg)^2 =o(1).
\end{align*}
Let $M$ be a random matching spanned by $E_c^+ \cup E_c^-$. 
Since w.h.p.\@ there is no such cycle spanned by a single vertex of $V^*$
we have that w.h.p.\@ $M$ induces a derangement on $V^*$. 
Finally conditioned on $V^*$, due to the symmetry of the edges with an endpoint in $V^*$, any such derangement is equally likely to occur.

Indeed let $A\subseteq V$ and consider any valid edge sequence ${\cal{E}}=e_1,...,e_{\tau_q}$. 
Let $\phi_1,\phi_2$ be any two permutations on $V$ that act as the identity on $V\setminus A$. Also let $\rho= \phi_2 \phi_1^{-1}$. Finally set ${\cal{E'}}=e_1',...,e_{\tau_q}'$ where for $i\in [\tau_q]$ $e_i=(u_i,w_i)$ and $e_i'=(u_i,\rho(w_i))$. 
Note that, provided $V\setminus A$ contains all $SMALL$ vertices, $ \cal{E'}$ is also a valid edge sequence.
Denote by $BAD_{\cal{E}}, V_{D,{\cal{E}}}, V_{\cal{E}}^+, V_{\cal{E}}^-, E_{c,{\cal{E}}}^+, E_{c,{\cal{E}}}^-$ and  $BAD_{\cal{E}'},V_{D,{\cal{E}}}, V_{\cal{E}}^+, V_{\cal{E}'}^-, E_{c,{\cal{E}'}}^+, E_{c,{\cal{E}'}}^-$ the sets $BAD, V_D, V^+, V^-, 
 E_{c}^+, E_{c}^-$ as defined by the sequences $\cal{E}$ and $\cal{E}'$ respectively. 
 
First assume that $A=V^*_{\cal{E}}$.
Then, as $\rho$ acts on the in-vertices of arcs with
in-vertex in $A$, we have  $A=V^*_{\cal{E}'}$. Similarly, by considering $\rho^{-1}$ we have  $A=V^*_{\cal{E}}$ only if  $A=V^*_{\cal{E}'}$.
Hence $A=V^*_{\cal{E}}$ iff $A=V^*_{\cal{E}'}$. Thereafter, given that 
$A=V^*_{\cal{E}} = V^*_{\cal{E}'}$,
 we have $BAD_{\cal{E}}= BAD_{\cal{E}'}$ and by extension, since the arcs adjacent to $BAD$ vertices are the same and appear in the same order in both sequences, we have
$V_{D,{\cal{E}}}=V_{D,{\cal{E}'}}$.
Furthermore $(u,w)\in  E_{c,{\cal{E}}}^+(u)$ iff $(u,\rho(w))\in  E_{c,{\cal{E}'}}^+(u)$ and $(u,w)\in  E_{c,{\cal{E}}}^-(w)$ iff $(u,\rho(w))\in  E_{c,{\cal{E}'}}^-(\rho(w))$. Therefore
$(u,w)\in  E_{c,{\cal{E}}}^+ \cup E_{c,{\cal{E}}}^-$ iff
$(u,\rho(w))\in  E_{c,{\cal{E}'}}^+ \cup E_{c,{\cal{E}'}}^-$.
Finally, given that 
$A=V^*_{\cal{E}} = V^*_{\cal{E}'}$,
 is not hard to check that $E_{c,{\cal{E}}}^+ \cup E_{c,{\cal{E}}}^-$ spans  a matching on $V_{D,{\cal{E}}}$ that induces the permutation $\phi_1$ on $A$ 
iff $E_{c,{\cal{E}'}}^+ \cup E_{c,{\cal{E}'}}^-$ spans a matching on $V_{D,{\cal{E}'}}$ that induces the permutation $ \rho(\phi_1)=\phi_2$ on $A$.
Here by induces we mean the following: if $u,u_k\in A$ and $u_1,u_2,...,u_{k-1} \notin A$ then the matching with arcs $(u,u_1),(u_1,u_2),...,(u_{k-1},u_k)$
induces a permutation on $A$ that sends $u$ to $u_k$. 

It is known (see for example \cite{cycles1}. \cite{cycles}) that the number of cycles, in a uniform random derangement on $[|V^*|]$, consists w.h.p.\@ of at most $2\log |V^*| \leq 2\log n_c$ cycles. Hence w.h.p.\@ $E^+_c \cup E^-_c$  spans a matching consisting of at most $2\log n_c$ cycles.
\end{proof}

\section{Reduction to Lemma \ref{reduction}}
Our vertex set is $V_c$. Lemma \ref{11ham} states that if $D_c$ is Hamiltonian then $D_{\tau_q}$ spans a cycle of color $c$. Hence,
in order to give a reduction of Theorem \ref{maintm}
to Lemma \ref{reduction} we need to define digraphs $F,H,D_{n_c,p}$ on $V_c$ such that:
\begin{enumerate}[i)]
\item $F$ is a 1-factor consisting of $O(\log n_c)$ directed cycles, 
\vspace{-2mm}
\item $H$ has total maximum in-/out- degree $O(\log n_c)$, 
\vspace{-2mm}
\item $D_{n_c,p}$ is a random digraph, every arc appears independently with probability $p=\Omega(\frac{\log n_c}{n_c})$ 
\vspace{-2mm}
\item  w.h.p.\@ $E(F),E(D_{n_c,p}) \subseteq D_{\tau_q}$ and all the arcs in $E(F) \cup (E(D_{n_c,p})\setminus E(H))$ have color $c$.
\end{enumerate}

 We let $F$ be a 1-factor spanned by $E^+_c\cup E^-_c$ consisting of at most $2\log n_c$ cycles, as provided by Lemma \ref{matching}.
We also let $H$ consist of all edges that appear by time $m_3$. Lemma \ref{lem4} implies that the maximum in/out-degree of $H$ is $O(\log n_c)$.

For the construction of $D_{n_c,p}$ we consider the arcs appearing in 
$(m_3,\tau_q]$. Since
\begin{itemize}
\item w.h.p.\@ $\tau_q- m_3\geq \frac{3}{4}\log n_c$,
\item w.h.p.\@ $|BAD| =o(n_c)$,
\item Every arc that appears after time $m_3$ and is not adjacent to $BAD$ is colored $c$ independently with probability $\frac{1}{q}$, and \item Every arc
in $D_c$ that has not appeared by time $m_3$ corresponds to exactly one arc not in $D_{m_3}$,
\end{itemize} we have the following (see \cite{JLR}).
We may couple $D_{n_c,p}$ and $D_{\tau_q}$ such that, w.h.p.: 
\begin{itemize}
\item $E(D_{n_c,p}) \subseteq E(D_{\tau_q})$, 
\item Every arc spanned by $V_c$ is present in $D_{n_c,p}$ independently with probability $p=\frac{2\log n_c}{3n_c}$, and
\item If $e\in E(D_{n_c,p})$ then either $e$ has color $c$ or $e\in H$ (i.e.\@ it corresponds to an arc that appears by time $m_3$).
\end{itemize}

By construction, $F,H,D_{n_c,p}$ satisfy the required conditions. Therefore Lemma \ref{reduction} implies Theorem \ref{maintm}. 
\subsection{New Setup}
The  two next sections are given in the setup of Lemma \ref{reduction} 
  (in particular, we replace $n_c$ by $n$ without further comment).
Thus we are given a vertex set $V$ of size $n$,
a 1-factor $F$ consisting of $z=\kappa\log n$ cycles,
$\kappa>0$ and a digraph $H$ of maximum in/out-degree $\Delta_H=O(\log n)$. Moreover we are given the random digraph $D_{n,p}$ where $p=\Omega \big (\frac{\log n}{n} \big)$. 

We let $\phi$ be the permutation on $V$ associated with $F$, i.e.
$E(F)=\{ (v,\phi(v)):v \in V \}$.
Furthermore we let $D^2 \sim D_{n,p'}$, $D^3 \sim D_{n,p'}$
 where $p':=\frac{\xi\log n}{n}=\min\big\{\frac{p}{3},\frac{\log n}{2n}\big\}$,
 for some $\xi = \xi(n) =\Omega(1)$. Since 
$(1-p')(1-p')\leq (1-p)$, we can couple $D_{n,p},D^2,D^3$ in such a way that  $D^2\cup D^3\subseteq D_{n,p}$.  Before proceeding we make the following observation.
\begin{lem}\label{lem5}
 W.h.p.\ $\Delta(D_{n,p'})\leq 4\log n$.
\end{lem}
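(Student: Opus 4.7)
The plan is to mimic the proofs of Lemmas \ref{maxdegree} and \ref{lem4} almost verbatim, since the question is a pure maximum-degree bound for an inhomogeneous random digraph with small edge probability. The essential inputs are: (i) $p' \leq \frac{\log n}{2n}$ from the definition $p' = \min\{p/3, \log n/(2n)\}$, so the expected out-degree of any vertex is at most $(n-1)p' \leq \tfrac{1}{2}\log n$; and (ii) a union bound over the $n$ vertices combined with the standard estimate $\binom{n-1}{k}\leq \bigl(\tfrac{e(n-1)}{k}\bigr)^{k}$.

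Concretely, I would first bound the probability that a fixed vertex $v$ has out-degree at least $4\log n$ by
\[
\binom{n-1}{4\log n}(p')^{4\log n} \leq \left(\frac{e(n-1)p'}{4\log n}\right)^{4\log n} \leq \left(\frac{e}{8}\right)^{4\log n}=n^{4\log(e/8)}.
\]
Since $e/8 < 1$, the exponent $4\log(e/8)$ is a negative constant (roughly $-4.3$), so the probability above is $o(n^{-3})$. Then I would take a union bound over all $n$ vertices and over both orientations (in- and out-degrees), yielding
\[
\mathbb{P}\bigl(\Delta(D_{n,p'}) \geq 4\log n\bigr) \leq 2n \cdot n^{4\log(e/8)} = o(1).
\]

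There is no real obstacle here: the argument is a direct application of the first-moment/union-bound technique already used in Lemmas \ref{maxdegree} and \ref{lem4}, and the only place where care is needed is to notice that the definition $p' = \min\{p/3, \log n/(2n)\}$ deterministically forces $p' \leq \log n/(2n)$, which is what gives the constant $e/8 < 1$ inside the exponential and hence a polynomial savings. (If one only knew $p' = \Omega(\log n/n)$ without an upper bound, the claim would fail; but the $\min$ in the definition of $p'$ gives us precisely the upper bound we need.)
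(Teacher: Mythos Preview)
Your proposal is correct and follows essentially the same approach as the paper: a union bound over the $2n$ choices of vertex and direction, combined with the estimate $\binom{n-1}{4\log n}(p')^{4\log n}\leq \bigl(\tfrac{e(n-1)p'}{4\log n}\bigr)^{4\log n}$ and the key observation that $p'\leq \tfrac{\log n}{2n}$ forces the base to be $\leq e/8<1$. The paper's proof is the same one-line computation, just written slightly more tersely.
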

\begin{proof}
\begin{align*}
\mathbb{P}\big( \Delta(D_{n,p'})\geq 4\log n \big)
&\leq 2 \cdot n  \binom{n-1}{4 \log n}{ p'}^{4\log n}
\leq  2n \bigg( \frac{en}{4\log n}\bigg)^{4\log n} \bigg(\frac{\log n}{2n}\bigg)^{4\log n} =o(1).
\qedhere
\end{align*}
\end{proof}
The proof of Lemma \ref{reduction} is splitted into two parts corresponding to \emph{Phase 2} and \emph{Phase 3} of the algorithm in \cite{apaper} that finds a Hamilton cycle in $D_{n,\frac{(1+o(1))\log n}{n}}$.
Thus we refer to the first part of Lemma \ref{reduction} as \emph{Phase 2} and to the second one as \emph{Phase 3}. As mentioned in the section  ``Finding a Hamilton Cycle"  
in \emph{Phase 2}, we  sequentially  join cycles in order to create a large one consisting of $n-o(n)$ vertices. We finish the merging of all the cycles in \emph{Phase 3}.

\section{Proof of Lemma \ref{reduction} - PHASE 2}
Let $C_1, \dots, C_z$ be the cycles in $F$ in order of decreasing size. In order to create a cycle of size at least $n-\frac{n}{\sqrt{\log n}}$ we implement the algorithm given below, denoting by $(a,b)$ the permutation transposing $a$ and $b$.

\begin{algorithm}[H]
\caption{Merge Cycles}
{\bf{Initialize:}} $\phi_1 = \phi,  E(\phi_1) = E(\phi), k=z$. \\
\While{ there exist $ 1\leq i<j \leq z$ and  $a \in V(C_i), b \in V(C_j)$ such that $ab, \phi_1^{-1}(b)\phi_1(a) \in E(D^2)\backslash E(H)$  }
{$\phi_1 \leftarrow \phi_1 \circ(a, \phi_1^{-1}(b))$\\
$E(\phi_1) \leftarrow \{ab,\phi_1^{-1}(b)\phi_1(a) \} \cup E(\phi_1)\backslash \{a\phi_1(a),\phi_1^{-1}(b)b \}$\\
$k \leftarrow k-1 $\\
Rename the cycles of $\phi_1$ as $C_1,C_2,...,C_k$ in  decreasing order of size.
}
Rename the final permutation to be $\phi_{2}$ and rename its cycles as $C_1',C_2',...,C_y'$ in decreasing order of size.
\end{algorithm}
\begin{lem}\label{bigcycle}
W.h.p.\@ $|C_1'| \geq n-\frac{n}{\sqrt{\log n}}$.
\end{lem}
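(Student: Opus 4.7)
The plan is to bound $\mathbb{P}[|C_1'|<n-n/\sqrt{\log n}]$ by a union bound over all possible executions of \emph{Merge Cycles}. I would parametrize each execution by its sequence of $m\leq z-1$ merges; at each step, a merge is determined by an ordered pair $(a,b)$ of vertices in distinct current cycles, giving at most $n^2$ choices. Hence the total number of possible executions is at most $\sum_{m=0}^{z-1}(n^2)^m$.

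For a fixed execution producing a terminal permutation $\phi_2$ with cycle sizes $s_1\geq\ldots\geq s_y$, I would bound its probability by the product of two factors. First, the $2m$ merge-arcs used must all lie in $E(D^2)\setminus E(H)$, contributing at most $(p')^{2m}$. Second, no cross-cycle merge can be valid at termination: for every $(a,b)$ with $a\in C_i$, $b\in C_j$, $i\neq j$, not both $ab$ and $\phi_2^{-1}(b)\phi_2(a)$ lie in $E(D^2)\setminus E(H)$. Since $\phi_2$ preserves the partition $\{C_1,\ldots,C_y\}$, the $2\sum_{i<j}s_is_j$ arcs involved in this termination condition are pairwise distinct and disjoint from the merge-arcs, so the failures are mutually independent. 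After discarding the $O(n\log n)$ pairs touching $H$, this factor is at most $(1-p'^2)^{\sum_{i<j}s_is_j}$. For a bad terminal partition with $s_1\leq n-n/\sqrt{\log n}$, I would use $\sum_i s_i^2\leq s_1 n$ to deduce $\sum_{i<j}s_is_j\geq n^2/(2\sqrt{\log n})$, so the termination factor is at most $\exp(-\xi^2\log^{3/2}n/4)$.

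Summing the per-execution bound over all executions, the total probability is at most $e^{-\xi^2\log^{3/2}n/4}\cdot\sum_{m=0}^{z-1}(n^2(p')^2)^m$. Since $n^2(p')^2=\xi^2\log^2 n$ and $z=\kappa\log n$, the geometric sum evaluates to $\exp(\Theta(\log n\log\log n))$, and because $\sqrt{\log n}\gg\log\log n$, the total product is $o(1)$, as required.

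The main obstacle I anticipate is the independence argument for the termination factor: I need to verify that, as $(a,b)$ ranges over all pairs in distinct terminal cycles, the arcs $\{ab\}\cup\{\phi_2^{-1}(b)\phi_2(a)\}$ are pairwise distinct and disjoint from the merge-arcs consumed by the execution. This rests on $\phi_2$ fixing each cycle setwise, so ``forward'' cross-arcs (tail in $C_i$, head in $C_j$) point in the opposite direction from ``backward'' cross-arcs $\phi_2^{-1}(b)\phi_2(a)$ (tail in $C_j$, head in $C_i$), and within each direction distinct $(a,b)$'s give distinct arcs because $\phi_2$ is a bijection. A secondary care point is the arithmetic balance $\sqrt{\log n}\gg\log\log n$, which provides just enough margin for the union bound to close.
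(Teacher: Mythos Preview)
Your proposal is correct and follows essentially the same approach as the paper: union-bound over all possible merge sequences, paying $(p')^{2m}$ for the $m$ merges used and $(1-(p')^2)^{\Theta(n^2/\sqrt{\log n})}$ for the termination condition, then check that $\log^{3/2}n$ beats $\log n\cdot\log\log n$. The only cosmetic difference is that the paper lower-bounds the number of blocked cross-pairs by building an explicit bipartition $A\cup\bar A$ with $|A|\geq n/3$ and $|\bar A|\geq n/\sqrt{\log n}$, whereas you go straight to $\sum_{i<j}s_is_j=\tfrac12(n^2-\sum s_i^2)\geq n^2/(2\sqrt{\log n})$ via $\sum s_i^2\leq s_1 n$; both give the same order. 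Your flagged obstacle about disjointness of merge-arcs from termination arcs resolves immediately: merges only combine cycles, so every merge arc has both endpoints in the same terminal cycle and hence cannot coincide with any inter-cycle arc.
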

\begin{proof}
Assume that after applying the algorithm above we obtain $|C_1'| < n-\frac{n}{\sqrt{\log n}}$. Set
\\ $\alpha:=\max\big\{i \in [y]: \sum_{j=1}^{i} |C_j'| <  n-\frac{n}{\sqrt{\log n}}\big\}$
,  $A:=\underset{i \in [\alpha]}{\bigcup} C_i'$ (so $|A|< n-\frac{n}{\sqrt{n}}$) and $\bar{A}:=V \backslash A.$  
 As the sequence $|C_1'|, |C_2'|,..., |C_y'|$ is decreasing,  we have 
  $$n-\frac{n}{\sqrt{\log n}} \leq \sum_{j=1}^{\alpha+1} |C_j'| \leq 2\sum_{j=1}^{\alpha} |C_j'|.$$ Hence,  $|A|=\sum_{j=1}^{i} |C_j'| \geq  \frac{n}{2}-\frac{n}{2\sqrt{\log n}}\geq \frac{n}{3}$. On the other hand $\vert \bar{A} \vert =n-|A| \geq \frac{n}{\sqrt{\log n}}.$
 Since {\emph{Merge Cycles}} ends, after performing $1\leq k\leq z$ merges with cycles $C_1',...C_y'$, we have that there do not exist $1\leq i\leq \alpha <j\leq y$ and  $a \in V(C_i'),$ $b \in V(C_j')$ such that $ab, \phi_2^{-1}(b)\phi_2(a) \in E(D^2)\backslash E(H)$. So, for every $a \in A,$ $b\in \bar{A}$; either $ab\notin E(D^2)\backslash E(H)$ or $ \phi_2^{-1}(b)\phi_2(a) \notin E(D^2)\backslash E(H)$. 
 $A,\bar{A}$ define at least $n/\sqrt{\log n}\cdot n/3$ such pairs of arcs out of which at most $2|E(H)|$ have  at least one edge in $E(H)$.
 Thus the reason that {\emph{Merge Cycles}} terminates is that for each one of those,
at most $\frac{n}{\sqrt{\log n}}\cdot\frac{n}{3}-2|E(H)|$, 
pairs of arcs at least one does not belong to $E(D_2)$. This occurs with probability at most 
$ (1-(p')^2)^{\frac{n}{\sqrt{\log n}}\cdot\frac{n}{3}-2|E(H)|}  $ (recall $D^2 \sim D_{n,p'}$).    
\vspace{3mm}
\\{\emph{Merge Cycles}} performs some number $k\leq z:=\kappa \log n$ merges. Each such merge is uniquely determined by one of its arcs (i.e.\@ either $ab$ or $\phi_1^{-1}(b)\phi_1(a)$). Hence at every execution of the while loop of {\emph{Merge Cycles}} there are at most $n(n-1)$ possible merges available.
Therefore for $0\leq k \leq z$ there are most $[n(n-1)]^k$ sequences of $k$ merges that {\emph{Merge Cycles}} may perform.
Any of those sequences may take place only if the corresponding $2k$ arcs lie in 
$E(D^2)\setminus E(H)$, so any sequence occurs with probability at most $(p')^{2k}$. 
Thus, by considering the number of merges $k$, all the possible sequences of $k$ merges that {\emph{Merge Cycles}} may perform, the probability that a given sequence the related arcs lie in $E(D^2)$ and the probability of {\emph{Merge Cycles}} terminating due to lack of additional edges after performing this exact sequence of $k$ merges, we have
\begin{align*}
\mathbb{P}\bigg(  |C_1'| < n-\frac{n}{\sqrt{\log n}} \bigg) 
&= \sum_{k=0}^z \big[n(n-1)\big]^k(p')^{2k} (1-(p')^2)^{\frac{n}{\sqrt{\log n}}\cdot\frac{n}{3}-2|E(H)|}\\
&\leq \sum_{k=0}^{\kappa\log n} (\xi\log n)^{2k} \cdot 
\exp \left\{ -\frac{\xi^2\log^2 n}{n^2}\left[ \frac{n}{\sqrt{\log n}} \cdot\frac{n}{3}-2 n \Delta_H \right] \right\}\\
& \leq (\kappa\log n+1) \cdot (\xi \log n)^{2\kappa \log n} \cdot \exp \left(  -(1+o(1))\xi^2\log^{1.5}n  \right)=o(1).
\qedhere
\end{align*}
\end{proof}

\section{Proof of Lemma \ref{reduction} - PHASE 3}

 With high probability we inherit from \emph{Phase 2} a permutation $\phi_2$ consisting of $y$ cycles, $C_1',...,,C_y'$ such that $|C_1'| \geq |C_2'| \geq ... \geq |C_y'|$,  $|C_1'| \geq  n-\frac{n}{\sqrt{\log n}}$ and $y \leq \kappa \log n$. We also inherit the edges $E(\phi_2)$ associated with the permutation $\phi_2$. 
We will use the edges in $E(D^3)$, recalling $D^3 \sim D_{n,p'}$, in order to merge one by one all the cycles with $C_1'$. At iteration $i$ of {\emph{Phase 3}} we merge $C_i'$ with the cycle $C(i-1)$. $C(i-1)$ is the output of iteration $i-1$ of {\emph{Phase 3}} and it spans $C_1',...,C_{i-1}'$. The merging of $C_i'$ with $C(i-1)$ is performed by   {\emph{FindCycle$(C(i-1),C_i', \text{ outcome})$}}. 

To merge the two cycles we start by finding arcs in $E(D^3)\setminus E(H)$ from $C_i'$ to $C(i-1)$. For every such arc we create a di-path that 
spans $V(C_i')\cup V(C(i-1))$ and uses the edges of the two cycles in addition to the selected arc.
We let the set of those di-paths be $\mathcal{P}_0^i$-we will now use the P\'osa rotations to grow $\mathcal{P}_0^i$ exponentially.
Precisely, at iteration $t$ of  {\emph{FindCycle$(C(i-1),C_i', \text{ outcome})$}} we are given a set of di-paths that spans $V(C_i')\cup V(C(i-1))$ which we denote by 
$\mathcal{P}_{t-1}^i$. For every di-path $p_r\in\mathcal{P}_{t-1}^i$ we generate every possible di-path that can be obtained from $p_r$ by a single double rotation (i.e. a two arc exchange; see Figure 2/ Section 5) with the sole condition being that the two new arcs should belong to $E(D^3) \setminus E(H)$. The new di-paths generated at iteration $t$ are added to $\mathcal{P}_{t-1}^i$ to create $\mathcal{P}_{t}^i$. We grow this collection of paths $T=\frac{\log n}{\log \log n}$ times. By this point, there are so many di-paths in     
$\mathcal{P}_{T}^i$ that a \emph{constant proportion} of all vertices have become an endpoint, and so we have a good chance to close at least one into a cycle using another arc in $E(D^3) \setminus E(H)$.
\vspace{3mm}
\\ Once more, we proceed by implicitly conditioning on all aforementioned events that are proven to occur w.h.p. 
\begin{algorithm}[H]
\caption{Phase 3 }
$C(1)=C_1'$ \\
\For{ $i=2,3,...,y$}{
    outcome $ \leftarrow$ failure \\
    suppose $C_i'= (x_{i,1},x_{i,2},...,x_{i,n_i})$ \\
    Execute FindCycle$(C(i-1),C_i', \text{ outcome})$ \\
        \If{ outcome = failure  }{
    Terminate \emph{Phase 3} }
    }
\end{algorithm}
\begin{algorithm}[H]
\caption{FindCycle$(C(i-1),C_i', \text{ outcome})$ }
Suppose $C(i-1)=(y_1,y_2,...,y_\gamma)$ .\\
Set $\mathcal{P}_0^i:=\{(x_{i,1},x_{i,2},...,x_{i,n_i},y_{j},y_{{j}+1},...,y_\gamma,y_{1},...,y_{{j}-1}): j \in [\gamma] \text{ and } x_{i,n_i}y_j \in E(D^3) \setminus E(H)\}$. \\
    \For{$t=1,...,\big\lfloor \frac{\log n}{\log \log n} \big\rfloor$}{
     Suppose $\mathcal{P}_{t-1}^i=\{p_1,p_2,...,p_s\}$ ; \\
     $\mathcal{P}_{t}^i := \mathcal{P}_{t-1}^i$\\
        \For{$r=1,...,s$}{
        Suppose $p_r=(u_1,u_2,...,u_\ell)$ \\
        For all $(a,b)$ such that $a<b$ and $(u_\ell,u_a),(u_{a-1},u_b) \in  E(D^3)\setminus E(H)$ set: \\
        $\mathcal{P}_t^i \leftarrow \mathcal{P}_t^i \cup \{(u_1,u_2,...,u_{a-1},u_b,u_{b+1},...,u_{\ell},u_a,u_{a+1},...,u_{b-1} )\}$
    } }
 Suppose $\mathcal{P}_{\lfloor \frac{\log n}{\log \log n} \rfloor}^i=\{p_1,p_2,...,p_d\}$  \hspace{5mm}\\
    \For{$k=1,...,d$}{
        Suppose $p_k=(w_1,w_2,...,w_\zeta)$ \\
        \If{$(w_\zeta,w_1) \in E(D^3)\backslash E(H) $}{
        $C(i)=(w_1,w_2,...,w_\zeta,w_1)$ \\ outcome $ \leftarrow $ success\\
        Terminate FindCycle$(C(i-1),C_i', \text{ outcome})$ }
        }
\end{algorithm}
\vspace{5mm}
With $n_1=|C'_1|$ let $C_1'=(v_1,v_2,...,v_{n_1},v_1)$. Partition $C_1'$ into $\mu_1:= \lceil \log^2n/\log \log \log n\rceil$ intervals $A_1,A_2,...$ of size $\lceil |C_1'| / \mu_1\rceil$ or  $\lfloor |C_1'| / \mu_1\rfloor$, namely $A_i=\{v_{r_{i-1}+1},v_{r_{i-1}+2},...,v_{r_{i}}\}$ for some $0=r_0< r_1<r_2<...<r_{\mu_1}= n_1$. For $I\subseteq [\mu_1]$ let $A_I:= \underset{i\in I}{\cup} A_i$, $n_I:=|A_I|$ and $B_I:= \{v \in V(C_1'): |\{u \in A_I :(v,u) \in E(D^3)\setminus E(H) \}| \leq \xi \log n/20  \}$
be the set of all vertices with much fewer than the expected number of out-neighbours to the $I$-intervals in $D^{(3)} \backslash H$.
\begin{lem}\label{ph3,1}
 W.h.p for all  $I \subseteq [\mu_1]$ with $|I|= \lfloor \mu_1 /10 \rfloor $ we have that $|B_I| \leq {n}^{1-\frac{\xi}{100}}$.
\end{lem}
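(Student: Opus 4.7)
\noindent\textbf{Proof proposal for Lemma \ref{ph3,1}.}
The plan is a two-level Chernoff estimate: first bound the probability that a single vertex lies in $B_I$, then use that these events are independent across vertices in order to get a tail estimate on $|B_I|$ strong enough to survive a union bound over all $\binom{\mu_1}{\lfloor\mu_1/10\rfloor} \le 2^{\mu_1} = \exp\bigl(O(\log^2 n/\log\log\log n)\bigr)$ choices of $I$.

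Fix $I\subseteq[\mu_1]$ with $|I|=\lfloor\mu_1/10\rfloor$, so $n_I = (1+o(1))n/10$. For each $v\in V(C_1')$, let $X_v$ count the arcs $(v,u)$ with $u\in A_I\setminus\{v\}$ that lie in $E(D^3)\setminus E(H)$. Since the out-neighbourhood of $v$ in $H$ contains at most $\Delta_H=O(\log n)$ vertices of $A_I$, we can write $X_v$ as $\mathrm{Bin}(n_I - O(\log n),\, p')$, with mean $\mu := (1-o(1))\xi\log n/10$. The event $v\in B_I$ requires $X_v \le \xi\log n/20 \le \mu/2$ (for $n$ large), so the lower-tail Chernoff bound (inequality (3) in the paper) yields
\[
\Pr(v \in B_I) \le \exp\!\bigl(-\mu/8\bigr) \le n^{-\xi/81},
\]
say, for all sufficiently large $n$.

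Crucially, the events $\{v\in B_I\}$ for distinct $v\in V$ depend only on the out-arcs from $v$ in $D^3$, and these families of arcs are disjoint for different $v$. Hence $|B_I|$ is a sum of independent Bernoulli variables, with mean
\[
\mathbb{E}\,|B_I| \le n\cdot n^{-\xi/81} = n^{1-\xi/81}.
\]
Let $t = n^{1-\xi/100}$, so $t/\mathbb{E}|B_I| \ge n^{\xi/81 - \xi/100} = n^{\Omega(\xi)}$ is large. The Chernoff upper-tail bound (in the form $\Pr(X\ge t)\le \exp\bigl(-\tfrac{t}{3}\log(t/\mathbb{E}X)\bigr)$ when $t\gg \mathbb{E}X$) therefore gives
\[
\Pr\!\bigl(|B_I|\ge n^{1-\xi/100}\bigr) \le \exp\!\Bigl(-\Omega\bigl(n^{1-\xi/100}\log n\bigr)\Bigr).
\]

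The final step is a union bound over $I$. The number of choices of $I$ is at most $2^{\mu_1} = \exp\!\bigl(O(\log^2 n/\log\log\log n)\bigr)$, which is negligible compared to $\exp\bigl(n^{1-\xi/100}\log n\bigr)$. Thus the bad event occurs for some $I$ with probability $o(1)$, proving the lemma.

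The only potentially delicate point is the interaction between $H$ and $D^3$: since $|E(H)|$ can be as large as $O(n\log n)$ and $\Delta_H=O(\log n)$, we cannot simply discard $H$. What saves us is that at each vertex $v$ the $H$-arcs delete at most $\Delta_H = O(\log n)$ summands from the binomial, which is a lower-order perturbation of the mean $\xi\log n/10$ and does not affect the Chernoff estimate. Once this is absorbed, the argument is standard and the concentration across independent vertices comfortably dominates the combinatorial union bound.
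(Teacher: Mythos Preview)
Your proof is correct and follows essentially the same approach as the paper: a Chernoff bound on the individual events $\{v\in B_I\}$ (using that $H$ removes only $\Delta_H=O(\log n)$ candidate out-arcs), independence of these events across vertices, and a union bound over the at most $2^{\mu_1}$ choices of $I$. The only cosmetic difference is that the paper bounds $\Pr(|B_I|\ge k)$ via $\binom{n_1}{k}\Pr(v\in B_I)^k$ rather than applying a second Chernoff bound to the sum $|B_I|$, which yields the same estimate.
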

\begin{proof}
For a fixed such I we have $n_I=\sum_{l \in I}|A_l| \geq |I|\lfloor {|C_1'|}/ {\mu_1} \rfloor \geq \big(\frac{\mu_1}{10}-1\big)\big( \frac{|C_1|}{\mu_1}-1\big)$. Therefore
as $n_1=|C_1'|=\left(1- \frac{1}{\sqrt{\log n}}\right) n$ we get that $ n_I=(1+o(1)) 0.1 n $. Moreover, for any vertex $v \in V$ there are at most $\Delta_H=O(\log n)$ arcs in $E(H)$ from $v$ to $A_I$.  Hence, for fixed $k$:
\begin{align*}
\mathbb{P}\big(|B_I| \geq k\big) 
& \leq \binom{n_1}{k} \mathbb{P}\Bigg[Bin\bigg(n_I -\Delta_H,\frac{\xi\log n}{n} \bigg) \leq \frac{\xi \log n}{20} \Bigg]^k\\
& \leq \bigg(\frac{en}{k}\bigg)^k \Bigg[\exp\bigg(-(1+o(1))\frac{0.5^2}{2} \frac{\xi\log n}{10}\bigg)\Bigg]^k \\
& = \bigg(\frac{e}{k}n^{1-\frac{(1+o(1))\xi}{80}} \bigg)^k\leq \bigg(\frac{e}{k}n^{1-\frac{\xi}{90}} \bigg)^k.
\end{align*}
 At the 2nd inequality we used the Chernoff bounds (3).
Thus, with $k= n^{1-\frac{\xi}{100}}$ we have
\begin{align*}
\mathbb{P}\big( \exists I \subseteq [\mu_1] : |I|= \lfloor \mu_1 /10 \rfloor ;|B_I| 
\geq n^{1-\frac{\xi}{100}}\big)
& \leq \binom{\mu_1}{{\lfloor \mu_1 /10 \rfloor}} \bigg(\frac{e}{ n^{1-\frac{\xi}{100}}}
n^{1-\frac{\xi}{90}} \bigg)^{ n^{1-\frac{\xi}{100}}} \\
& \leq 2^{\mu_1}  \bigg(e n^{-\frac{\xi}{1000}} \bigg)^{ n^{1-\frac{\xi}{100}}} = o(n^{-1}).
\qedhere
\end{align*}
\end{proof}
Next, let $\mu_2:= \lceil \frac{\log n}{\log \log \log \log n} \rceil$.
\begin{lem}\label{ph3,2}
W.h.p.\@  for every $v\in 
V$ and every  $ I \subseteq [\mu_1]$ with $|I|= \lfloor \mu_1 /10 \rfloor $,
we have $|\{b\in B_I: v\phi_2( b) \in E(D^3)\}|<\mu_2.$
\end{lem}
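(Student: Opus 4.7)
The plan is to fix an arbitrary pair $(v,I)$ with $v\in V$ and $I\subseteq[\mu_1]$ of size $\lfloor \mu_1/10\rfloor$, bound the relevant tail probability for that pair, and then take a union bound over the $n\cdot\binom{\mu_1}{\lfloor \mu_1/10\rfloor}$ admissible pairs.

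The key structural observation will be that the randomness determining $B_I$ and the randomness determining the indicator events $\{v\phi_2(b)\in E(D^3)\}$ can be almost completely decoupled. Indeed, $\phi_2$ and the partition $A_1,\dots,A_{\mu_1}$ are produced in Phase~2 from $D^2$, which is independent of $D^3$; hence we may treat $\phi_2$, $V(C_1')$, and $A_I$ as fixed throughout. Moreover, whether $b\in B_I$ depends solely on the out-arcs of $b$ in $D^3$. So, after further conditioning on the restriction of $D^3$ to all arcs whose tail is \emph{not} $v$, the set $B_I\setminus\{v\}$ becomes deterministic, while the arcs $\{(v,\phi_2(b)):b\in B_I\setminus\{v\}\}$ remain independent $\mathrm{Bernoulli}(p')$ random variables (they are pairwise distinct since $\phi_2$ is a bijection, and in particular none of them coincides with an arc used to determine $B_I\setminus\{v\}$).

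Using Lemma~\ref{ph3,1} to impose $|B_I|\leq n^{1-\xi/100}$, the count $X:=|\{b\in B_I:v\phi_2(b)\in E(D^3)\}|$ is stochastically dominated by $1+\mathrm{Bin}(n^{1-\xi/100},p')$, where the extra $1$ absorbs the possible contribution of $b=v$. A standard binomial tail bound then gives
\[
\Pr[X\geq\mu_2]\leq\binom{n^{1-\xi/100}}{\mu_2-1}(p')^{\mu_2-1}\leq\biggl(\frac{e\xi\log n}{(\mu_2-1)\,n^{\xi/100}}\biggr)^{\mu_2-1}=n^{-\Omega(\mu_2)}.
\]
The scales $\mu_1,\mu_2$ are tuned precisely so that this per-pair bound absorbs the union bound: $\log\bigl(n\binom{\mu_1}{\lfloor\mu_1/10\rfloor}\bigr)=O(\mu_1)=O(\log^2 n/\log\log\log n)$, while $\mu_2\log n=\Theta(\log^2 n/\log\log\log\log n)$, which is asymptotically larger since $\log\log\log n > \log\log\log\log n$ for large $n$.

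The main obstacle is the decoupling step, namely isolating exactly which arcs of $D^3$ to condition on so that $B_I\setminus\{v\}$ and the events $\{v\phi_2(b)\in E(D^3)\}$ become jointly independent. Once this is in place, everything else is routine binomial tail estimation together with a clean union-bound calculation using the particular choices of $\mu_1$ and $\mu_2$.
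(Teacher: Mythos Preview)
Your proof is correct, but it follows a genuinely different route from the paper's own argument.

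The paper does \emph{not} invoke Lemma~\ref{ph3,1}. Instead it takes a single union bound over all triples $(v,I,B)$ with $B$ an arbitrary $\mu_2$-subset of $V$, and for each fixed triple bounds directly
\[
\Pr\Bigl[\{b_1,\dots,b_{\mu_2}\}\subseteq B_I \ \text{and}\ v\phi_2(b_i)\in E(D^3)\ \forall i\Bigr]
\ \le\ (p')^{\mu_2}\cdot\Bigl(\Pr\bigl[\mathrm{Bin}(n_I-\Delta_H-1,p')\le \tfrac{\xi\log n}{20}\bigr]\Bigr)^{\mu_2}
\ \le\ (p')^{\mu_2}\,n^{-\xi\mu_2/90}.
\]
The extra Chernoff factor $n^{-\xi\mu_2/90}$, one for each $b_i$, is precisely what kills the $\binom{n}{\mu_2}$ choices for $B$ in the union bound; the final exponent comparison $\mu_1=o(\mu_2\log n)$ is then the same as yours.

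Your approach replaces this Chernoff factor by a two-step argument: first use Lemma~\ref{ph3,1} as a black box to force $|B_I|\le n^{1-\xi/100}$, then condition on all arcs of $D^3$ with tail $\neq v$ so that $B_I\setminus\{v\}$ is frozen while the relevant arcs $(v,\phi_2(b))$ stay fresh. The effect is to trade $\binom{n}{\mu_2}$ for $\binom{n^{1-\xi/100}}{\mu_2}$, which yields the same $n^{-\Omega(\mu_2)}$ saving without re-running the Chernoff estimate inside the proof. This is more modular and your decoupling step is arguably cleaner than the paper's (somewhat implicit) independence claim; the paper's version, on the other hand, is self-contained and does not rely on the previous lemma. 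Both are valid, and the final union-bound arithmetic is identical.
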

\begin{proof}
 For fixed $v$, $I$, and $B=\{b_1,b_2,...,b_{\mu_2}\}$,  the probability that every $v\phi_2(b_i) \in E(D^3)$ and $B \subseteq B_I$  is bounded by
$$\bigg(\frac{\xi\log n}{n}\bigg)^{\mu_2}\cdot\mathbb{P}\Bigg[Bin \bigg(n_I-\Delta_H -\mathbb{I}(v\in A_I) , \frac{\xi}{\log n} \bigg) \leq \frac{\xi \log n}{20} \bigg]^{\mu_2}
\leq \bigg(\frac{\xi\log n}{n}\bigg)^{\mu_2}\cdot n^{- \frac{\xi\mu_2}{90}}.$$
Therefore,
\begin{align*}
\mathbb{P}(\exists v,I,B \text{ as above})
& \leq n 2^{\mu_1} \binom{n }{ {\mu_2}} \bigg(\frac{\xi\log n}{n}\bigg)^{\mu_2}\cdot n^{- \frac{\xi\mu_2}{90}}
 \leq n 2^{\mu_1} \bigg(\frac{en}{\mu_2}\bigg)^{\mu_2} \bigg(\frac{\xi\log n}{n}\bigg)^{\mu_2}\cdot n^{- \frac{\xi\mu_2}{90}}\\
& \leq  \exp \bigg\{\log n+ \mu_1 \log 2+\mu_2\log\bigg(\frac{e\xi\log n}{ \mu_2}\bigg) - \frac{\xi \mu_2}{90 } \log n \bigg\}\\
& \leq \exp \big\{\Theta(\mu_1-\mu_2\log n)\big\}=o(1).
\end{align*}
\end{proof}
\begin{lem}\label{ph3,3}
 Let $0 < \alpha <1 $ be fixed. Then w.h.p.\@ there do not exist $A,B\subseteq V(C_1')$ satisfying all 3 of the following:
 \begin{enumerate}[i)]
 \item $|A| \leq \alpha_0 =\alpha e^{-3} n /\log n$,
\item $|B| \leq \alpha |A| \log n /2$
\item $ |\{(u,v) \in E(D^3) : u \in A, v \in B \} | \geq \alpha |A| \log n$.
\end{enumerate}
\end{lem}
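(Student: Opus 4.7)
The plan is a first moment argument. For a fixed pair $(A,B)$ with $|A|=a$, $|B|=b$, the number of arcs from $A$ to $B$ in $D^3$ is stochastically dominated by $Bin(ab, p')$ with $p' = \xi\log n/n$ and $\xi \leq 1/2$ (by the definition of $p'$). By the elementary bound $\mathbb{P}[Bin(N,q) \geq k] \leq (eNq/k)^k$,
$$\mathbb{P}\big[\geq \alpha a \log n \text{ such arcs}\big] \leq \left(\frac{eb\xi}{\alpha n}\right)^{\alpha a \log n}.$$
A union bound over the $\binom{n}{a}\binom{n}{b}$ choices of $(A,B)$ then gives
$$\mathbb{E}\big[\#\text{pairs satisfying (i)--(iii)}\big] \leq \sum_{a\leq \alpha_0}\sum_{b\leq \alpha a\log n/2}\binom{n}{a}\binom{n}{b}\left(\frac{eb\xi}{\alpha n}\right)^{\alpha a\log n}.$$

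For fixed $a$, the summand is monotone increasing in $b$ (the derivative of its logarithm is $\log(n/b)+\alpha a\log n/b>0$), so the maximum over $b\leq \alpha a\log n/2$ occurs at $b=\alpha a\log n/2$. Evaluating there with $\binom{n}{b} \leq (en/b)^b$ and $\binom{n}{a}\leq(en/a)^a$, the logarithm $T(a)$ of the summand admits a direct expansion in terms of $u:=n/(a\log n)$. The key cancellations come from condition (i), which guarantees $u\geq e^3/\alpha$, and from $\xi\leq 1/2$. A short calculation shows the dominant term of $T(a)$ is $-\tfrac{\alpha a\log n}{2}\log u$, from which $T$ is monotone decreasing on $[2,\alpha_0]$; hence it is maximized at $a=2$, where $\log u\approx \log n$ and $T(2)\leq -\alpha\log^2 n\,(1+o(1))$.

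Consequently each summand is at most $n^{-\alpha\log n\,(1+o(1))}$. With $O(n^2)$ pairs $(a,b)$ in the sum, the expectation is $o(1)$, and the lemma follows by Markov's inequality.

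The main technical obstacle is the careful bookkeeping in the exponent: the three large factors $\binom{n}{a}$, $\binom{n}{b}$ and the Chernoff $p$-term each contribute quantities of order $a\log^2 n$ in their logarithms that almost cancel. The constant $e^{-3}$ in the definition of $\alpha_0$ is precisely tuned so that after cancellation, the leading term of $T$ is strictly negative for every fixed $\alpha<1$ (given $\xi\leq 1/2$).
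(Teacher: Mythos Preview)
Your proof is correct and follows essentially the same route as the paper: a first-moment/union-bound argument combined with a binomial tail bound on the number of $A\to B$ arcs in $D^3$. The paper enumerates the edge subsets explicitly via $\binom{|A||B|}{\alpha|A|\log n}p'^{\alpha|A|\log n}$ after padding $B$ to size exactly $\alpha|A|\log n/2$, whereas you use the equivalent Chernoff form $(eNp'/k)^k$ and argue the summand is increasing in $b$; these produce the same expression $(en/k)\big(\frac{e^3\xi^2 k\log n}{2\alpha n}\big)^{\alpha\log n/2}$ raised to the $k$th power. Your explicit optimization showing the maximum over $a$ occurs at $a=2$ (with $T(2)\sim -\alpha\log^2 n$) is correct --- one can check $T'(a)=\log(n/a)+\tfrac{\alpha\log n}{2}[\log(c a\log n/n)+1]$ has positive second derivative and is negative at the right endpoint $a=\alpha_0$ (since there $c\alpha_0\log n/n = \xi/2\le 1/4$), hence $T$ is decreasing throughout; the paper simply asserts the final sum is $o(1)$ without isolating the worst term.
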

\begin{proof}
Observe that if
there exist sets $A, B $ satisfying conditions i-iii  we may extend $B$, by  adding to it any vertices of $V(C_1')$,  to a set $B'$ of size $\alpha |A| \log n/2 $ such that the sets $A, B' $ also satisfy conditions i-iii.
Hence, if we let $\mathcal{F}$ be the event  that 
there exist sets $A, B $ satisfying conditions i-iii, then as $|V(C_1')| \leq n,$
\begin{align*}
\mathbb{P}(\mathcal{F} )
& \leq \sum_{k=1}^{\alpha_0}
\sum_{\substack{A,B\subseteq V(C_1'):\\|A|=k, |B|=\alpha k \log n/2}}\sum_{\substack{E\subseteq A\times B:\\ |E|=\alpha k \log n}}
\bigg(\frac{\xi\log n}{n} \bigg)^{\alpha k \log n}\\
&\leq \sum_{k=1}^{\alpha_0}\binom{n}{k}
\binom{n}{ {\alpha k \log n /2}} \binom{k \cdot {\alpha k \log n /2} }{{\alpha k \log n }}\cdot \bigg(\frac{\xi\log n}{n} \bigg)^{\alpha k \log n}\\
&\leq \sum_{k=1}^{\alpha_0}\Bigg\{\frac{en}{k} \Bigg[\frac{2en}{\alpha k \log n }\bigg(\frac{ek}{2}\bigg)^2\bigg( \frac{\xi \log n}{n}\bigg)^2\Bigg]^{\alpha  \log n /2}\Bigg\}^k  \\
&\leq \sum_{k=1}^{\alpha_0}\Bigg[\frac{en}{k} \bigg(\frac{k e^3 \xi \log n}{2\alpha n}\bigg)^{\alpha  \log n /2}\Bigg]^k=o(1).
\end{align*}
At the last line we used that $\xi\leq \frac{1}{2}$ and that $k\leq \alpha e^{-3} n /\log n$.
\end{proof}
We say that iteration $i$ of \emph{Phase 3} is a success if \emph{FindCycle$(C(i-1),C_i',outcome)$} merges $C(i-1)$ with $C_i'$.
To show that \emph{Phase 3} is successful  it is enough to show that for $i \in [y]$, conditioned on iteration $i-1$ of the algorithm being a success (i.e.\@ \emph{Findcycle} defines $C(i-1)$), iteration $i$ is not a success with probability   $o(\frac{1}{\log n})$ (there are $O(\log n)$ cycles to be merged). Henceforth we implicitly condition on the statements of the previous three Lemmas.
\vspace{3mm}
\\
The following three definitions will be of high significance for the rest of this section.
\begin{defn}
For $I \subseteq [\mu_1]$ set $cl(A_I):= \{ e\in E(C_1'): |e\cap V(A_I)| \geq 1 \}$,
the edges of the large cycle corresponding to the collection of intervals $I$ (together with their boundaries).
\end{defn}
\begin{defn}
We say that a path $P=(v_1,v_2,...,v_p)$ is \emph{good} if $\exists I \subseteq [\mu_1]$ with $\vert I \vert = \lfloor \mu_1 / 10 \rfloor$ and $r<s \leq \frac{p}{2}$ such that $s-r\leq \frac{p}{9}$,  $cl(A_I) \subseteq \{ v_{j}v_{j+1}: r\leq j < s\}$ and $v_p \notin B_I$ (recall $v_p \notin B_I$ if there are more than $\frac{\xi\log n}{20}$ arcs in $E(D^3)\setminus E(H)$ from $v_p$ to $A_I$).
\end{defn}
\begin{defn}
For a subgraph $S
\subseteq C(i-1),
$
set $ J_S :=\Big(\bigcup\limits_{k=2}^i V(C_k')\Big) \cup \Big(\underset{\ell \in F_{S}}{\bigcup}A_\ell\Big)$ for $F_S:=\{\ell \in [\mu_1]: cl(A_{\ell}) \not\subseteq E(S) \}$.
This $J_S$ should be considered as a set of junk: we want to restrict ourselves to only trying more rotations using the intervals $\ell \in [\mu_1]$ preserved from the original large cycle $C_1'$ which are still wholly contained in $S$ (i.e.\ were not broken by a previous rotation). Certainly therefore we want to avoid any vertices leftover from the smaller cycles $C_k'$ that have previously been merged.
\end{defn}
\begin{lem}
Suppose $S$ is a good path that satisfies $S\in  {\mathcal{P}}_t^i$ for some $0\leq t \leq \frac{\log n}{\log \log n}.$
Then $|J_S| = o(n)$.
\end{lem}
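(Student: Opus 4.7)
The plan is to split
\[
|J_S| \le \Big|\bigcup_{k=2}^i V(C_k')\Big| + \Big|\bigcup_{\ell\in F_S}A_\ell\Big|
\]
and bound each term to be $o(n)$ separately. The first term is handled immediately by Lemma \ref{bigcycle}: since $|C_1'|\ge n-n/\sqrt{\log n}$, the small cycles $C_2',\dots,C_y'$ collectively span at most $n/\sqrt{\log n}=o(n)$ vertices.

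The substance of the argument is bounding $|F_S|$, i.e.\ the number of intervals $A_\ell$ whose closure $cl(A_\ell)$ has \emph{lost an edge} by the time we arrive at $S$. My plan is to track how many original edges of $C_1'$ have been dropped over the course of \emph{Phase 3} up to the creation of $S$. Starting from $C(1)=C_1'$, each subsequent iteration $j\in\{2,\dots,i-1\}$ of \emph{Phase 3} transforms $C(j-1)$ into $C(j)$ by: cutting exactly one edge of $C(j-1)$ to splice in $C_j'$, executing at most $T=\lfloor\log n/\log\log n\rfloor$ double rotations (each a two-arc exchange that removes $2$ edges and adds $2$ new arcs from $E(D^3)\setminus E(H)$), and then closing the resulting path with one additional arc. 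So each completed iteration removes at most $2+2T$ edges from $C_1'$. By the same token, $S\in\mathcal{P}_t^i$ differs from $C(i-1)$ by at most $1+2T$ further edge removals (one splice plus $\le T$ rotations). Telescoping, the total number of original $C_1'$-edges absent from $E(S)$ is at most $(i-1)(2+2T)=O(\log^2 n/\log\log n)$ using $i\le y\le\kappa\log n$.

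Each such missing edge of $C_1'$ has its two endpoints lying in at most two consecutive intervals $A_\ell$, so it contributes at most two indices to $F_S$. Therefore
\[
|F_S|\le 2(i-1)(2+2T)=O\!\left(\frac{\log^2 n}{\log\log n}\right).
\]
Since each $|A_\ell|\le\lceil n_1/\mu_1\rceil=O(n\log\log\log n/\log^2 n)$ by the definition of $\mu_1=\lceil\log^2 n/\log\log\log n\rceil$, we obtain
\[
\Big|\bigcup_{\ell\in F_S}A_\ell\Big|\le |F_S|\cdot\max_\ell|A_\ell|=O\!\left(\frac{n\log\log\log n}{\log\log n}\right)=o(n).
\]
Combined with the $o(n)$ bound on the small cycles, this gives $|J_S|=o(n)$.

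The only real obstacle is the edge-removal bookkeeping in the middle paragraph: one must be careful that a rotation is a two-arc exchange (not an $O(\log n)$-arc operation), that the spliced edge and the closing edge each only cost one edge each, and that chaining through all previous \emph{Phase 3} iterations remains polylogarithmic. Once that is set up, the numerics are comfortable because $\mu_1$ is chosen to be much larger than the polylogarithmic bound on $|F_S|$, so the union $\bigcup_{\ell\in F_S}A_\ell$ cannot cover a positive fraction of $V$.
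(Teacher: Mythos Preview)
Your proof is correct and follows essentially the same approach as the paper: split $J_S$ into the small cycles (bounded via $|C_1'|\ge n-n/\sqrt{\log n}$) and the broken intervals, then bound $|F_S|$ by twice the number of $C_1'$-edges deleted across all iterations of \emph{Phase 3}, which is $O(\log n)\cdot O(\log n/\log\log n)=o(\mu_1)$. The only cosmetic differences are that you count $2+2T$ deletions per iteration rather than the paper's $1+2T$, and you carry out the final product $|F_S|\cdot\max_\ell|A_\ell|$ explicitly instead of writing it as $o(\mu_1)\cdot(n/\mu_1+1)$; neither affects the argument.
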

\begin{proof}
 To merge $C(i-1)$ with $C_i'$, we start by joining the two cycles using an edge in $E(D^3)\setminus E(H)$, then delete an edge from each cycle to create a path.
Thereafter, in order to create a new path from a given one, we perform double rotations (defined in section Finding Hamilton cycles - Overview). Every
double rotation
involves removing two edges from the current path and adding two edges from $E(D^3)\setminus E(H)$.
As \emph{FindCycle$(\cdot)$} performs $\leq \frac{\log n}{\log \log n }$ rounds of double rotations,
$\vert E(C(i-1)) \backslash E(S) \vert\leq 1+ 2\cdot\frac{\log n}{\log \log n } $.
Similarly, $\vert E(C(k-1)) \backslash E(C(k)) \vert\leq 1+ 2\cdot\frac{\log n}{\log \log n } $ for every $2 \leq k <i$. Thus, as $i \leq \log n$, we have
$$
\vert F_S \vert
\leq 2\vert E(C_1')\backslash E(S)\vert = 2\vert E(C(1))\backslash E(S) \vert  \leq 4\log n \cdot\bigg(1+ 2\cdot\frac{\log n}{\log \log n } \bigg) = o(\mu_1).$$
(At the first inequality, we used that each removed $e \in E(C_1')$ was in $\leq 2$ of the $cl(A_\ell)$'s).
Therefore,
\begin{align*} 
|J_S| &\leq \sum_{k=2}^{i}|V(C_k')| +\sum_{\ell \in F_S} \vert  A_\ell \vert
\leq o(n)+ o(\mu_1) \cdot( n/\mu_1+1) =o(n).\qedhere
\end{align*}
\end{proof}
\begin{defn}
Let $i \in [y]$ and $x \in V(C_i')$. For $t \leq   \frac{ \log n}{\log \log n}$ we define  $\mathcal{GP}_{t}^i$ to be the set of all good paths that are contained in $\mathcal{P}_t^i$. Furthermore let $ENDG_{t}^i$ be the set of endpoints of paths in $\mathcal{GP}_t^i$.
\end{defn}
\begin{lem}\label{startpath}
 For $i \in [y]$, conditioned on iteration $i-1$ being a success, $ \mathbb{P}(\mathcal{GP}_{t}^i \neq \emptyset) \geq 1-o(n^{-\frac{\xi}{2}}) $.
\end{lem}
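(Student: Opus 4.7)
The plan is to prove the stronger statement that $\mathcal{GP}_0^i \neq \emptyset$ with probability $\geq 1-o(n^{-\xi/2})$; this suffices because $\mathcal{P}_t^i$ only grows with $t$ and goodness is a property of the individual path. I implicitly condition on $C(i-1)$ (defined by the success of iteration $i-1$) and on the high-probability events of the preceding lemmas (in particular, Lemmas \ref{ph3,1} and \ref{ph3,2}, and the estimate $|F_{C(i-1)}| = O(\log^2 n/\log\log n) = o(\mu_1)$). Recall $\mathcal{P}_0^i = \{p_j : j \in J\}$ where $J := \{j \in [\gamma] : x_{i,n_i} y_j \in E(D^3) \setminus E(H)\}$ and $p_j = (x_{i,1},\ldots,x_{i,n_i}, y_j, y_{j+1}, \ldots, y_{j-1})$. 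The strategy is first to exhibit a large deterministic set $J_{\mathrm{good}} \subseteq [\gamma]$ for which $p_j$ is good, then to show $J \cap J_{\mathrm{good}} \neq \emptyset$ with the required probability.

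For the deterministic construction I choose three witnesses $I_0, I_1, I_2 \subseteq [\mu_1]$, each consisting of $\lfloor\mu_1/10\rfloor$ intact intervals of $C(i-1)$ (which exist in abundance since $(1-o(1))\mu_1$ intervals are intact), whose starting positions in $C(i-1)$ are spaced roughly $\gamma/3$ apart. For each $k$, the vertices of $A_{I_k}$ together with the junk between them in $C(i-1)$ -- broken intervals of total size $|F_{C(i-1)}|\cdot O(n/\mu_1) = o(n)$ and vertices of $C_2',\ldots,C_{i-1}'$ of total size $\leq n/\sqrt{\log n}$ -- occupy a cyclic arc of length $(1+o(1))n/10 < p/9$. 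Consequently, the window requirement of goodness is met for $p_j$ with $I = I_k$ whenever $(Q_k - j)\bmod \gamma$ lies in a certain arc of length $\approx 2\gamma/5$, and three such arcs spaced $\gamma/3$ apart cover all of $[\gamma]$. Define
\[
J_{\mathrm{good}} := \bigl\{j \in [\gamma] : y_{j-1} \notin B_{I_0} \cup B_{I_1} \cup B_{I_2}\bigr\}.
\]
Since $j \mapsto y_{j-1}$ is a bijection and Lemma \ref{ph3,1} gives $|B_{I_k}| \leq n^{1-\xi/100}$, we obtain $|J_{\mathrm{good}}| \geq \gamma - 3 n^{1-\xi/100} = (1-o(1))n$. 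For each $j \in J_{\mathrm{good}}$, the matching witness $I_k$ supplied by the coverage argument makes $p_j$ good.

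Finally, bound $\P(J \cap J_{\mathrm{good}} = \emptyset)$. The crucial independence observation is that each $B_{I_k} \subseteq V(C_1')$ is determined by out-edges of vertices of $V(C_1')$ into $A_{I_k}$, whereas the indicators $\{\mathbb{I}(x_{i,n_i}y_j \in E(D^3))\}_j$ are determined by out-edges of $x_{i,n_i} \in V(C_i')$; these families are disjoint. Excluding the $O(\log n)$ values of $j$ with $x_{i,n_i}y_j \in E(H)$,
\[
\P\bigl(J \cap J_{\mathrm{good}} = \emptyset\bigr) \leq (1-p')^{|J_{\mathrm{good}}|-\Delta_H-1} \leq \exp\!\left(-(1-o(1))\xi\log n\right) = n^{-(1-o(1))\xi} = o\!\left(n^{-\xi/2}\right),
\]
where the last equality uses $(1-o(1))\xi > \xi/2$ for large $n$. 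The main obstacle in this plan is the coverage verification in the second paragraph: one must track how the $o(n)$ total junk is distributed within $C(i-1)$ to confirm that each $A_{I_k}$ genuinely fits into a window of size at most $p/9$ contained in the first half of $p_j$, and that three appropriately spaced rotations indeed cover every $j$. All other ingredients -- the bound on $|J_{\mathrm{good}}|$ via Lemma \ref{ph3,1}, and the final independent-edge probability calculation -- are routine once this geometric step is in place.
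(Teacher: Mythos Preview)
Your proposal is correct and follows essentially the same line as the paper's proof: both show $\mathcal{GP}_0^i\neq\emptyset$ by (i) fixing a small family of index sets $I$ that can serve as goodness witnesses, (ii) using Lemma~\ref{ph3,1} to bound the excluded set $\bigcup B_I$, and (iii) exploiting the freshness of the arcs from $x_{i,n_i}$ in $D^3$ to close. The only substantive difference is the packaging of step (i): the paper partitions $C(i-1)$ into nine equal blocks $S_1,\dots,S_9$, takes $I_\ell$ inside each, \emph{requires the landing vertex $u_a$ itself to lie in some $A_{I_\ell}$}, and then observes that one of the nine blocks automatically sits in the interior of the first half of the resulting path; you instead pick three witness sets spaced $\gamma/3$ apart and run the cyclic coverage argument you describe. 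Both work --- your arc-length computation $\gamma/2 - W_k \approx 2\gamma/5 > \gamma/3$ is what makes three suffice --- and the paper's choice of nine is driven by its reuse in Lemma~\ref{last} (where the subdivision into $S_{1,P},\dots,S_{9,P}$ reappears) rather than by any necessity here.

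One small omission worth flagging: your $J_{\mathrm{good}}$ as written may contain indices $j$ with $y_{j-1}\notin V(C_1')$, for which the condition $y_{j-1}\notin B_{I_k}$ is vacuous (since $B_I\subseteq V(C_1')$ by definition) and the resulting path is ``good'' only in a formal sense that is useless for Lemma~\ref{last}. The paper sidesteps this by insisting $u_a\in A_{I_\ell}$, which forces the predecessor $u_{a-1}$ to lie in $V(C_1')$ via $cl(A_{I_\ell})\subseteq E(C(i-1))$. In your framework you should simply intersect $J_{\mathrm{good}}$ with $\{j: y_{j-1}\in V(C_1')\}$; this discards at most $n/\sqrt{\log n}$ indices and leaves your final probability estimate untouched.
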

\begin{proof}
 Let $C(i-1)=\{u_1,u_2,...,u_\gamma,u_1\}$. Partition $C(i-1)$ into 9 blocks/subpaths $S_1,S_2,...,S_9$ of near-equal length by setting, for each $\ell\in [9]$, $S_\ell := \{u_{\lfloor\frac{\ell-1}{9}\cdot \gamma \rfloor+1},..., u_{\lfloor\frac{\ell}{9}\cdot \gamma\rfloor}\}$.
Note every $|J_{S_\ell} \cap S_\ell| \leq \vert J_{C(i-1)}\vert +2 = o(n)$, so
\begin{equation}\label{eq:ax}
 \sum_{\substack{i \in [\mu_1]
\\ cl(A_i) \subseteq E(S_\ell)}}
 \left\lvert A_i \right\rvert
 =| S_\ell \backslash J_{S_\ell} |=|S_\ell|
-o(n)\geq \bigg|\frac{C_1'}{9}\bigg|-1-o(n)=\big(1-o(1)\big)\frac{n}{9}.
\end{equation}
For every $\ell \in [9]$, let ${I_\ell}'=\{ i \in  [\mu_1]: 
cl(A_i) \subseteq E(S_\ell)\}$. (\ref{eq:ax}) implies that $|I_\ell'| \geq \mu_1/10$. Thus we may let $I_\ell\subseteq I_\ell'$ be the set of the  
$\lfloor \mu_1 / 10 \rfloor$ smallest elements of $I_\ell'$.

Recall the notation $C_i'= \{x_{i,1}x_{i,2},...,x_{i,n_i},x_{i,1 } \}$.
$\mathcal{GP}_0^i$   in non-empty if there exists 
an arc $(x_{i,n_i},u_a) \in E(D^3)\setminus E(H)$ for some $a \in [\gamma]$
such that
\begin{enumerate}[(i)]
\item $u_a\in A_{I_\ell}$ for some $\ell \in [9]$, and
\item $\phi^{-1}_2(u_a) \notin B_{I_{1}} \cup B_{I_{2}} \cup ... \cup B_{I_{9}}$.
\end{enumerate}
Indeed let $P=\{x_{i,1},...,x_{i,n_i}, u_a,u_{a+1},...,u_\gamma,u_1,...,$ $u_{a-1}\} $ be such a path. Observe  that $\exists j \in [9]$ such that $S_j$ defined above is found in the interior of the first half of $P$ (here we only needed that $C(i-1)$ was split into at least 5 blocks). In addition $S_j$ 
consists of $\frac{n}{9}-o(n)$ consecutive vertices in $C(i-1)$ hence in $P$.
Thus since $I_j\subseteq I_j'\subsetneq S_j$, $I:=I_j$ is a witness to the goodness of path $P$. Furthermore 
$u_a\in A_{I_\ell}$ implies that $ (\phi^{-1}_2(u_a),u_a) \in E(C(i-1))$ and therefore $ \phi^{-1}_2(u_a)=u_{a-1}$. Finally since the endpoint of $P$,
$u_{a-1}=\phi^{-1}_2(u_a)\notin B_{I_{1}} \cup B_{I_{2}} \cup ... \cup B_{I_{9}}$ we have that all the conditions for $P$ to be good are met.

Lemma \ref{ph3,1} implies that the number of vertices $u_a$ satisfying both conditions (i) and (ii) is $(1+o(1))0.9n$. Since we do not examine the arcs in $\{x_{i,n_1}\}\times V(C_1')$ that are found in $E(D^3)$ until  we execute the $i$-th iteration of \emph{Phase 3},  we have that  any arc
in $\{x_{i,n_1}\}\times V \big(\underset{i\in [\ell]}{\cup} A_{I_\ell}\big)$ not found in $E(H)$ belongs to $E(D^3)$ with probability $p'=\frac{\xi \log n}{n}$. Pause for a moment to recall that every vertex has at most $\Delta_H=O(\log n)$ out-arcs in $E(H)$ that we cannot use. Thus, given that iteration $i-1$ is a success,
 the probability of the event $\{\mathcal{GP}_0^i = \emptyset\}$ is bounded above by
\begin{align*}
\mathbb{P}\bigg\{Bin\big[((1+o(1))0.9n -\Delta_H,p'\big]=0\bigg\}
&\leq (1-p')^{(1+o(1))0.9n}
\leq e^{-(1+o(1))0.9p'n } = o(n^{-\frac{\xi}{2}}).
\qedhere
\end{align*}
\end{proof}

We will use the endpoints of good paths in order to lower bound the number of distinct endpoints of paths created at some iteration of \emph{Phase 3}. The advantage of good paths is that their endpoints have many arcs towards earlier vertices of the path, whose predecessors in turn have many arcs to vertices nearer the end of the path. Hence, we expect the number of paths originating from a specific good path after an iteration of \emph{Phase 3} to be large. Note that for any $i \in [y]$ all the paths that are constructed during \emph{FindCycle$(C(i-1),C_i',outcome)$} have the same starting point, namely $x_{i,1}$.

\begin{lem}\label{last}
Let $i \in [y]$  be such that  $\mathcal{GP}_{t}^i \neq \emptyset$. Then, w.h.p.\@ for $t \leq   \frac{ \log n}{\log \log n}-1,$
$$ |ENDG_{t}^i| \leq \frac{\xi n}{84 e^3 \log^2 n} \hspace{5mm} \text{ implies }  \hspace{5mm}  \bigg(\frac{\xi \log n}{42}\bigg)^2|ENDG_{t}^i|\leq|ENDG_{t+1}^i|.$$
\end{lem}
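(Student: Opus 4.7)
The plan is to amplify $|ENDG_t^i|$ by the claimed factor $(\xi\log n/42)^2$ through a two-stage argument, one stage per arc of a double rotation, invoking Lemma \ref{ph3,3} in each stage.

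\emph{Stage 1 (first arc).} For each $v \in A := ENDG_t^i$, I will fix a good path $P(v) = (v_1, \dots, v_p)$ with $v_p = v$ and interval witness $I_v$. Since $v \notin B_{I_v}$ by goodness, $v$ has at least $\xi \log n/20$ out-arcs in $E(D^3) \setminus E(H)$ to $A_{I_v}$. Aggregating over $v \in A$ yields at least $|A|\xi \log n/20$ such arcs with tails in $A$. Since $|A| \leq \xi n/(84e^3 \log^2 n) \leq \alpha_0$ with $\alpha = \xi/20$, Lemma \ref{ph3,3} (contrapositively) forces the set $B$ of distinct heads to satisfy $|B| > |A|\xi \log n/40$. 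Goodness of $P(v)$ gives $cl(A_{I_v}) \subseteq E(\phi_2)$, so the in-path predecessor of each $v_a \in B$ in its canonical path is exactly $\phi_2^{-1}(v_a)$. Consequently $P_1 := \phi_2^{-1}(B)$ satisfies $|P_1| = |B|$ by bijectivity of $\phi_2$.

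\emph{Stage 2 (second arc).} Each $u \in P_1$ sits in a canonical path with back half of size $\geq p/2 - o(p) \geq n/2 - o(n)$. The expected number of arcs from $u$ to the back half in $E(D^3) \setminus E(H)$ is approximately $\xi \log n/2$, minus the $O(\log n)$ arcs in $E(H)$ and the $o(p)$ positions where earlier rotations have destroyed the $\phi_2$-predecessor relation. Lemma \ref{ph3,2}, applied to $u$ and the $I'$ to be chosen for the new path, discards a further $\mu_2 = o(\log n)$ arcs landing at ``bad'' heads $v_b$ with $\phi_2^{-1}(v_b) \in B_{I'}$. Each $u$ therefore contributes at least $\xi \log n/22$ good second arcs. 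Reapplying Lemma \ref{ph3,3} with $\alpha' = \xi/22$ (which keeps $|P_1|$ within $\alpha_0$), the set $B_2$ of distinct good heads satisfies $|B_2| > |P_1|\xi \log n/44$. Each $v_b \in B_2$ produces a distinct new endpoint $v_{b-1} = \phi_2^{-1}(v_b)$, and we conclude
\[
|ENDG_{t+1}^i| \geq |B_2| > \frac{|A|\,\xi^2 \log^2 n}{1760} > |A|\left(\frac{\xi \log n}{42}\right)^2.
\]

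\emph{Certifying goodness of the new paths.} Each new path requires a fresh interval witness $I'$ with $v_{b-1} \notin B_{I'}$. Since at most $2t \leq 2T = o(\mu_1)$ original intervals $A_\ell$ have been disturbed by rotations so far, ample intact intervals remain in the preserved initial block $(v_1, \dots, v_{a-1})$ of the new path, which itself lies within the new first half (since $a \leq s \leq p/2$). I select $I'$ to be $\lfloor \mu_1/10 \rfloor$ consecutive such intervals; their total span $\sim n/10 \leq p/9$ fits the window-length constraint. The endpoint condition $v_{b-1} \notin B_{I'}$ is then exactly what Lemma \ref{ph3,2} delivers when applied to this $(u, I')$.

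The main obstacle is the coordination in this final step: one must select $I'$ for each new path in a manner consistent with the Stage 2 arc count, then carefully verify that Lemma \ref{ph3,2} excludes the problematic $v_b$'s without eroding the $\xi \log n/22$ lower bound. The two applications of Lemma \ref{ph3,3} in Stages 1 and 2 are then straightforward bookkeeping.
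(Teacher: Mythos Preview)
Your two-stage outline matches the paper's approach---one application of Lemma~\ref{ph3,3} per arc of a double rotation---but there is a genuine gap in Stage~2.

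You write that for each $u\in P_1$, ``the expected number of arcs from $u$ to the back half in $E(D^3)\setminus E(H)$ is approximately $\xi\log n/2$''. An expected value is not enough: Lemma~\ref{ph3,3} requires a deterministic lower bound of $\alpha|A|\log n$ arcs from $A$ to $B$, which means you need every $u\in P_1$ to individually have $\Omega(\log n)$ suitable out-arcs. The whole point of the sets $B_I$ and of the goodness definition is precisely to convert random estimates into per-vertex deterministic guarantees, and you have not arranged for $u\notin B_{I'}$ for any particular $I'$.

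The fix---and this is how the paper proceeds---is to commit to the Stage~2 target interval \emph{before} Stage~1. One partitions each good path $P$ into nine blocks carrying interval sets $I_{1,P},\dots,I_{9,P}$, and in Stage~1 one keeps only those first arcs $u_p\to u_j$ with $u_j\in A_{I_P}$ \emph{and} $u_{j-1}\notin B_{I_{9,P}}$. Lemma~\ref{ph3,2} says this filtering costs at most $\mu_2=o(\log n)$ arcs per endpoint, leaving $\geq \xi\log n/20-\mu_2\geq \xi\log n/21$. Now every $u\in P_1$ automatically satisfies $u\notin B_{I_{9,P}}$, so deterministically has $\geq \xi\log n/20$ arcs into $A_{I_{9,P}}$, and Stage~2 goes through (with a second filtering, again via Lemma~\ref{ph3,2}, to ensure the new endpoint avoids $B_{I'}$ for the goodness witness $I'\in\{I_{1,P},I_{3,P}\}$). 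Your final paragraph flags this coordination as ``the main obstacle'' and then defers it, but it is the missing step, not bookkeeping: with $I'$ chosen only at the end, there is no way to guarantee the Stage~2 arc count.

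One smaller point: for the second application of Lemma~\ref{ph3,3} you need $|P_1|\leq \alpha_0$. This does not follow from the hypothesis on $|ENDG_t^i|$ alone; the paper uses the maximum-degree bound $\Delta(D^3)\leq 4\log n$ (Lemma~\ref{lem5}) to get $|\mathcal{H}_1|\leq 4\log n\cdot |ENDG_t^i|\leq \xi n/(21e^3\log n)$.
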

\begin{proof}
For $t \leq   \frac{\log n}{\log \log n}-1$ let $P= (u_1,u_2,...,u_p) \in \mathcal{GP}_t^i$ and $r_P,s_P,I_P$ be as in the definition of a good path.
Partition $P$ into 9 sub-paths $S_{1,P},S_{2,P},...,S_{9,P}$ containing $A_{I_{1,P}},A_{I_{2,P}},...A_{I_{9,P}}$ as is done earlier in Lemma \ref{startpath}. Set
$$ H_1(P)= \{ u_j \in P: u_pu_j \in E(D^3)\setminus E(H), u_j \in A_{I_P} \text{ and } u_{j-1} \notin B_{I_{9,P}}   \} $$
and
$$H_2(P)= \{ u_{j-1}: u_j \in H_1(P)\}.$$
Since $P$ is a good path we have that $u_p \notin B_{I_P}$. Therefore $u_p$ has at least $\frac{\xi\log n}{20}$ neighbours in $A_{I_P}$ out of which at most $\mu_2$ have their predecessor in $B_{I_{9,P}}$ (see Lemma \ref{ph3,2}). Hence  we have that
\begin{align}
\vert H_2(P) \vert =\vert H_1(P) \vert  &\geq \frac{\xi\log n}{20} - \mu_2  \geq \frac{\xi\log n}{21}.
\end{align}
Furthermore, if $r_P <\frac{p}{9}+1$ for each  $u \in H_2(P)$ set,
$$ H_3(P,u)= \{ u_\ell \in P: uu_\ell \in  E(D^3)\setminus E(H), u_\ell \in A_{I_{9,P}} \text{ and } u_{\ell-1} \notin B_{I_{3,P}}   \} .$$
Otherwise, set
$$ H_3(P,u)= \{ u_\ell \in P: uu_\ell \in  E(D^3)\setminus E(H), u_\ell \in A_{I_{9,P}} \text{ and } u_{\ell-1} \notin B_{I_{1,P}}   \} .$$
Finally in both of the above cases set
$$H_4(P,u)= \{ u_{\ell-1}: u_\ell \in H_3(P,u)\}.$$
As before, from  $H_2(P) \cap  B_{I_{9,P}}= \emptyset$ together with Lemma \ref{ph3,2} we have that, for all $u \in H_2(P)$,
\begin{align}
\vert H_4(P,u) \vert =\vert H_3(P,u) \vert  &\geq \frac{\xi \log n}{20} - \mu_2  \geq \frac{\xi \log n}{21}.
\end{align}
\\ Finally for $k\in\{1,2\}$ and $m\in \{3,4\}$ set,
$$\mathcal{H}_k:= \underset{P \in \mathcal{GP}_{t}^i}{\bigcup}H_k(P) \hspace{20mm}
\mathcal{H}_m:= \underset{P \in \mathcal{GP}_{t}^i}{\bigcup}\bigg\{\underset{v\in H_2(P)}{\bigcup}H_m(P,v)\bigg\}.$$
\vspace{5mm}
\\\emph{Claim:} $\mathcal{H}_4 \subseteq ENDG_{t+1}^i$.
\vspace{5mm}
\\\emph{Proof of the claim:} Indeed, suppose that $r_P<\frac{p}{9}+1$ and $u_{k-1} \in \mathcal{H}_4,$ i.e.\@ there are $j$ and $k$ such that
$$ u_pu_j,  u_{j-1}u_k \in F_c^3, \hspace{5mm} u_j \in A_{I_P},\hspace{5mm} u_k \in A_{I_{9,P}},\hspace{5mm} u_{j-1} \notin B_{I_{9,P}}  \text{ and } u_{k-1} \notin B_{I_{3,P}}.$$
Then, $r_P\leq j\leq s_P\leq \frac{p}{2}\leq k$ and hence a double rotation on $P$ using the edges  $u_pu_j,  u_{j-1}u_k$ will result in
the path $P'=(u_1,u_2,..u_{j-1}u_k,u_{k+1},...,u_p,u_j,u_{j+1},...,u_{k-1})$.
So in showing that $u_{k-1} \in ENDG^i_{t+1}$ it suffices to show that $P'$ is a good path with $I_{P'}=I_{3,P}$. To see this first note  $u_{k-1}\notin B_{ I_{3,P} }$.
Secondly  $cl(A_{I_{3,P}}) \subseteq P'$ as $cl(A_{I_{3,P}}) \subseteq P$ and no edge of $cl(A_{I_{3,P}})$ was deleted in a double rotation.
Thirdly if we let $r',s'$ to be respectively the smallest and largest indices of vertices in $A_{I_{3,P}} (=A_{I_{P'}})$ in the path $P$ then $(s'+1)-(r'-1)\leq \frac{\vert P' \vert}{9}(=\frac{p}{9})$ as $cl(A_{I_{3,P}})\subseteq E( S_{3,P})$.
This implies that $ cl(A_{I_{P'}})
\subseteq \{ u_ju_{j+1}: (r'-1)+(p-k+1)\leq j < (s'+1)+(p-k+1)\}$
and that $[(s'+1)-(p-k+1)]-[(r'-1)-(p-k+1)] \leq \frac{p}{9}$. Finally as $u_k \in A_{I_{9,P}}$ and $u_{s'} \in  A_{I_{3,P}}$, we get that $p-k\leq \frac{p}{9}$ and $(s'+1) \leq \frac{p}{3}$. Hence $(s'+1)+(p-k+1)<\frac{p}{2}.$

In the case that
$r_P>\frac{p}{9}$ and $u_{k-1} \in \mathcal{H}_4,$ the goodness of $p'$ (now with $I_{P'}=I_{1,P}$) follows from the same reasoning with the only difference that the vertices in $A_{I_{P'}}$ hold the same positions in both paths. Thus in both cases $P'$ is good, proving the claim.
\vspace{5mm}
\\Suppose that $ |ENDG_{t}^i| \leq \frac{\xi n}{84e^3\log^2 n}$. To make sure that the endpoints of good paths in $\mathcal{GP}_{t+1}^i$ do not coincide too often  we apply Lemma \ref{ph3,3} with $\alpha=\frac{\xi}{21}, A=ENDG_{t}^i ,B=\mathcal{H}_1$. Recall for every good path there are at least $\frac{\xi\log n}{21}$ edges in $E(D^3)\backslash E(H)$ from its endpoint that lie in 
$A$ to vertices in $B=\mathcal{H}_1$. So by summing over a maximal set of paths with distinct endpoints we get that there are at least $\frac{\xi}{21}|A|\log n$ arcs from $A$ to $B$. Hence as
$\vert A \vert  \leq \frac{\xi n}{84e^3\log^2n}\leq \alpha e^{-3} n /\log n $ in the Lemma \ref{ph3,3} condition ii) must not be satisfied. Moreover Lemma \ref{lem5} implies that w.h.p.\@ there are at most $\Delta(D^3)\vert A \vert \leq 4\log n |A|$ arcs from $A$ to $B$. Therefore,
$$ \frac{\xi \log n}{42} |ENDG_{t}^i| \leq |\mathcal{H}_1|=|\mathcal{H}_2| \leq 4 \log n |ENDG_{t}^i| \leq \frac{\xi n}{21e^3\log n}.$$
Similarly by reapplying Lemma \ref{ph3,3} with  $\alpha=\frac{\xi}{21}, A=\mathcal{H}_2,B=\mathcal{H}_3$  we have that,
\begin{align*}
\bigg(\frac{\xi \log n}{42}\bigg)^2|ENDG_{t}^i| \leq \frac{\xi \log n}{42}  |\mathcal{H}_2|&\leq |\mathcal{H}_3|=|\mathcal{H}_4|   \leq |ENDG_{t+1}^i|.
\qedhere
\end{align*}
\end{proof}
Summarising, the two last lemmas give us that conditioned on phase $i-1$ being a success,  $1 \leq \vert ENDG_{0}^i \vert$ with probability at least $1-o(n^{-\frac{\xi}{2}})$.
 Furthermore since $n\leq \big(\frac{\xi \log n}{42}\big)^{ \frac{1.8 \log n}{\log \log n}} $ 
 the integer $t_f:=\min \big\{j:\big(\frac{\xi \log n}{42}\big)^{2j}\geq \frac{\xi n}{84 e^3 \log^2n}  \big\} $ is less than $\frac{0.9 \log n}{\log \log n}$
 and satisfies, due to Lemma \ref{last}, $|ENDG_{t_f}^i| \geq \frac{\xi n}{84 e^3 \log^2n}$. Thus by applying the same argument as in the previous lemma to a subset $F$ of  $ENDG_{t_f}^i$ of size $\frac{\xi n}{84 e^3 \log^2n}$ and to the set of paths in $\mathcal{GP}_{t_f}^i$ with endpoints in $F$ we have that
  $$\beta n=\bigg(\frac{\xi \log n}{42}\bigg)^2 \cdot \frac{\xi n}{84 e^3 \log^2n} \leq |ENDG_{t_f+1}^i(v)|$$ for some constant $\beta >0$.
  Recall that all the paths in
 $\mathcal{GP}_{t_f+1}^i$ start from the same vertex $x_{1,i} \in V(C_i')$ and
 that $\mathcal{GP}_{t_f+1}^i \subseteq \mathcal{P}^i_{\lfloor{\frac{\log n}{\log \log n}}\rfloor}$.
Since we do not examine the arcs going into $x_{i,1}$ until the very end of the $i$-th iteration of \emph{Phase 3}, \emph{after} conditioning on iteration $i-1$ of \emph{Phase 3} being a success every arc in $V(C_1')\times \{x_{i,1}\} \backslash E(H)$ still belongs to $E(D^3)$ with probability $p'$
.
Hence, the probability of iteration $i$ of \emph{Phase 3} not being a success conditioned on iteration $i-1$
 is bounded by
 $$o\big(n^{-\frac{\xi}{2}}\big)+\mathbb{P}\big[Bin(\beta n-\Delta_H,p')=0\big]\leq o\big(n^{-\frac{\xi}{2}}\big)+(1-p')^{\beta n-O(\log n)}=o(n^{-\epsilon}),$$
 for some $\epsilon >0$. As we merge cycles at most 
 $y \leq \kappa \log n$ times, \emph{Phase 3} succeeds in merging all the cycles into one with probability $1-o(n^{-\epsilon} \cdot \kappa \log n)=1-o(1)$. Finally observe that during phases 2 and 3 we use edges only in $(E(D^2) \cup E(D^3))\backslash E(H)$ which completes the proof of Lemma \ref{reduction}.

\begin{appendices}
\section{Proof of Theorem \ref{secondtm}}

Theorem \ref{secondtm} can be proven in an almost identical fashion to  Theorem \ref{maintm}. As the proof of  Theorem \ref{maintm} is somewhat lengthy with many technicalities  we are only going to present a sketch of the proof of Theorem \ref{secondtm} where we highlight substantial differences. 
\subsection{Some notation}
Write $\tau'$ for $\tau_{2q}'$, that is, the hitting time $\tau'$ for when $G_{\tau'}$ first has minimum degree $2q$.
Recall that $\tau' \in [m_\ell, m_u]$ w.h.p.\@, where now instead $m_u, m_\ell: = n \log n + (2q-1)n \log \log n \pm \omega(n)$, which as before underpins the same computations in this setting.
\begin{notn} For $u,v\in V_n$
we say the we orient the edge $uv$ $+u$ or equivalently $-v$ if we orient it from $u$ to $v$.  
\end{notn}
\begin{defn}
For $v \in V_n$, $c \in [q]$ and $t \in \{0,1,...,n(n-1)/2\}$ we define the quantities $d^+_t(v,c),$  $d^-_t(v,c)$, $d^+_t(v)$,  $d^-_t(v)$,  $d^+(v)$, $d^-(v)$ and the sets  $C^{+}_v(t)$, $C^{-}_v(t)$ as in the subsection \ref{notation}.
\end{defn}
We are now interested in assigning every (color,direction)-pair to the edges adjacent to a vertex.  
\begin{defn}
For $t \in \{0,1,...,\tau' \}$ we set $FULL_t:=\{v \in V_n: C^{+}_v(t)\cup C^{-}_v(t) \neq \emptyset \}$ (i.e.  the set of vertices that at time $t$ have out-degree and in-degree in each color at least one). 
\end{defn}

As before, since some vertices will only have degree $2q$ in the whole graph $G_{\tau'}$, when a new edge $e_t$ appears we need to prioritize any vertices that aren't yet in $FULL_{t-1}$:

\begin{algorithm}
\caption{ColorGreedy2($u,v,t$)}
\eIf{ $u \notin FULL_{t-1}$ or $v \notin FULL_{t-1}$}{
Orient and color the edge $uv$ by an orientation and a color that is chosen uniformly at random from 
$\{ (x,c)\in \{+1,-1\} \times [q]: \mathbb{I}\big(d^{sign(x)}_{t-1}(u,c)=0\big)+\mathbb{I}\big( d^{sign(-x)}_{t-1}(v,c)=0\big)\geq 1 \}$.
}{ 
Orient $uv$ uniformly at random, \\
color  $uv$ with a  color that is chosen uniformly at random from  [q].
}
\end{algorithm}
For $i \in \{0,1,2,3\}$ we still take $m_i=i \cdot e^{-q\cdot 10^{4}} n \log n$.
\begin{algorithm}[H]
\caption{COL-ORIENT}
\For{ $t=1,...,m_1$}{
let $e_t=uv$ \\
Execute ColorGreedy2($u,v,t$). }
For $v\in V_n$ set $c^+(v)=1, c^-(v)=1$.\\
\For{ $t=m_1+1,...,m_2$}{
let $e_t=uv$ \\
\eIf{ $ u\notin FULL_{t-1} \text{ or } v \notin FULL_{t-1}$}{
Execute ColorGreedy2($u,v,t$). }
{ Choose $w\in \{u,v\}$ uniformly at random; orient $e_t$ +$w$. \\
Color  the arc $e_t$ by the color $c$ that satisfies $c\equiv c^+(w) \mod q$,\\
$c^+(w) \leftarrow c^+(w)+1$.}
}
\For{$t=m_2+1,m_2+2,...,m_3$}{
let $e_t=uv$ \\
\eIf{ $ u\notin FULL_{t-1} \text{ or } v \notin FULL_{t-1}$}{
Execute ColorGreedy2($u,v,t$). }
{ Choose $w\in \{u,v\}$ uniformly at random; orient $e_t$ -$w$. \\
Color  the arc $e_t$ by the color $c$ that satisfies $c\equiv c^-(w) \mod q$,\\
$c^-(w) \leftarrow c^-(w)+1$.}
}
For $i \in \{1,2,3\}, * \in \{+,-\}$ set $B^*_i:=\{ v \in V_n: d^*_{m_i}(v)-d^*_{m_{i-1}} \leq e^{-q \cdot 10^{6}}, \log n\}$.
\\Furthermore, set $BAD:=B_1^+ \cup B_1^- \cup B_2^+ \cup B_3^-$ and $E'=\emptyset$.\\
\For{ $t=m_3+1,...,\tau'$}{
let $e_t=uv$ \\
    \uIf{ $ u\notin FULL_{t-1} \text{ or } v \notin FULL_{t-1}$}{
Execute ColorGreedy2($u,v,t$). }
    \uElseIf{ $u \in BAD$ or $v \in BAD$}{
    Choose uniformly at random $(x,c)\in \{+1,-1\} \times [q]$ from those that minimize the expression  $d^{sign(x)}_t(u,c)\mathbb{I}(u \in BAD)+  d^{sign(-x)}_t(v,c)\mathbb{I}(v \in BAD)$.\\   
    Color the edge $uv$ by color $c$ and   orient it $sign(x)u$. }
    \Else{ Execute ColorGreedy2($u,v,t$). \\
    Add $uv$ to $E'$.}
    }
\end{algorithm}
\begin{rem}
As in algorithm $COL$, if for some $e_t=uv$, $u\notin FULL_{t-1}$ or $v\notin FULL_{t-1}$ then any $(x,c)\in \{+1,-1\} \times [q]$ that satisfies $d^{sign(x)}_{t-1}(u,c)=0$ or  $d^{sign(x)}_{t-1}(v,c)=0$ may be chosen in the assignment of orientation and color to $uv$ with probability at least $\frac{1}{2q}$.
\end{rem}
It is easy to see that the Lemmas in Section 3 have an undirected version which can be proven in the same way. 
On the other hand in order to prove that $COL$-$ORIENT$  assigns directions and colors to the edges such that $\forall v\in V_n$ and $\forall c\in[q]$ $d_{\tau'}^+(v,c),  d_{\tau'}^-(v,c)\geq 1$  we  make a small additional calculation.   Indeed, recall that the first step in the proof of Theorem \ref{tm2} was to define the set $A_L^+(v)$ of arcs to $N_L^+(v)$ which a particular vertex $v$ needs to provide an out-arc in each color. To help this $v$ get priority for enough of the colors assigned to $A_L^+(v)$, each $w \in N_L^+(v)$ had a set $B_v^-(w)$ of $\Omega(\log n)$ arcs supporting this $w$'s own in-edge-coloring needs.
Most significantly, any pair of these supporting arc-sets 
$B_v^-(w)$ and $B_v^-(w')$ were disjoint since an edge can't have both $w$ and $w'$ as in-vertices.
Disconcertingly, in the current now undirected case there could still be triangles in $G_{\tau'}$. Nevertheless, the existence of the corresponding sets (defined below) will result from the following lemma. 
\begin{lem}\label{alt}
W.h.p.\@ $G_{\tau'}$ does not contain a cycle of length 4 with a chord. Hence for $v\in V_n$, if $N(v)$ denotes the neighbours of $v$ in $G_{\tau'}$, at time $\tau'$, every $w\in N(v)$  has at most one neighbour in $N(v)$. 
\end{lem}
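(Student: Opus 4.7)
The plan is a routine first-moment argument, followed by a short structural observation. First I will note that a $4$-cycle with a chord is a graph on $4$ vertices with $5$ edges, so containment of such a subgraph is monotone increasing under edge addition. By the undirected analogue of inequality~(2) together with $\tau'\leq m_u$ w.h.p.\@ (where $p_u=(\log n+(2q-1)\log\log n+\omega(n))/n=\Theta(\log n/n)$ in this setting), it suffices to bound $\mathbb{P}\bigl(G_{n,p_u}\text{ contains a $C_4$ with a chord}\bigr)$. A union bound over the $\binom{n}{4}$ vertex $4$-subsets and the $\binom{6}{5}=6$ ways of selecting which $5$ of the $\binom{4}{2}=6$ possible edges are present yields
\[
\mathbb{P}\bigl(G_{n,p_u}\supseteq C_4\text{ with a chord}\bigr)\;\leq\;\binom{n}{4}\cdot 6\cdot p_u^{\,5}\;=\;O\!\left(\tfrac{(\log n)^5}{n}\right)\;=\;o(1),
\]
which, after transferring back to $G_{\tau'}$ via the monotone bound above, establishes the first sentence of the lemma.

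For the ``hence'' clause I will argue contrapositively. Suppose some $v\in V_n$ has a neighbour $w\in N(v)$ possessing two \emph{distinct} neighbours $w_1,w_2\in N(v)\setminus\{w\}$. Since $v\notin N(v)$ and $w\notin N(w)$, the four vertices $v,w,w_1,w_2$ are pairwise distinct, and the five edges $vw$, $vw_1$, $vw_2$, $ww_1$, $ww_2$ are all present; these form the $4$-cycle $v\text{--}w_1\text{--}w\text{--}w_2\text{--}v$ together with the chord $vw$. This would contradict the first part, so w.h.p.\@ no such $w$ exists for any $v$.

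No genuine obstacle arises: once one observes $p_u=\Theta(\log n/n)$, the bound $n^4 p_u^{5}=\Theta((\log n)^5/n)=o(1)$ is immediate, and the structural deduction is a one-line combinatorial check. The only ingredient to confirm is the undirected analogue of the transference inequality~(2) for $G_{n,m}$ versus $G_{n,p}$, which is completely standard (and used implicitly elsewhere in the paper).
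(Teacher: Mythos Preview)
Your proof is correct and follows essentially the same approach as the paper's: a first-moment bound in $G_{n,p_u}$ via the monotone transference inequality~(2) and $\tau'\le m_u$ w.h.p., using that a $C_4$ with a chord has $5$ edges on $4$ vertices so the expected count is $O(n^4 p_u^5)=o(1)$. Your counting constant $\binom{6}{5}=6$ is in fact sharper than the paper's $4!$, and you additionally spell out the ``hence'' deduction, which the paper leaves implicit.
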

\begin{proof}
Using the undirected version of (2) with $p_u$ defined by $p_u\binom{n}{2} = m_u$, together with Markov's inequality and the fact that almost surely $\tau'\leq m_u$, we get that the probability that such a subgraph exists is bounded by
\begin{align*}
& \binom{n}{4} 4! p_u^{5}\leq  24n^4 \bigg(\frac{2\log n}{n} \bigg)^{5} =o(1).
\qedhere
\end{align*}
\end{proof}
As before, fix $v\in V_n$. Denote by $N_L(v)$ the neighbours of $v$ in $G_{\tau'}$ with more than $\frac{\log n}{100}$ neighbours in $G_{\tau'}$ and $A_L(v)$ the set of edges arising from $N_L(v)$ (i.e. $A_L(v):=\{vw \in E_{\tau'}:w \in N_L(v) \}$). 
\\
For each $w \in N_L(v)$, Lemma \ref{alt} shows us at most one edge was from $w$ to another vertex in $N(v)$, so it follows $\vert \big\{ wx\in E_{\tau'} :x\notin N(v)\cup\{v\} \big\} \vert \geq \frac{\log n}{100}-2$.
This means we can define $B_{v}(w)$
to be any subset of size $\frac{\log n}{100} -2$ of these arcs.
 Finally we let $A_v(w):= B_v(w) \cup \{vw\}$, so that
$|A_v(w)|=\frac{\log n}{100} - 1$.
Thereafter we can couple the coloring/orientation process of two graphs $G_{\tau'}^{(1)}$, $G_{\tau'}^{(2)}$ in parallel (as per Section 4, $G_{\tau'}^{(2)}$ has the same distribution as $COL$-$ORIENT$, but $G_{\tau'}^{(1)}$ has more randomness among the edges distance 1 away from $v$).
Specifically, in the spirit of \emph{$COL1(v)$},
we define the graphs $G_{\tau'}^{(1)}, G_{\tau'}^{(2)}$ via the algorithm $COL$-$ORIENT1(v)$ given below:
 
\begin{algorithm}[H]
\caption{COL-ORIENT1(v)}
\For{ $t=1,...,\tau'$}{
    let $e_t=ab$ \\
   \eIf{ $e_t \in \underset{w \in N_L^+(v)}{\bigcup}B_v(w) $}{
   Choose $(x,c)$ from $\{+1,-1\}\times [q]$ uniformly at random; \\
        \eIf{ $c \in C^{sign(x)}_{2,a}(t-1) \cup C^{sign(-x)}_{2,b}(t-1)$ }{ in both $G_{\tau'}^{(1)}$, $G_{{\tau'}}^{(2)}$ color $e_t$ with color $c$ and orient it $sign(x) a$.}
        { Color $e_t$ in  $G_{{\tau'}}^{(1)}$  with color $c$ and orient it $sign(x)a$,\\
          to assign and orient $e_t$ in $G_{{\tau'}}^{(2)}$ execute step $t$ of COL-ORIENT. 
          }
     }
     {To color and orient $e_t$ in $G_{{\tau'}}^{(2)}$ execute step $t$ of COL-ORIENT,\\
      assign to $e_t$ in $G_{{\tau'}}^{(1)}$ the same color and direction as in $G_{{\tau'}}^{(2)}$.
}}
\end{algorithm}
Note that the sufficient conditions corresponding to those needed while proving Lemmas \ref{lem12} and \ref{lem13} are met (see Remark \ref{sss}). Specifically, the $\{A_v(w)\}$ are disjoint, each $A_v(w)$ has size $\Omega\big({\log n}\big)$, and for every $w\in N_L(v)$ every edge in $B_v(w)$ is colored and oriented by $COL$-$ORIENT1(v)$ independently and uniformly at random in $G_{\tau'}^{(1)}$.  Hence we can proceed analogously to Section 4.
\vspace{3mm}
\\For the part of the proof corresponding to Section 5 we can define $BAD$ as the vertices that are adjacent to few edges in one of the desired directions in $E_{m_1}, E_{m_2}\backslash E_{m_1},$ or $E_{m_3}\backslash E_{m_2}$. Then by repeating the calculations, with the undirected random graph process in place of the directed random graph process, we can bound the size of $BAD$ by
$n^{1-\delta}$ for some constant $\delta=\delta(q)>0$. As such, we can proceed and ``hide'' $BAD$ with a similar algorithm in order to form $G_c$. The calculations found in section 8 (corresponding to \emph{Phase 1}),
and the reduction to Lemma \ref{reduction}, are all the same as before.
\end{appendices}

\end{document}